\providecommand{\tabularnewline}{\\}
\theoremstyle{plain}
\newtheorem{thm}{\protect\theoremname}[section]
  \theoremstyle{definition}
  \newtheorem{defn}[thm]{\protect\definitionname}
  \theoremstyle{definition}
  \newtheorem{problem}[thm]{\protect\problemname}
  \theoremstyle{plain}
  \newtheorem{prop}[thm]{\protect\propositionname}
  \theoremstyle{plain}
  \newtheorem{cor}[thm]{\protect\corollaryname}
  \theoremstyle{plain}
  \newtheorem{lem}[thm]{\protect\lemmaname}
  \theoremstyle{remark}
  \newtheorem{rem}[thm]{\protect\remarkname}
  \theoremstyle{definition}
  \newtheorem{example}[thm]{\protect\examplename}
\theoremstyle{definition}
\newtheorem*{defcomplete}{Definition \ref{def:complete sets}}
\newtheorem*{defproper}{Definition \ref{def:proper sets}}
\theoremstyle{plain}
\newtheorem*{thmformulaaq}{Theorem \ref{thm:formulaaq}}
\newtheorem*{thmformulaq}{Theorem \ref{thm:formulaq}}
  \providecommand{\corollaryname}{Corollary}
  \providecommand{\definitionname}{Definition}
  \providecommand{\examplename}{Example}
  \providecommand{\lemmaname}{Lemma}
  \providecommand{\problemname}{Problem}
  \providecommand{\propositionname}{Proposition}
  \providecommand{\remarkname}{Remark}
\providecommand{\theoremname}{Theorem}
\begin{document}
\global\long\def\R{\mathbb{R}}

\global\long\def\N{\mathbb{N}}

\global\long\def\C{\mathbb{C}}

\global\long\def\Cl{\mathcal{C}}

\global\long\def\tr{\mathrm{tr}}

\global\long\def\Se{\mathcal{S}}

\global\long\def\P{L_{\beta_{1},\beta_{2}}^{\infty}(\Omega)}

\global\long\def\V{H_{0}^{1}(\Omega;\C)}

\global\long\def\Vp{H^{-1}(\Omega;\C)}

\global\long\def\Vr{H_{0}^{1}(\Omega)}

\global\long\def\Honer{H^{1}(\Omega)}

\global\long\def\Hone{H^{1}(\Omega;\C)}

\global\long\def\Hhalf{H^{1/2}(\partial\Omega;\C)}

\global\long\def\Hhalfr{H^{1/2}(\partial\Omega)}

\global\long\def\phi{\varphi}

\global\long\def\epsilon{\varepsilon}

\global\long\def\div{{\rm div}}

\global\long\def\ld{L^{2}(\Omega;\C)}

\global\long\def\ldr{L^{2}(\Omega)}

\global\long\def\linf{L^{\infty}(\Omega;\C)}

\global\long\def\linfr{L^{\infty}(\Omega)}

\global\long\def\Conealp{\mathcal{C}^{1,\alpha}(\overline{\Omega};\C)}

\global\long\def\Conealpr{\mathcal{C}^{1,\alpha}(\overline{\Omega})}

\global\long\def\Calpr{\mathcal{C}^{0,\alpha}(\overline{\Omega};\R^{d\times d})}

\global\long\def\Calprsca{\mathcal{C}^{0,\alpha}(\overline{\Omega})}

\global\long\def\Czeroonescal{\mathcal{C}^{0,1}(\overline{\Omega})}

\global\long\def\Calpvect{\mathcal{C}^{0,\alpha}(\overline{\Omega};\R^{d})}

\global\long\def\Cone{\Cl^{1}(\overline{\Omega};\C)}

\global\long\def\Coner{\Cl^{1}(\overline{\Omega})}

\global\long\def\Kad{K_{ad}}

\global\long\def\mina{\lambda}

\global\long\def\maxa{\Lambda}

\global\long\def\minq{\beta_{1}}

\global\long\def\maxq{\beta_{2}}

\global\long\def\Czeroone{\Cl^{0,1}(\overline{\Omega};\R^{d\times d})}

\global\long\def\BMN{B\! M\! N(\Omega)}

\title[On Multiple Frequency Power Density Measurements]{On Multiple Frequency Power Density Measurements}

\author{Giovanni S. Alberti}

\address{Mathematical Institute, University of Oxford, Andrew Wiles Building,
Radcliffe Observatory Quarter, Woodstock Road, Oxford OX2 6GG, UK.}

\email{giovanni.alberti@maths.ox.ac.uk}

\date{September 24, 2013}

\subjclass[2000]{35R30, 35J57, 35B30, 35B38}

\keywords{hybrid imaging, sets of measurements, power density measurements,
critical points, multiple frequency, microwave imaging, ultrasound
deformation, exact reconstruction formulae}
\begin{abstract}
We shall give a priori conditions on the illuminations $\phi_{i}$
such that the solutions to the Helmholtz equation
\[
\left\{ \begin{array}{l}
-\div(a\,\nabla u^{i})-k\, q\, u^{i}=0\qquad\text{in \ensuremath{\Omega},}\\
u^{i}=\phi_{i}\qquad\text{on \ensuremath{\partial\Omega},}
\end{array}\right.
\]
and their gradients satisfy certain non-zero and linear independence
properties inside the domain $\Omega$, provided that a finite number
of frequencies $k$ are chosen in a fixed range. These conditions
are independent of the coefficients, in contrast to the illuminations
classically constructed by means of complex geometric optics solutions.
This theory finds applications in several hybrid problems, where unknown
parameters have to be imaged from internal power density measurements.
As an example, we discuss the microwave imaging by ultrasound deformation
technique, for which we prove new reconstruction formulae.
\end{abstract}
\maketitle

\section{Introduction}

Let $d=2$ or $d=3$ be the dimension of the ambient space, $\Omega\subseteq\R^{d}$
be a smooth bounded domain and consider the Helmholtz equation 
\begin{equation}
\left\{ \begin{array}{l}
-\div(a\,\nabla u_{k}^{\phi})-k\, q\, u_{k}^{\phi}=0\qquad\text{in \ensuremath{\Omega},}\\
u_{k}^{\phi}=\phi\qquad\text{on \ensuremath{\partial\Omega},}
\end{array}\right.\label{eq:physical_modeling}
\end{equation}
where $a$ is a real and symmetric tensor satisfying the ellipticity
condition and $q>0$. Let $K_{\text{min}}<K_{\text{max}}$ be the
bounds for the possible values of $k$ and set $\Kad=[K_{\text{min}},K_{\text{max}}]$.
Problem (\ref{eq:physical_modeling}) is well-posed provided that
$k\notin\Sigma,$ the set of the Dirichlet eigenvalues.

We want to find suitable illuminations $\phi_{i}$ such that the corresponding
solutions to (\ref{eq:physical_modeling}) and their gradients satisfy
certain non-zero properties inside the domain, e.g.
\begin{equation}
\left|u_{k}^{\phi_{1}}(x)\right|>0,\qquad\left|\nabla u_{k}^{\phi_{2}}(x)\right|>0,\label{eq:intro_1}
\end{equation}
and certain linear independence properties, e.g.
\begin{equation}
\det\begin{bmatrix}\nabla u_{k}^{\phi_{2}} & \cdots & \nabla u_{k}^{\phi_{d+1}}\end{bmatrix}(x)>0,\qquad\det\begin{bmatrix}u_{k}^{\phi_{1}} & \cdots & u_{k}^{\phi_{d+1}}\\
\nabla u_{k}^{\phi_{1}} & \cdots & \nabla u_{k}^{\phi_{d+1}}
\end{bmatrix}(x)>0.\label{eq:intro_2}
\end{equation}
As discussed more precisely below, these conditions are motivated
by the reconstruction algorithms of several hybrid imaging techniques.
The problem is usually set for a fixed frequency $k\in\Kad\setminus\Sigma$.

The classical way to tackle this problem is by means of the so called
complex geometric optics solutions \cite{sylv1987,bal2011quantitative}.
These are particular high oscillatory solutions of the Helmholtz equation
and can be used to determine suitable illuminations \cite{bal2012inversediffusion,bal2012_review}.
However, they can only be constructed when the parameters are sufficiently
smooth. Furthermore, and most importantly, their construction depends
on the coefficients $a$ and $q$. Thus, this cannot be considered
a completely satisfactory answer to this issue: in inverse problems
these are usually unknown. In the case $k=0$, the absence of critical
points is studied in \cite{alessandrini1986,alessandrinimagnanini1994}.

The main contribution of this work is a very different approach to
this problem by means of a multi-frequency approach. We shall give
a priori, i.e. independent of $a$ and $q$,  conditions on the illuminations
whose corresponding solutions satisfy the required properties, provided
a finite number of frequencies are used in the range $\Kad$. More
precisely, there exist illuminations $\phi_{i}$ and a finite number
of frequencies $K\subseteq\Kad\setminus\Sigma$ such that (\ref{eq:intro_1})
and (\ref{eq:intro_2}) are satisfied for every $x\in\Omega$ and
for some $k\in K$ depending on $x$. For example, we shall show that
if $\Omega$ is convex then the choice $\{1,x_{1},x_{2}\}$ is sufficient
in two dimensions, and the same is true for $\{1,x_{1},x_{2},x_{3}\}$
in three dimensions, provided that $a$ is close to a constant. The
main idea behind this method is simple: if the illuminations are suitably
chosen then the zero level sets of functionals depending on $u_{k}^{\phi}$
and $\nabla u_{k}^{\phi}$ \emph{move} when the frequency changes.
The proof is based on the analyticity of the map $k\mapsto u_{k}^{\phi}$.
Note that an infinite number of frequencies and analyticity properties
have been used in similar contexts \cite{acosta2012multi,bao2010multi},
but here only a finite number of $k$ is needed. Simplified versions
of the main results can be found in Subsection \ref{sub:On-the-critical}.

The theory presented in this paper finds its applications in several
hybrid imaging techniques. By combining measurements coming from two
different modalities it is possible to obtain high-resolution and
high-contrast images. For a review of the state of the art in hybrid
techniques, the reader is referred to the recent works \cite{ammari2008introduction,ammari2011expansion,widlak2012hybrid,kuchment2011_review,bal2012_review}.
Generally, a hybrid problem involves two steps. First, internal energies
are measured inside the domain and, second, from their knowledge the
unknown parameters have to be reconstructed.

Many hybrid problems are governed by the Helmholtz equation (\ref{eq:physical_modeling})
or by one of its variants (complex coefficients, different type of
boundary conditions, for which a multi-frequency approach can be carried
out in the same way), e.g. microwave imaging by ultrasound deformation
\cite{triki2010,cap2011}, quantitative thermo-acoustic \cite{bal2011quantitative,ammari2012quantitative},
transient elastography and magnetic resonance elastography \cite{ammari2008elasticity,2010lee,bal2011reconstruction}
(for which the Helmholtz equation is used as a one-dimensional approximation).
The internal energies are always linear or quadratic functionals of
$u_{k}^{\phi}$ and of $\nabla u_{k}^{\phi}$ and the parameters $a$
and $q$ have to be imaged. From the reconstruction procedures discussed
in these papers, it turns out that some or all the conditions (\ref{eq:intro_1})
and (\ref{eq:intro_2}) are necessary. Thus, being able to determine
suitable illuminations independently of the unknown parameters is
fundamental, and these can be given by the multi-frequency approach
developed here. It is worth observing that the nature of this approach,
for which the frequency $k$ depends on the points in the domain,
is better suited in cases where the internal data are collected locally,
as for instance in microwave imaging by ultrasound deformation. On
the other hand, it is more restrictive when the internal data are
measured in the whole domain, as in thermo-acoustic imaging. Namely,
we will have redundant measurements in some parts of the domain, where
(\ref{eq:intro_1}) and (\ref{eq:intro_2}) are satisfied for two
or more frequencies.

As an example, we apply the theory to one of these hybrid problems,
microwave imaging by ultrasound deformation, which was introduced
in \cite{cap2011}. The internal energies have the form
\[
E=a\left|\nabla u_{k}^{\phi}\right|^{2},\qquad e=q\left(u_{k}^{\phi}\right)^{2},
\]
and the electromagnetic parameters $a$ and $q$ have to be reconstructed.
We provide reconstruction formulae for $a/q$ and $q$, which are
applicable if (\ref{eq:intro_1}) and a weaker version of (\ref{eq:intro_2})
are satisfied for a suitable set of illuminations and a finite number
of frequencies in the microwave regime.

We believe that the multi-frequency approach developed in this work
can be used in other contexts where conditions similar to (\ref{eq:intro_1})
and (\ref{eq:intro_2}) naturally arise \cite{bal2011reconstruction}.
In many situations one looks for solutions of the Helmholtz equation
satisfying certain properties: complex geometric optics is the only
tool that has been used so far \cite{bal2012_review} and presents
the difficulties described above. The proof of the results uses regularity
results for the solutions, the analyticity of $k\mapsto u_{k}^{\phi}$,
the possibility of using several frequencies in a fixed range and
the fact that for a certain value of the frequency (e.g. for $k=0$)
the required conditions are satisfied. Therefore, as long as these
requirements can be fulfilled, the multi-frequency approach can work
also with other governing equations. The Helmholtz equation is not
the only context where this theory may be applicable.

This paper is organized as follows. In Section \ref{sec:Main-Results}
we precisely describe the setting and state the main theoretical results.
Then, we apply these to a particular hybrid problem and provide reconstruction
formulae. In Section \ref{sec:Helmholtz_Equation} we consider the
mathematical aspects of the Helmholtz equation with complex $k$:
we review well-posedness and regularity results and show the analyticity
of the map $k\mapsto u_{k}^{\phi}$. In Section \ref{sec:Analyticity-Techniques}
the multi-frequency approach is discussed and used to prove the main
theorems. Section \ref{sec:Microwave-Imaging-by} is devoted to the
study of the example and to the proof of the reconstruction formulae.

Let us clarify some notation.  The space of infinitely differentiable
functions compactly supported in $\Omega$ will be denoted by $\mathcal{D}(\Omega)$.
We use the classical notation for function spaces and, unless otherwise
stated, we always consider real-valued function spaces. Whenever complex-valued
functions have to be taken, we add the letter $\C$, as for instance
in $\Hone$. Let $\gamma$ denote the trace operator from $\Hone$
to $\Hhalf$. In a Hilbert space $H$, the scalar product will be
denoted by $(\:,\,)_{H}$. We shall use the notation $\theta_{u,v}$
for the angle between two non-zero vectors $u,v$ in a Hilbert space,
i.e. $\theta_{u,v}=\arccos\frac{(u,v)}{\left\Vert u\right\Vert \left\Vert v\right\Vert }$.We
write $B(x,r)$ for the ball centered in $x\in\R^{d}$ of radius $r>0$.

\section{\label{sec:Main-Results}Main Results}

Let $d=2$ or $3$ be the dimension of the ambient space, $\Omega'\subseteq\Omega\subseteq\R^{d}$
be two $\Cl^{1,\alpha}$ bounded domains for some $\alpha\in(0,1)$.
Let $a$ be a real, symmetric tensor satisfying the ellipticity condition
\begin{equation}
\mina\left|\xi\right|^{2}\le\xi\cdot a\xi\le\maxa\left|\xi\right|^{2},\qquad\xi\in\R^{d},\label{eq:assumption_a}
\end{equation}
for some $\mina,\maxa>0$. We also suppose that $a$ is of class $\Calpr$.
Let $q\in\linfr$ satisfy
\begin{equation}
\minq\le q\le\maxq\text{ almost everywhere in }\Omega,\label{eq:assumption_q}
\end{equation}
for some $\minq,\maxq>0$. Let $\Sigma$ denote the set of the Dirichlet
eigenvalues of the problem 
\begin{equation}
\left\{ \begin{array}{l}
-\div(a\,\nabla u_{k}^{\phi})-k\, q\, u_{k}^{\phi}=0\qquad\text{in \ensuremath{\Omega},}\\
u_{k}^{\phi}=\phi\qquad\text{on \ensuremath{\partial\Omega},}
\end{array}\right.\tag{\ref{eq:physical_modeling}}\label{eq:physical_modeling-1}
\end{equation}
which is a countable set of positive numbers going to infinity. For
any $k\in\C\setminus\Sigma$ problem (\ref{eq:physical_modeling-1})
is well-posed and $u_{k}^{\phi}\in\Cone$ (see Section 3). Let $K_{\text{min}}<K_{\text{max}}$
be the bounds for the possible wavenumbers $k$ and denote $\Kad=[K_{\text{min}},K_{\text{max}}]$.
More generally, we can assume that $\Kad$ is a non-empty continuous
curve in $\C$, containing the admissible values for the frequencies
$k$ and depending on the particular problem under consideration.

In this section we summarize the main results of this paper. In Subsection
\ref{sub:On-the-critical}, the theoretical results regarding non-zero
properties of solutions of the Helmholtz are discussed. In Subsection
\ref{sub:Applications-to-Microwave} we apply the general theory to
a particular case.

\subsection{\label{sub:On-the-critical}A Multiple Frequency Approach to the
Boundary Control of the Helmholtz Equation}

Given finite subsets $K\subseteq\Kad\setminus\Sigma$ and $I\subseteq\gamma\left(\Conealpr\right)$,
the Cartesian product $K\times I$ is called a \emph{set of measurements.}

Given a set of measurements $K\times\{\phi_{i}\}$, we consider the
unique solution $u_{k}^{i}\in\Coner$ to the problem
\begin{equation}
\left\{ \begin{array}{l}
-\div(a\,\nabla u_{k}^{i})-k\, q\, u_{k}^{i}=0\qquad\text{in \ensuremath{\Omega},}\\
u_{k}^{i}=\phi_{i}\qquad\text{on \ensuremath{\partial\Omega}.}
\end{array}\right.\label{eq:hel-2}
\end{equation}
The introduction of the following class of sets of measurements is
motivated by several hybrid techniques \cite{triki2010,bal2011quantitative,cap2011,ammari2012quantitative}.
In this work we will not apply the full generality of this concept
(see Definition \ref{def:proper sets}).
\begin{defn}
\label{def:complete sets}Let $p,r,s>0$. A set of measurements $K\times\left\{ \phi_{i}:i=1,\dots,d+1\right\} $
is \emph{complete} in $\Omega'$ if for every $x\in\Omega'$ there
exists $\bar{k}=\bar{k}(x)\in K$ such that:
\begin{equation}
\left|u_{\bar{k}}^{1}(x)\right|\ge p,\tag{CSM 1}\label{eq:complete_i)}
\end{equation}
\begin{equation}
\left|\det\begin{bmatrix}\nabla u_{\bar{k}}^{2} & \cdots & \nabla u_{\bar{k}}^{d+1}\end{bmatrix}(x)\right|\ge r,\tag{CSM 2}\label{eq:complete_ii)}
\end{equation}
\begin{equation}
\bigl|\det\begin{bmatrix}u_{\bar{k}}^{1} & \cdots & u_{\bar{k}}^{d+1}\\
\nabla u_{\bar{k}}^{1} & \cdots & \nabla u_{\bar{k}}^{d+1}
\end{bmatrix}(x)\bigr|\ge s.\tag{CSM 3}\label{eq:complete_iii)}
\end{equation}

\end{defn}
The conditions (\ref{eq:complete_i)}) and (\ref{eq:complete_ii)})
guarantee that meaningful power density measurements exist in every
point. Further, (\ref{eq:complete_ii)}) and (\ref{eq:complete_iii)})
imply that there are $d$ and $d+1$ independent measurements everywhere,
respectively.

The existence of complete sets of measurements is a non trivial problem
for general $a$ and $q$. In the references cited above, some or
all these conditions are shown by using complex geometric optics solutions
\cite{sylv1987}. In view of Proposition 3.3 in \cite{bal2010inverse},
suitable illuminations can be constructed with a fixed frequency $k_{0}$,
provided that $a$ and $q$ are smooth enough. However, their construction
depends on the knowledge of $a$ and $q$, that in the inverse problems
we have in mind are unknown.

In order to tackle this issue, in Section \ref{sec:Analyticity-Techniques}
we discuss a multiple frequency approach to construct complete sets
of measurements. The conditions on the illuminations are independent
of the coefficients. Here we provide a simplified version of the main
results therein discussed.

The next theorem discusses the construction of complete sets of measurements
in dimension $d=2$. It is a particular case of Theorem \ref{thm:complete_2d}.
\begin{thm}
\label{thm:main_2d}Suppose $d=2$, $a\in\mathcal{C}^{0,1}(\overline{\Omega})$
and that $\Omega$ is convex and $\mathcal{C}^{2}$. Then we can choose
a finite $K\subseteq\Kad\setminus\Sigma$ such that 
\[
K\times\{1,x_{1},x_{2}\}
\]
is a complete set of measurements in $\Omega$.
\end{thm}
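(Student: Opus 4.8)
The plan is to exploit the analyticity of $k \mapsto u_k^\phi$ (established in Section~\ref{sec:Helmholtz_Equation}) together with the fact that at the frequency $k = 0$ the Helmholtz equation degenerates to $-\div(a\,\nabla u^i) = 0$, for which the conditions \eqref{eq:complete_i)}--\eqref{eq:complete_iii)} can be verified by classical elliptic arguments. First I would observe that for $k = 0$ we have $u_0^1 \equiv 1$ (the constant solution with boundary value $1$), so \eqref{eq:complete_i)} holds trivially with $p = 1$ everywhere. For \eqref{eq:complete_ii)}, the functions $u_0^{x_1}, u_0^{x_2}$ are the $a$-harmonic continuations of the coordinate functions; when $a$ is constant these are exactly $x_1, x_2$ and the Jacobian determinant is identically $1$, while for $a$ close to a constant (the hypothesis here is the weaker $a \in \mathcal{C}^{0,1}$, so I would instead invoke a result on absence of critical points of the $a$-harmonic map $(u_0^{x_1}, u_0^{x_2})$ on convex planar domains — this is the Alessandrini--Nesi / Bauman--Marini--Nesi type theory, or in the scalar-gradient case the results of \cite{alessandrini1986,alessandrinimagnanini1994} referenced in the introduction). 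The point is that in two dimensions on a convex $\mathcal{C}^2$ domain, the map $U_0 = (u_0^{x_1}, u_0^{x_2})$ is a diffeomorphism onto its image, so $\det \nabla U_0 > 0$ uniformly on $\overline{\Omega}$, giving \eqref{eq:complete_ii)} at $k=0$. For \eqref{eq:complete_iii)}, since $u_0^1 \equiv 1$ has zero gradient, the $3\times 3$ determinant collapses (expanding along the first column) to $\det[\nabla u_0^{x_1}\;\nabla u_0^{x_2}]$ up to sign, which is the same nonvanishing quantity.

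Next I would pass from the single frequency $k = 0$ to a finite set $K$ of admissible frequencies in $\Kad \setminus \Sigma$. The mechanism is: for each fixed $x \in \Omega$, the three quantities in \eqref{eq:complete_i)}--\eqref{eq:complete_iii)}, viewed as functions of $k$, are real-analytic (restrictions to the curve $\Kad$ of functions analytic in a complex neighborhood, by the analyticity of $k \mapsto u_k^\phi$ with values in $\Cone$ and continuity of point evaluation and of the determinant). At $k = 0$ all three are nonzero. Hence each has only isolated zeros along $\Kad$; but more is needed — I want a \emph{finite} $K$ that works simultaneously for all $x$. The standard covering argument is: by continuity in $x$ and the uniform lower bounds at $k = 0$, there is a neighborhood of $k=0$ in $\Kad$ on which the bounds persist uniformly in $x$; but $0$ may not lie in $\Kad$, so instead one argues that the "bad set" $\{x : \text{some condition fails for all } k \in K\}$ shrinks as $K$ grows, and compactness of $\overline{\Omega'}$ forces a finite $K$ to suffice. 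This is precisely the content of the general Theorem~\ref{thm:complete_2d} of which the present statement is a special case, so I would reduce to checking its hypotheses: (i) a reference frequency (here $k=0$, or a frequency in $\Kad$ reachable by analytic continuation) where the conditions hold, and (ii) the analytic/regularity framework.

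The main obstacle, and the place where the hypotheses $d=2$, $\Omega$ convex and $\mathcal{C}^2$, and $a \in \mathcal{C}^{0,1}$ are really used, is verifying \eqref{eq:complete_ii)} at the reference frequency $k=0$ — i.e. that the $a$-harmonic coordinate map has nonvanishing Jacobian on all of $\overline{\Omega}$. This is a genuinely two-dimensional phenomenon (it is false in three dimensions without a smallness assumption on $a$, which is why the $d=3$ analogue in the introduction requires $a$ close to constant) and relies on the quasiconformal/topological theory of planar elliptic systems: the Radó--Kneser--Choquet-type theorem for $a$-harmonic maps with homeomorphic (here, convex-image) boundary data. I would cite the appropriate univalence result for solutions of $\div(a\nabla u) = 0$ on convex domains. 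A secondary technical point is ensuring $0 \in \Kad$ or, if not, that the analytic continuation of $u_k^\phi$ from a point of $\Kad$ down to $k = 0$ is legitimate — which it is, since $0 \notin \Sigma$ and $\Cl \setminus \Sigma$ is connected, so the analytic family extends and the reference-frequency computation at $0$ controls the behavior along $\Kad$. Once \eqref{eq:complete_ii)} (hence \eqref{eq:complete_iii)}) and \eqref{eq:complete_i)} are secured at $k=0$, the remaining passage to a finite $K$ is the soft analyticity-plus-compactness argument encoded in the general theorem.
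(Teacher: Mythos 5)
Your proposal is correct and follows the same overall strategy as the paper: verify (\ref{eq:complete_i)})--(\ref{eq:complete_iii)}) at the reference frequency $k=0$ (the Maximum Principle for the first; the Rad\'o--Kneser--Choquet/Bauman--Marini--Nesi univalence theorem for $a$-harmonic maps with convex-image boundary data for the second, which is the paper's Proposition \ref{prop:BMN} and is exactly where $d=2$, convexity, $\Omega\in\mathcal{C}^{2}$ and $a\in\mathcal{C}^{0,1}$ enter to get non-vanishing up to $\overline{\Omega}$), and then transfer to a finite $K\subseteq\Kad\setminus\Sigma$ using the analyticity of $k\mapsto u_{k}^{\phi}$ in $\Cone$ together with a compactness/covering argument (the paper's Lemma \ref{lem:preliminary_analyticity} with $k^{x}=0$; your remark that $0\notin\Kad$ is immaterial because $\C\setminus\Sigma$ is connected is exactly how the paper's lemma is set up). The one genuine difference is your treatment of (\ref{eq:complete_iii)}): since $\phi_{1}=1$ forces $u_{0}^{1}\equiv1$ and $\nabla u_{0}^{1}=0$, the $3\times3$ determinant collapses by cofactor expansion to $\det\bigl[\nabla u_{0}^{2}\;\nabla u_{0}^{3}\bigr]$, so no separate argument is needed. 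This is correct and bypasses the paper's Lemma \ref{lem:condition_iii} (whose proof, via the quotients $u_{i}/u_{1}$ solving $-\div(u_{1}^{2}a\nabla\psi_{i})=0$ and a second application of Proposition \ref{prop:BMN} to $(\phi_{2}/\phi_{1},\phi_{3}/\phi_{1})$, is the most technical ingredient of the general Theorem \ref{thm:complete_2d}); your shortcut is the cleanest route for the specific illuminations $\{1,x_{1},x_{2}\}$, though it does not cover general $\phi_{1}$ of fixed sign. One point to tighten: a complete set requires a \emph{single} $\bar{k}(x)$ at which all three conditions hold simultaneously, so treating the three quantities as three separate analytic functions of $k$ is not quite enough as written; either observe that the union of their discrete zero sets still omits some $k_{l_{x}}$ near the accumulation point, or, as the paper does, apply the analyticity lemma once to the product functional $\zeta=u^{1}\cdot\det\bigl[\nabla u^{2}\;\nabla u^{3}\bigr]\cdot\det\bigl[\,(u^{1},\nabla u^{1})\;(u^{2},\nabla u^{2})\;(u^{3},\nabla u^{3})\,\bigr]$, whose non-vanishing at a given $k$ forces all three factors to be non-vanishing at that same $k$, and then use the uniform $\Cone$ bounds to convert strict positivity of the sum over $K$ into the quantitative lower bounds $p,r,s$.
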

The next theorem deals with the construction of complete sets of measurements
in dimension $d=3$. It is a particular case of Theorem \ref{thm:complete_3D}.
\begin{thm}
\label{thm:main_3d}Suppose $d=3$. Let $a_{0}$ be a constant, symmetric
and positive definite matrix. There exists $\delta>0$ such that for
any $s\in\Calpr$ with $\left\Vert s\right\Vert _{\Calpr}\le\delta$
we can choose a finite $K\subseteq\Kad\setminus\Sigma$ such that
\[
K\times\{1,x_{1},x_{2},x_{3}\}
\]
is a complete set of measurements in $\Omega$ for $a=a_{0}+s$.
\end{thm}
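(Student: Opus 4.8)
The idea is to treat the frequency $k=0$ as an anchor: there the three conditions of Definition~\ref{def:complete sets} hold on all of $\overline\Omega$ when $a=a_0$, they survive a small perturbation of $a$ by continuity of the solution map, and the analyticity of $k\mapsto u_k^i$ then lets us replace $k=0$ — which need not lie in $\Kad$ — by finitely many admissible frequencies. Write $u_k^i=u_k^{\phi_i}$ for $\phi_1=1$, $\phi_{j+1}=x_j$ ($j=1,2,3$), and set
\[
\begin{gathered}
f_1(k,x)=u_k^1(x),\qquad f_2(k,x)=\det\begin{bmatrix}\nabla u_k^2&\nabla u_k^3&\nabla u_k^4\end{bmatrix}(x),\\
f_3(k,x)=\det\begin{bmatrix}u_k^1&\cdots&u_k^4\\\nabla u_k^1&\cdots&\nabla u_k^4\end{bmatrix}(x),
\end{gathered}
\]
so that \eqref{eq:complete_i)}--\eqref{eq:complete_iii)} at a point $x$ read $|f_1(\bar k,x)|\ge p$, $|f_2(\bar k,x)|\ge r$, $|f_3(\bar k,x)|\ge s$. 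Since the Dirichlet eigenvalues are positive, $0\notin\Sigma$ and \eqref{eq:hel-2} is well posed at $k=0$, where it becomes $-\div(a\nabla u_0^i)=0$ in $\Omega$, $u_0^i=\phi_i$ on $\partial\Omega$. For $a=a_0$ constant the fields $a_0\nabla 1=0$ and $a_0\nabla x_j=a_0e_j$ are constant, hence divergence free, so by uniqueness $u_0^1=1$ and $u_0^{j+1}=x_j$; thus $f_1(0,\cdot)\equiv f_2(0,\cdot)\equiv f_3(0,\cdot)\equiv 1$ on $\overline\Omega$ (for $f_3$ the matrix is triangular with unit diagonal).

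\emph{Perturbing $a$.} For $a=a_0+s$ with $s\in\Calpr$ small, $w_i:=u_0^i-\phi_i\in\Vr$ solves $-\div((a_0+s)\nabla w_i)=\div(s\,\nabla\phi_i)$ with homogeneous boundary data, so $\|w_i\|_{\Honer}\lesssim\|s\|_{\linfr}$. Combining this with the Schauder-type bound $\|u_0^i\|_{\Conealpr}\le C\bigl(\|u_0^i\|_{\linfr}+\|\phi_i\|_{\Conealpr}\bigr)$, whose constant depends only on $\lambda,\Lambda$ and on $\|a_0+s\|_{\Calpr}$ and is therefore uniform for $\|s\|_{\Calpr}\le\delta$, a routine compactness argument gives $\|u_0^i-\phi_i\|_{\Coner}\to 0$ as $\|s\|_{\Calpr}\to 0$. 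Each $f_j(0,\cdot)$ is a fixed polynomial in the components of $u_0^1,\dots,u_0^4$ and their first derivatives, hence depends continuously, in $C(\overline\Omega)$, on $(u_0^1,\dots,u_0^4)\in\Coner^4$; so there is $\delta>0$ such that for $a=a_0+s$ with $\|s\|_{\Calpr}\le\delta$ (in particular $a$ is still elliptic) one has $f_j(0,x)\ge\tfrac{1}{2}$ for all $x\in\overline\Omega$ and $j=1,2,3$.

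\emph{Freeing the frequency.} Fix such an $a$. By Section~\ref{sec:Helmholtz_Equation}, $k\mapsto u_k^i$ is analytic from $\C\setminus\Sigma$ into $\Cone$; composing with the bounded functionals $u\mapsto u(x)$ and $u\mapsto\nabla u(x)$ and taking products and determinants, each $k\mapsto f_j(k,x)$ is analytic on $\C\setminus\Sigma$. As $\Sigma$ is a discrete subset of $(0,\infty)$, $\C\setminus\Sigma$ is connected, so $f_j(\cdot,x)$ — being nonzero at $k=0$ — is not identically zero and hence has an at most countable zero set in $\C\setminus\Sigma$. Therefore for every $x\in\overline\Omega$ the set $\Kad\setminus\bigl(\Sigma\cup\{k:f_1(k,x)f_2(k,x)f_3(k,x)=0\}\bigr)$ is $\Kad$ minus a countable set and hence, $\Kad$ being uncountable, non-empty; this means the open sets $V_k:=\{x\in\overline\Omega:f_1(k,x)f_2(k,x)f_3(k,x)\ne 0\}$, for $k\in\Kad\setminus\Sigma$, cover the compact set $\overline\Omega$. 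Extract a finite subcover $V_{k_1},\dots,V_{k_N}$ and set $K:=\{k_1,\dots,k_N\}\subseteq\Kad\setminus\Sigma$. The function $G(x):=\max_{1\le m\le N}\min_{1\le j\le 3}|f_j(k_m,x)|$ is continuous and strictly positive on $\overline\Omega$, so $\mu:=\min_{\overline\Omega}G>0$; choosing for each $x$ an index $m$ attaining the maximum and putting $\bar k(x)=k_m$ yields $|f_j(\bar k(x),x)|\ge\mu$ for $j=1,2,3$, i.e. \eqref{eq:complete_i)}--\eqref{eq:complete_iii)} with $p=r=s=\mu$. Hence $K\times\{1,x_1,x_2,x_3\}$ is a complete set of measurements in $\overline\Omega$, in particular in $\Omega$, for $a=a_0+s$.

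\emph{Main obstacle.} Everything here is soft except the perturbation step: one must show that at $k=0$ the solutions converge to $\phi_i$ not merely in $H^1$ but in $\Coner$ — so that the two determinant conditions, which involve the gradients, are not destroyed — with Schauder constants uniform in $s$. This is exactly where, and why, the hypothesis that $a$ is close to a constant matrix is used; it is also the reason the argument works in every dimension, in contrast with the sharper but two-dimensional Theorem~\ref{thm:main_2d}.
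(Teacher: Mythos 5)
Your proposal is correct and follows essentially the same route as the paper: anchor at $k=0$ where the constant-coefficient solutions are $1,x_1,x_2,x_3$, transfer the three nonvanishing conditions to $a=a_0+s$ via energy plus Schauder estimates with constants uniform in $s$, and then use analyticity of $k\mapsto u_k^{\phi}$ in $\Cone$ together with a compactness/covering argument to extract a finite admissible $K$ and a uniform lower bound. The only (immaterial) differences are that you reprove the content of Lemma \ref{lem:preliminary_analyticity} inline by covering $\overline{\Omega}$ with the sets $V_k$ rather than with balls $B(x,r_x)$, and you obtain the $\Coner$-closeness at $k=0$ by a compactness argument where the paper derives the explicit bound $\left\Vert u_{0}^{i}(s)-u_{0}^{i}(0)\right\Vert _{\Coner}\le c\,\delta$.
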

The proofs of these statements can be found in Section \ref{sec:Analyticity-Techniques},
which also contains more general versions of the previous results.
In particular, we shall see that in two dimensions the illuminations
$1,x_{1}$ and $x_{2}$ represent a simple selection from a much wider
class and the domain does not have to be convex. Moreover, we describe
how to choose the finite set of frequencies $K$.

\subsection{\label{sub:Applications-to-Microwave}Applications to Microwave Imaging
by Ultrasound Deformation}

We now discuss an example to justify the introduction of complete
sets. In fact, for our purposes, a wider class of sets of measurements
is sufficient.

We consider the hybrid problem arising from the combination of microwaves
and ultrasounds which was introduced in \cite{cap2011}. Full details
and the proofs of the results are given in Section \ref{sec:Microwave-Imaging-by}.
In addition to the previous assumptions, we suppose that $a$ is scalar-valued.
In microwave imaging, $a$ is the inverse of the magnetic permeability,
$q$ is the electric permittivity and $ $$\Kad=[K_{\text{min}},K_{\text{max}}]$,
with $K_{\text{min}}>0$, are the admissible frequencies in the microwave
regime.

Given a set of measurements $K\times\left\{ \phi_{i}\right\} $ we
consider power density measurements of the form
\begin{equation}
e_{k}^{ij}=q\, u_{k}^{i}\, u_{k}^{j},\qquad E_{k}^{ij}=a\,\nabla u_{k}^{i}\cdot\nabla u_{k}^{j},\label{eq:internal energies-1}
\end{equation}
where $u_{k}^{i}$ is given by (\ref{eq:hel-2}). For simplicity,
we denote $e_{k}=(e_{k}^{ij})_{ij}$ and similarly for $E$. These
internal energies have to be considered as known functions in $\Omega'$.
\begin{problem}
\label{prob:Choose-a-suitable}Choose a suitable set of measurements
\foreignlanguage{english}{$K\times\left\{ \phi_{i}\right\} $} and
find $a$ and $q$ in $\Omega'$ from the knowledge of $e_{k}^{ij}$
and $E_{k}^{ij}$ in $\Omega'$.
\end{problem}
Problem \ref{prob:Choose-a-suitable} is solved via two reconstruction
formulae for $a/q$ and $q$, respectively, which we shall now describe.
Their applicability is guaranteed if \foreignlanguage{english}{$K\times\left\{ \phi_{i}\right\} $}
is a proper set of measurements in $\Omega'$.
\begin{defn}
\label{def:proper sets}Let $p$ and $r$ be two positive constants.
A set of measurements $K\times\left\{ \phi_{i}:i=1,2,3\right\} $
is  \emph{proper} in $\Omega'$ if for every $x\in\Omega'$ there
exists $\bar{k}=\bar{k}(x)\in K$ such that:
\begin{equation}
\left|u_{\bar{k}}^{1}(x)\right|\ge p,\tag{PSM 1}\label{eq:proper_i)}
\end{equation}
\begin{equation}
\bigl|\nabla u_{\bar{k}}^{2}(x)\bigr|\bigl|\nabla u_{\bar{k}}^{3}(x)\bigr|\left|\sin\theta_{\nabla u_{\bar{k}}^{2},\nabla u_{\bar{k}}^{3}}(x)\right|\ge r.\tag{PSM 2}\label{eq:proper_ii)}
\end{equation}
The collection of all proper sets of measurements in $\Omega'$ with
constants $p$ and $r$ will be denoted by $\mathcal{P}(\Omega';p,r)$.
\end{defn}
Clearly, the conditions characterizing a proper set are weaker than
the conditions of a complete set: the main difference relies in the
requirement of only two independent measurements in any dimension.
Thus, the construction of proper sets can be easily achieved by using
the Theorems of the previous subsection (in dimension three the illumination
$x_{3}$ is not needed).

The next statement gives an exact formula for $a/q$. The formula
was first derived in \cite{cap2011} for the two-dimensional case,
and we have extended it to any dimension. We use the following notation
for a matrix-valued function $M$ of size $N$
\[
\tr(M)=\sum_{i=1}^{N}M_{ii},\qquad\left|\nabla M\right|_{2}^{2}=\sum_{i,j=1}^{N}\left|\nabla M_{ij}\right|^{2}.
\]

\begin{thm}
\label{thm:formulaaq} Take $p,r>0$ and $K\times\left\{ \phi_{i}\right\} \in\mathcal{P}(\Omega';p,r)$.
Suppose that
\begin{equation}
\left|E_{k}^{ii}(x)\right|,\left|e_{k}^{ii}(x)\right|\le b,\qquad x\in\Omega',\label{eq:hp_with_M}
\end{equation}
for some $b>0$. Take $x\in\Omega'$ and $\bar{k}$ as in Definition
\ref{def:proper sets}. Then there exists $C=C(\mina,\minq,b)>0$
such that 
\begin{equation}
\frac{\tr(e_{\bar{k}})\,\tr(E_{\bar{k}})-\tr(e_{\bar{k}}E_{\bar{k}})}{\tr(e_{\bar{k}})^{2}}(x)\ge C\, p^{2}r^{4},\label{eq:positivity of traces}
\end{equation}
and $a/q$ is given in terms of the data by
\begin{equation}
\left|\nabla(e_{\bar{k}}/\tr(e_{\bar{k}}))\right|_{2}^{2}\,\frac{a}{q}=2\,\frac{\tr(e_{\bar{k}})\,\tr(E_{\bar{k}})-\tr(e_{\bar{k}}E_{\bar{k}})}{\tr(e_{\bar{k}})^{2}}\quad\text{in }x.\label{eq:formula for a/q}
\end{equation}

\end{thm}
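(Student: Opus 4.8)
The plan is to express everything, at the point $x$ with $\bar k=\bar k(x)$ as in Definition \ref{def:proper sets}, in terms of the column vector $U=(u_{\bar k}^{1},u_{\bar k}^{2},u_{\bar k}^{3})^{T}$ and the $3\times d$ matrix $\mathbf R$ with rows $\nabla u_{\bar k}^{i}$. Because $a$ is scalar here, $e_{\bar k}=q\,UU^{T}$ has rank one and $E_{\bar k}=a\,\mathbf R\mathbf R^{T}$, so $\tr(e_{\bar k})=q|U|^{2}$, $\tr(E_{\bar k})=a|\nabla U|^{2}$ with $|\nabla U|^{2}:=\sum_{i}|\nabla u_{\bar k}^{i}|^{2}$, and, using $\sum_{i}u_{\bar k}^{i}\nabla u_{\bar k}^{i}=|U|\,\nabla|U|$, also $\tr(e_{\bar k}E_{\bar k})=q\,a\,|U|^{2}|\nabla|U||^{2}$. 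By (\ref{eq:proper_i)}) and (\ref{eq:assumption_q}) we have $\tr(e_{\bar k})\ge q(u_{\bar k}^{1})^{2}\ge\beta_{1}p^{2}>0$, so $e_{\bar k}/\tr(e_{\bar k})=ww^{T}$ with $w:=U/|U|$ --- note this is \emph{independent of $q$}, hence its gradient is well defined from the $\mathcal C^{1}$ solutions even though $q\in L^{\infty}$ only. Differentiating $w_{i}=u_{\bar k}^{i}/|U|$ and summing yields the identity $|\nabla U|^{2}-|\nabla|U||^{2}=|U|^{2}|\nabla w|^{2}$, whence
\[
\frac{\tr(e_{\bar k})\tr(E_{\bar k})-\tr(e_{\bar k}E_{\bar k})}{\tr(e_{\bar k})^{2}}=\frac{q\,a\,|U|^{2}\bigl(|\nabla U|^{2}-|\nabla|U||^{2}\bigr)}{q^{2}|U|^{4}}=\frac{a}{q}\,|\nabla w|^{2}.
\]
Next, expanding $\nabla(w_{i}w_{j})=w_{j}\nabla w_{i}+w_{i}\nabla w_{j}$ and summing over $i,j$, the two ``square'' terms give $2|\nabla w|^{2}$ (since $|w|=1$) while the cross term equals $2\bigl|\sum_{i}w_{i}\nabla w_{i}\bigr|^{2}=\tfrac12\bigl|\nabla(|w|^{2})\bigr|^{2}=0$; hence $\bigl|\nabla(e_{\bar k}/\tr(e_{\bar k}))\bigr|_{2}^{2}=2|\nabla w|^{2}$, and comparing with the display gives (\ref{eq:formula for a/q}). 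Once (\ref{eq:positivity of traces}) is proved the right side of (\ref{eq:formula for a/q}) is positive, so the coefficient $\bigl|\nabla(e_{\bar k}/\tr(e_{\bar k}))\bigr|_{2}^{2}$ is positive and $a/q$ is recovered from the data.

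For (\ref{eq:positivity of traces}) it then remains to bound $\tfrac aq|\nabla w|^{2}$ from below, i.e.\ to bound $|\nabla U|^{2}-|\nabla|U||^{2}$ below. I would use the symmetric positive semidefinite $3\times3$ Gram matrix $\Gamma:=\mathbf R\mathbf R^{T}=(\nabla u_{\bar k}^{i}\cdot\nabla u_{\bar k}^{j})_{ij}$: since $|\nabla U|^{2}-|\nabla|U||^{2}=\tr\Gamma-w^{T}\Gamma w=w^{T}(\tr(\Gamma)I-\Gamma)w$ and $|w|=1$, this is at least $\tr\Gamma-\mu_{\max}(\Gamma)$, the sum of the two smallest eigenvalues of $\Gamma$, hence at least its middle eigenvalue $\mu_{2}(\Gamma)$. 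By Cauchy interlacing $\mu_{2}(\Gamma)$ is at least the smallest eigenvalue of the principal $2\times2$ block of $\Gamma$ on indices $\{2,3\}$; that block has determinant $|\nabla u_{\bar k}^{2}|^{2}|\nabla u_{\bar k}^{3}|^{2}-(\nabla u_{\bar k}^{2}\cdot\nabla u_{\bar k}^{3})^{2}=\bigl(|\nabla u_{\bar k}^{2}||\nabla u_{\bar k}^{3}||\sin\theta_{\nabla u_{\bar k}^{2},\nabla u_{\bar k}^{3}}|\bigr)^{2}\ge r^{2}$ by (\ref{eq:proper_ii)}), and trace $|\nabla u_{\bar k}^{2}|^{2}+|\nabla u_{\bar k}^{3}|^{2}\le 2b/a$ by (\ref{eq:hp_with_M}) together with $E_{\bar k}^{ii}=a|\nabla u_{\bar k}^{i}|^{2}$; so its smallest eigenvalue is $\ge ar^{2}/(2b)$. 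Feeding this back, $\tr(e_{\bar k})\tr(E_{\bar k})-\tr(e_{\bar k}E_{\bar k})=q\,a\,|U|^{2}\bigl(|\nabla U|^{2}-|\nabla|U||^{2}\bigr)\ge\tfrac{a^{2}r^{2}}{2b}\tr(e_{\bar k})$; dividing by $\tr(e_{\bar k})^{2}$ and using $\tr(e_{\bar k})\le 3b$ (from (\ref{eq:hp_with_M})) and $a\ge\lambda$ (scalar ellipticity) gives $\tfrac aq|\nabla w|^{2}\ge\lambda^{2}r^{2}/(6b^{2})$. Finally (\ref{eq:hp_with_M}), (\ref{eq:assumption_q}), (\ref{eq:assumption_a}) force $p^{2}\le(u_{\bar k}^{1})^{2}\le b/\beta_{1}$ and $r\le|\nabla u_{\bar k}^{2}||\nabla u_{\bar k}^{3}|\le b/\lambda$, so $p^{2}r^{4}\le(b^{3}/\beta_{1}\lambda^{2})\,r^{2}$ and (\ref{eq:positivity of traces}) follows with $C=\beta_{1}\lambda^{4}/(6b^{5})$, which depends only on $\lambda,\beta_{1},b$.

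The identities of the first paragraph are routine differentiation; the delicate step is the lower bound. A direct estimate $|\nabla w|^{2}\ge|\nabla w_{2}|^{2}+|\nabla w_{3}|^{2}\ge 2|\nabla w_{2}\wedge\nabla w_{3}|$ followed by expanding $\nabla w_{2}\wedge\nabla w_{3}$ will not work, because the terms produced by $\nabla|U|$ can cancel the leading contribution coming from $\nabla u_{\bar k}^{2}\wedge\nabla u_{\bar k}^{3}$ (there is no a priori relation between the direction of $U$ and those of the gradients); passing to the eigenvalues of $\Gamma$ avoids this, since the direction $w$ can only subtract the single largest eigenvalue. The other point to watch is that $C$ must not depend on $\beta_{2}$ or $\Lambda$: this is why one keeps the factor $a^{2}$ jointly produced by $\tr(E_{\bar k})$ and the block determinant and cancels it against $b$ via $|\nabla u_{\bar k}^{i}|^{2}=E_{\bar k}^{ii}/a$, rather than estimating $a/q$ directly.
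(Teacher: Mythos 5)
Your proof is correct, and for the key estimate (\ref{eq:positivity of traces}) it takes a genuinely different route from the paper. The paper obtains the formula (\ref{eq:formula for a/q}) by citing Proposition 3.3 of \cite{cap2011}, whereas you rederive it from the identities $e_{\bar k}/\tr(e_{\bar k})=ww^{T}$ with $w=U/|U|$, $\bigl|\nabla(ww^{T})\bigr|_{2}^{2}=2|\nabla w|^{2}$, and $\tr(e_{\bar k})\tr(E_{\bar k})-\tr(e_{\bar k}E_{\bar k})=qa|U|^{2}\bigl(|\nabla U|^{2}-|\nabla|U||^{2}\bigr)$; this is essentially the content of the cited result, but it is a welcome self-contained check. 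For the lower bound, the paper works with the Hilbert--Schmidt angle $\theta_{e_{\bar k},E_{\bar k}}$: it shows that the non-degeneracy $\bigl|(E_{\bar k}^{2,3})^{2}-E_{\bar k}^{2,2}E_{\bar k}^{3,3}\bigr|\ge c\,r^{2}$ guaranteed by (\ref{eq:proper_ii)}) forces $E_{\bar k}$ to be quantitatively far from the rank-one direction of $e_{\bar k}$, i.e.\ bounds $1-\cos\theta_{e_{\bar k},E_{\bar k}}$ from below, and then converts this into (\ref{eq:positivity of traces}) via a chain of triangle inequalities with unspecified constants. You instead recognize the numerator as $qa|U|^{2}\bigl(\tr\Gamma-w^{T}\Gamma w\bigr)$ for the Gram matrix $\Gamma$ of the gradients, bound $\tr\Gamma-w^{T}\Gamma w$ below by the middle eigenvalue of $\Gamma$, and control that eigenvalue by Cauchy interlacing with the $\{2,3\}$ principal block, whose determinant is exactly the square of the quantity in (\ref{eq:proper_ii)}). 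This is cleaner, produces the explicit constant $C=\minq\mina^{4}/(6b^{5})$, and in fact yields the stronger bound $\mina^{2}r^{2}/(6b^{2})$, which you then legitimately weaken to the stated form $C\,p^{2}r^{4}$ using the a priori bounds $p^{2}\le b/\minq$ and $r\le b/\mina$ forced by (\ref{eq:hp_with_M}). Your closing remark about why a naive wedge-product expansion of $\nabla w_{2}\wedge\nabla w_{3}$ fails is exactly the pitfall the eigenvalue argument avoids, and your observation that $e_{\bar k}/\tr(e_{\bar k})$ is independent of $q$ (so that its gradient is classically defined despite $q\in\linfr$ only) is a point the paper leaves implicit.
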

We now give an exact formula for $q$. If compared to the one described
in \cite{cap2011}, this one is valid in any dimension and does not
require the set of measurements to be complete. It only requires that
\[
\tr(e):=\sum_{i,k}e_{k}^{ii}\ge c>0\quad\text{in }\Omega',
\]
which holds provided that the set of measurements satisfies (\ref{eq:proper_i)}).
\begin{thm}
\label{thm:formulaq}Let \foreignlanguage{english}{\textup{$K\times\left\{ \phi_{i}\right\} $}}
be a proper set of measurements. Suppose $q\in\Honer$. Then $\log q$
is the unique solution to the problem
\[
\left\{ \begin{array}{l}
-\div\left(G\,\tr(e)\,\nabla u\right)=-\div\left(G\,\nabla\left(\tr(e)\right)\right)+2\sum_{k,i}\left(E_{k}^{ii}-ke_{k}^{ii}\right)\quad\text{in }\Omega',\\
u=\log q_{|\partial\Omega'}\qquad\text{on \ensuremath{\partial\Omega}}'.
\end{array}\right.
\]

\end{thm}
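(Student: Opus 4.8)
The plan is to derive the PDE directly from the equations satisfied by the individual solutions $u_{k}^{i}$, by summing over the available frequencies and illuminations. First I would fix a frequency $k$ and an index $i$, and observe that $u := u_{k}^{i}$ solves $-\div(a\,\nabla u) = k\,q\,u$ in $\Omega'$. I would like to express $a$ in terms of the data via Theorem \ref{thm:formulaaq}; writing $G := |\nabla(e_{k}/\tr(e_{k}))|_{2}^{2}$ (more precisely the quantity appearing on the left of \eqref{eq:formula for a/q}, which is positive by \eqref{eq:positivity of traces} whenever the set of measurements is proper, so $G$ is a legitimate positive coefficient on $\Omega'$ with the uniform lower bound $Cp^{2}r^{4}/b$ coming from \eqref{eq:positivity of traces} and \eqref{eq:hp_with_M}). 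Then $a = q\cdot\frac{2}{G}\cdot\frac{\tr(e_{k})\tr(E_{k})-\tr(e_{k}E_{k})}{\tr(e_{k})^{2}}$, but more usefully $a\,\nabla u^{i}_{k}\cdot\nabla u^{j}_{k} = E_{k}^{ij}$ and $q\,u^{i}_{k}u^{j}_{k} = e^{ij}_{k}$ are themselves the data, so I should aim to rewrite the divergence-form equation so that every term becomes one of $e,E$ or a derivative thereof.

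The key algebraic manipulation is the following. Multiply the equation for $u^{i}_{k}$ by $u^{i}_{k}$ and rearrange: $u^{i}_{k}\div(a\,\nabla u^{i}_{k}) = -k\,q\,(u^{i}_{k})^{2} = -k\,e^{ii}_{k}$. Using the product rule, $u^{i}_{k}\div(a\,\nabla u^{i}_{k}) = \div(a\,u^{i}_{k}\nabla u^{i}_{k}) - a\,|\nabla u^{i}_{k}|^{2} = \tfrac12\div(a\,\nabla((u^{i}_{k})^{2})) - E^{ii}_{k}$. Hence $\div(a\,\nabla((u^{i}_{k})^{2})) = 2E^{ii}_{k} - 2k\,e^{ii}_{k}$. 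Now sum over $i$ and $k$; since $a\,\nabla((u^{i}_{k})^{2}) = \nabla(a\,(u^{i}_{k})^{2}) - (u^{i}_{k})^{2}\nabla a = \nabla((a/q)\,e^{ii}_{k}) - \tfrac{e^{ii}_{k}}{q}\nabla a$, and using $a/q = (2/G)\cdot(\text{trace expression})/\tr(e_k)^2$ from \eqref{eq:formula for a/q}, one gets after summation an identity of the form $\div\bigl(\tfrac{a}{q}\nabla(\tr(e))\bigr) - \div\bigl(\tfrac{\tr(e)}{q}\nabla a\bigr) = 2\sum_{k,i}(E^{ii}_{k} - k\,e^{ii}_{k})$. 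The point is that $\tfrac{1}{q}\nabla a - \tfrac{a}{q^{2}}\nabla q = \nabla(a/q)$, so $\tfrac{\tr(e)}{q}\nabla a = \tr(e)\nabla(a/q) + \tfrac{a}{q}\tr(e)\cdot\tfrac{\nabla q}{q} = \tr(e)\nabla(a/q) + G\,\tr(e)\cdot\tfrac{a/q}{G}\nabla\log q$. One then has to match the diffusion coefficient $G\,\tr(e)$ in front of $\nabla u = \nabla\log q$: the coefficient $a/q$ must be replaced by $G\,\tr(e)$ by multiplying the original equation through by $G$ before summing, which is the reason the stated PDE has $G\,\tr(e)$ and not $a/q$ in the principal part. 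Carrying this through carefully, with $a/q$ everywhere substituted by its data expression (so that $(a/q)\,G\,\tr(e)^{2} = 2(\tr(e)\tr(E)-\tr(eE))$ pointwise, summed over $k$), yields exactly the divergence-form equation in the statement, with $\log q$ as a solution.

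For uniqueness, the operator $u \mapsto -\div(G\,\tr(e)\,\nabla u)$ on $\Omega'$ is a self-adjoint, uniformly elliptic operator in divergence form: $G\,\tr(e) \ge (Cp^{2}r^{4}/b)\cdot c > 0$ on $\Omega'$, and $G\,\tr(e) \in L^{\infty}(\Omega')$ since $e_{k},E_{k}$ are $\mathcal{C}^{1}$ (the solutions are $\mathcal{C}^{1}$ by Section 3) and $\Omega'$ is bounded. Hence the Dirichlet problem with boundary data $\log q_{|\partial\Omega'} \in H^{1/2}(\partial\Omega')$ (well-defined since $q \in H^{1}(\Omega')$ and bounded below, so $\log q \in H^{1}(\Omega')$) has a unique solution in $H^{1}(\Omega')$ by Lax--Milgram applied to the shifted problem. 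Since $\log q$ itself solves the equation (this requires checking that every term in the right-hand side lies in $H^{-1}(\Omega')$, which follows from $\tr(e) \in H^{1}(\Omega')$, $G$ being bounded, and $E^{ii}_{k}, e^{ii}_{k} \in L^{\infty}(\Omega')$) and satisfies the boundary condition, it must be the unique solution.

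The main obstacle I anticipate is the bookkeeping in the second step: one must verify that after substituting the data-expression for $a/q$ and summing over $k$ and $i$, all the $\nabla a$ and $\nabla q$ terms recombine cleanly into $\div(G\,\tr(e)\,\nabla\log q)$ plus $\div(G\,\nabla\tr(e))$, with no leftover terms. In particular one needs to be careful that the identity \eqref{eq:formula for a/q} holds pointwise at each $x$ for the \emph{specific} $\bar k(x)$, whereas the sums $\tr(e) = \sum_{k,i}e^{ii}_{k}$ run over \emph{all} $k \in K$ — so I should first establish that for the argument it suffices to have $G(x)\,\tr(e)(x)^{2}(a/q)(x) = 2\sum_{k}(\tr(e_{k})\tr(E_{k})-\tr(e_{k}E_{k}))(x)$ as an identity of functions, which follows by noting that for \emph{every} admissible $k$ (not only the distinguished $\bar k$) the computation $\div(a\nabla((u^{i}_{k})^{2})) = 2E^{ii}_{k}-2ke^{ii}_{k}$ is valid, so no selection of $\bar k$ is actually needed in the derivation of the PDE — the role of properness is only to guarantee $G > 0$ and $\tr(e) > 0$ so that the equation is non-degenerate. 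Resolving this subtlety is what makes the formula valid "without requiring the set of measurements to be complete," as claimed in the text.
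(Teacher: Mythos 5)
Your starting identity $\div(a\,\nabla((u_{k}^{i})^{2}))=2E_{k}^{ii}-2k\,e_{k}^{ii}$ is exactly the paper's Lemma \ref{lem:u_i-u_j} (with $i=j$), and your closing observation --- that this identity holds for \emph{every} $k\in K$, so that no selection of $\bar{k}(x)$ enters the derivation and properness is only needed for non-degeneracy of the resulting operator --- is also how the paper proceeds. However, the middle of your argument is derailed by a misreading of the symbol $G$: in the statement, $G$ denotes the ratio $a/q$ itself (the paper states that Theorem \ref{thm:formulaq} ``allows to reconstruct $q$ from the knowledge of $G=a/q$'', the ratio having already been obtained from (\ref{eq:formula for a/q})), not the factor $\left|\nabla(e_{k}/\tr(e_{k}))\right|_{2}^{2}$. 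With the correct reading there is nothing to ``match'': writing $\nabla(u_{k}^{i}u_{k}^{i})=(\nabla e_{k}^{ii}-e_{k}^{ii}\nabla\log q)/q$ and multiplying by $a$ gives $a\,\nabla((u_{k}^{i})^{2})=G\,\nabla e_{k}^{ii}-G\,e_{k}^{ii}\,\nabla\log q$; taking divergences, invoking your starting identity and summing over $i$ and $k$ yields the stated PDE at once, with $G\,\tr(e)$ in the principal part for free. Your proposed fix --- ``multiplying the original equation through by $G$ before summing'' --- cannot work: multiplication by a non-constant function does not commute with the divergence, so it destroys the divergence structure and produces an equation different from the one stated.

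Two further problems with your route. First, the decomposition $a\,\nabla((u_{k}^{i})^{2})=\nabla(a\,(u_{k}^{i})^{2})-(u_{k}^{i})^{2}\nabla a$ requires a weak gradient of $a$, which is not available under the standing hypothesis $a\in\Calprsca$; the computation above never differentiates $a$, only $q$ (assumed in $\Honer$) and the products $u_{k}^{i}u_{k}^{j}$. Second, your intermediate identity $\div\bigl(\frac{a}{q}\nabla\tr(e)\bigr)-\div\bigl(\frac{\tr(e)}{q}\nabla a\bigr)=2\sum_{k,i}(E_{k}^{ii}-k\,e_{k}^{ii})$ is missing the term $\div\bigl(\tr(e)\,\nabla(a/q)\bigr)$ arising from $\div\bigl(\nabla((a/q)\,e_{k}^{ii})\bigr)=\div\bigl((a/q)\nabla e_{k}^{ii}\bigr)+\div\bigl(e_{k}^{ii}\nabla(a/q)\bigr)$, and carrying your version through the substitution $\frac{\tr(e)}{q}\nabla a=\tr(e)\nabla(a/q)+\frac{a}{q}\tr(e)\nabla\log q$ leaves exactly that term uncancelled. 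Your uniqueness argument via Lax--Milgram is sound once $G$ is read correctly, since then $G\,\tr(e)\ge(\lambda/\beta_{2})\,\beta_{1}p^{2}>0$ on $\Omega'$.
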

The previous result allows to reconstruct $q$ from the knowledge
of $G=a/q$. Also, $q$ is supposed to be known on the boundary of
$\Omega'$, which is a reasonable assumption since we are mainly interested
in detecting inclusions inside the domain. At this point, an obvious
formula for $a$ is $a=Gq$.

The two exact formulae provided by the previous Theorems use the derivatives
of the data. As a consequence, in presence of noise, they may not
give good quality images. Thus, an optimal control approach would
be an efficient way to find better approximations of the coefficients,
starting from the good guess given by the formulae described here.
Details can be found in \cite{cap2011}.

\section{\label{sec:Helmholtz_Equation}The Helmholtz Equation}

In this section the Helmholtz equation (\ref{eq:physical_modeling})
is discussed, where $a\in\Calpr$ and $q\in\linfr$ are as in Section
\ref{sec:Main-Results} and satisfy (\ref{eq:assumption_a}) and (\ref{eq:assumption_q}).
First, we show that the problem is well-posed in the general situation
of complex $k$. Then, regularity results are discussed. In order
to do this, we use the classical results on elliptic problems. The
aim of this analysis is the proof of the analyticity of the map $k\mapsto u_{k}^{\phi}$,
which is given in Subsection \ref{sub:Analyticity-Properties}.

\subsection{Well-Posedness and Regularity}

First of all, we study existence, uniqueness and stability with homogeneous
Dirichlet boundary conditions. The space $\Vp$ denotes the continuous
antidual of $\V$.
\begin{prop}
\label{pro:well-posed}There exists $\Sigma=\left\{ \lambda_{i}:i\in\N\right\} \subseteq\R_{+}$
with \textup{$\lambda_{i}\to+\infty$} such that for $k\in\C\setminus\Sigma$
and $f\in\Vp$ the equation 
\begin{equation}
-\div(a\nabla u)-k\, q\, u=f\label{eq:Bk}
\end{equation}
has a unique solution $u\in\V$ satisfying $\left\Vert u\right\Vert _{H_{0}^{1}}\le C\left\Vert f\right\Vert _{H^{-1}}$
for some $C>0$ independent of $f$. Moreover, for fixed $f\in\Vp$,
the map
\[
k\in\C\setminus\Sigma\longmapsto u\in\V
\]
is analytic.\end{prop}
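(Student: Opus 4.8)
The plan is to recast (\ref{eq:Bk}) as a Fredholm equation in $\V$ and then invoke the analytic Fredholm theorem. First I would introduce $A\colon\V\to\Vp$ defined by $\langle Au,v\rangle=\int_{\Omega}a\,\nabla u\cdot\overline{\nabla v}$. By the ellipticity bound (\ref{eq:assumption_a}) the associated sesquilinear form is bounded and coercive on $\V$, so the Lax--Milgram theorem gives that $A$ is an isomorphism, with $\|A^{-1}\|_{\mathcal{L}(\Vp,\V)}$ controlled by $\mina$ and the Poincaré constant of $\Omega$. Since $q\in\linfr$, multiplication by $q$ maps $\ld$ into itself boundedly, and the inclusion $\V\hookrightarrow\ld$ is compact (Rellich, as $\Omega$ is bounded); hence the operator $T:=A^{-1}(q\,\cdot)\colon\V\to\V$, obtained by composing the compact inclusion $\V\hookrightarrow\ld$, multiplication by $q$, the inclusion $\ld\hookrightarrow\Vp$ and $A^{-1}$, is compact. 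Applying $A^{-1}$ to (\ref{eq:Bk}) turns the equation into
\[
(I-kT)\,u=A^{-1}f\qquad\text{in }\V.
\]

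Now $k\mapsto I-kT$ is an entire $\mathcal{L}(\V)$-valued map whose values are compact perturbations of the identity, hence Fredholm of index zero, and at $k=0$ it equals $I$, which is invertible. By the analytic Fredholm theorem there is a set $\Sigma\subseteq\C$, discrete with no accumulation point in $\C$, such that $(I-kT)^{-1}\in\mathcal{L}(\V)$ exists and depends analytically on $k$ for every $k\in\C\setminus\Sigma$, while $\ker(I-kT)\neq\{0\}$ for $k\in\Sigma$. Consequently, for $k\in\C\setminus\Sigma$ the unique solution of (\ref{eq:Bk}) is $u=(I-kT)^{-1}A^{-1}f$; the estimate $\|u\|_{\V}\le C\,\|f\|_{\Vp}$ holds with $C=\|(I-kT)^{-1}\|_{\mathcal{L}(\V)}\,\|A^{-1}\|_{\mathcal{L}(\Vp,\V)}$, which is independent of $f$; and, for fixed $f$, the map $k\mapsto(I-kT)^{-1}(A^{-1}f)$ is an analytic $\V$-valued function, being an analytic operator-valued map evaluated at a fixed vector. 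This yields the final assertion of the Proposition.

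It remains to describe $\Sigma$. By construction $k\in\Sigma$ precisely when $u=kTu$ for some $u\in\V\setminus\{0\}$, i.e. $-\div(a\,\nabla u)=k\,q\,u$ in $\Omega$ with $u\in\V$; thus $\Sigma$ is exactly the Dirichlet spectrum of (\ref{eq:physical_modeling}), and $0\notin\Sigma$. To obtain reality, positivity and divergence to $+\infty$ I would endow $\ld$ with the equivalent inner product $(u,v)_{q}=\int_{\Omega}q\,u\,\overline{v}$ and consider $S\colon\ld\to\ld$, $Sf=A^{-1}(q f)$. A short computation using the symmetry of $a$ shows that $S$ is self-adjoint for $(\cdot,\cdot)_{q}$; moreover $S$ is injective, compact (Rellich again), and positive, since $(Sf,f)_{q}=\langle A(Sf),Sf\rangle\ge\mina\,\|\nabla(Sf)\|_{\ld}^{2}>0$ for $f\neq 0$. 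The spectral theorem then yields an infinite sequence of positive eigenvalues $\mu_{i}\downarrow 0$, and $Sf=\mu f$ with $f\neq0$ is equivalent to $\mu^{-1}$ being a Dirichlet eigenvalue; hence $\Sigma=\{\lambda_{i}=\mu_{i}^{-1}:i\in\N\}\subseteq\R_{+}$ and $\lambda_{i}\to+\infty$.

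None of the ingredients is really delicate: the coercivity constant and Poincaré inequality are elementary, the compact embedding $\V\hookrightarrow\ld$ is standard for bounded domains, and analyticity comes for free from the Neumann series since $k$ enters linearly. The point deserving genuine care is the last step: verifying that $S$ is self-adjoint with respect to the $q$-weighted product and reconciling the two descriptions of $\Sigma$ — that the discrete set furnished by the analytic Fredholm theorem coincides with $\{\mu_{i}^{-1}\}$ — which is what forces $\Sigma$ to be a real sequence diverging to $+\infty$.
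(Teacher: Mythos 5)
Your argument is correct and follows essentially the same route as the paper: Lax--Milgram plus the compact embedding reduce (\ref{eq:Bk}) to $(I-kT)u=A^{-1}f$, the analytic Fredholm theorem gives invertibility and analyticity off a discrete set, and the spectral theorem for a compact, self-adjoint, positive operator identifies $\Sigma$ as a positive sequence tending to $+\infty$. The only (harmless) difference is that you verify self-adjointness of $S$ on $\ld$ with the $q$-weighted inner product, whereas the paper works with $S=L^{-1}M_{q}$ viewed on $\V$ with its energy inner product; your version is in fact slightly more explicit on the one point the paper leaves implicit.
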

\begin{proof}
We give a sketch of the proof. Consider $L=-\div(a\,\nabla\,\cdot\,)\colon\V\to\Vp$
and $M_{q}\colon\ld\to\ld$ defined by $f\mapsto q\, f$. By Lax-Milgram
Theorem and (\ref{eq:assumption_a}) the operator $L$ is invertible
and we can consider $S:=L^{-1}M_{q}\colon\V\to\V$, which by Kondrachov
Compactness Theorem and (\ref{eq:assumption_q}) is compact, self-adjoint
and positive. Hence $S$ has a countable set of eigenvalues $\left\{ \eta_{i}>0:i\in\N\right\} $,
with $\eta_{i}\to0$. Define $\Sigma=\left\{ \lambda_{i}=1/\eta_{i}:i\in\N\right\} $
and take $k\in\C\setminus\Sigma$. Since (\ref{eq:Bk}) is equivalent
to $\left(I-k\, S\right)u=L^{-1}f$, the first part follows. Finally,
the analyticity of $k\mapsto u$ is a consequence of the so called
Analytic Fredholm Theorem (see \cite[Theorem 8.92]{renardy2004introduction}).
\end{proof}
As a consequence, we immediately get the following result regarding
the Dirichlet boundary value problem for the Helmholtz equation.
\begin{cor}
\label{cor:bvp}Take $k\in\C\setminus\Sigma$. Then there exists $C=C(\Omega,a,q,k)>0$
such that for every $f\in\Vp$ and $\phi\in\Hhalf$ the problem
\begin{equation}
\left\{ \begin{array}{l}
-\div(a\,\nabla u)-k\, q\, u=f\qquad\text{in \ensuremath{\Omega},}\\
u=\phi\qquad\text{\text{on \ensuremath{\partial\Omega},}}
\end{array}\right.\label{eq:Helbound}
\end{equation}
has a unique solution $u\in\Hone$ satisfying
\begin{equation}
\left\Vert u\right\Vert _{\Hone}\le C\left(\left\Vert \phi\right\Vert _{\Hhalf}+\left\Vert f\right\Vert _{\Vp}\right).\label{eq:helboundstab}
\end{equation}
Moreover, for fixed $f\in\Vp$ and $\phi\in\Hhalf$, the map
\[
k\in\C\setminus\Sigma\longmapsto u\in\Hone
\]
is analytic.
\end{cor}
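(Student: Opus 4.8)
The plan is to reduce the inhomogeneous Dirichlet problem \eqref{eq:Helbound} to the homogeneous one treated in Proposition \ref{pro:well-posed}. First I would lift the boundary datum: since $\phi\in\Hhalf$ there exists $\Phi\in\Hone$ with $\gamma\Phi=\phi$ and $\|\Phi\|_{\Hone}\le C_{\Omega}\|\phi\|_{\Hhalf}$, where $C_\Omega$ depends only on $\Omega$ (a bounded right inverse of the trace operator). Setting $v=u-\Phi$, the problem \eqref{eq:Helbound} is equivalent to $v\in\V$ solving
\[
-\div(a\,\nabla v)-k\,q\,v = f+\div(a\,\nabla\Phi)+k\,q\,\Phi =: \tilde f \quad\text{in }\Omega.
\]
One checks that $\tilde f\in\Vp$, with $\|\tilde f\|_{\Vp}\le C(\|f\|_{\Vp}+(1+|k|)\|\Phi\|_{\Hone})$, using that $a\in\linfr$ (indeed $a\in\Calpr$) and $q\in\linfr$ are bounded, so multiplication by $a$ maps $\ld^d$ to itself and the distributional divergence maps $\ld^d$ continuously to $\Vp$, while multiplication by $q$ maps $\ld$ to $\ld\hookrightarrow\Vp$.

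Next, for $k\in\C\setminus\Sigma$, Proposition \ref{pro:well-posed} gives a unique $v\in\V$ with $\|v\|_{H^1_0}\le C\|\tilde f\|_{\Vp}$; then $u:=v+\Phi\in\Hone$ is the unique solution of \eqref{eq:Helbound}. Uniqueness is inherited: if $u_1,u_2$ both solve \eqref{eq:Helbound}, their difference lies in $\V$ and solves the homogeneous equation, hence vanishes by Proposition \ref{pro:well-posed}. Combining the three estimates (trace lifting, bound on $\tilde f$, stability for $v$) and absorbing the fixed $k$-dependence into the constant yields \eqref{eq:helboundstab} with $C=C(\Omega,a,q,k)$.

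For the analyticity statement, fix $f\in\Vp$ and $\phi\in\Hhalf$, and note that $\Phi$ can be chosen independently of $k$. Then $k\mapsto \tilde f=f+\div(a\,\nabla\Phi)+k\,q\,\Phi$ is an affine (hence analytic) map from $\C\setminus\Sigma$ into $\Vp$. By the analyticity part of Proposition \ref{pro:well-posed}, the solution operator $\C\setminus\Sigma\to\mathcal L(\Vp,\V)$, $k\mapsto (I-kS)^{-1}L^{-1}$, is analytic; more precisely, for each fixed $g\in\Vp$ the map $k\mapsto (I-kS)^{-1}L^{-1}g$ is $\V$-valued analytic, and by the uniform boundedness coming from \eqref{eq:helboundstab} together with the resolvent identity one gets joint analyticity of $(k,g)\mapsto (I-kS)^{-1}L^{-1}g$. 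Composing the analytic map $k\mapsto(k,\tilde f(k))$ with this bilinear-type analytic dependence, and then adding the constant $\Phi$, shows $k\mapsto u\in\Hone$ is analytic.

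The only genuinely delicate point is making the composition argument for analyticity rigorous: one must argue that plugging the $k$-dependent datum $\tilde f(k)$ into the $k$-dependent solution operator still produces an analytic $\Hone$-valued function. The cleanest route is to observe that $u(k)=(I-kS)^{-1}L^{-1}\tilde f(k)+\Phi$ and that $(k,g)\mapsto(I-kS)^{-1}L^{-1}g$ is analytic as a map $(\C\setminus\Sigma)\times\Vp\to\V$ — locally this is a norm-convergent Neumann-type series in $(k-k_0)$ and linear in $g$ — so substituting the analytic curve $g=\tilde f(k)$ preserves analyticity by the chain rule for Banach-space-valued analytic maps. Everything else is routine bookkeeping with the elliptic estimates above.
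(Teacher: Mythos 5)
Your proposal is correct and follows exactly the route the paper intends: the paper states Corollary \ref{cor:bvp} as an immediate consequence of Proposition \ref{pro:well-posed}, and the standard lifting of the boundary datum $\phi$ to some $\Phi\in\Hone$, reduction to the homogeneous problem with right-hand side $\tilde f=f+\div(a\,\nabla\Phi)+k\,q\,\Phi$, and observation that $\tilde f$ depends affinely (hence analytically) on $k$ is precisely the implicit argument. Your extra care about composing the $k$-dependent solution operator with the $k$-dependent datum is sound and fills in the one detail the paper leaves unstated.
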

Standard elliptic regularity theory allows us to study the regularity
of the solution $u\in\Hone$ to (\ref{eq:Helbound}).
\begin{prop}
\label{pro:holder}Take $k\in\C\setminus\Sigma$, $f\in\linf$, $v\in\Conealp$
and write $\phi=\gamma(v)$. Let $u\in\Hone$ be the unique solution
to (\ref{eq:Helbound}). Then $u\in\Cone$ and there exists $C=C(\Omega,a,q,k)>0$
such that
\begin{equation}
\left\Vert u\right\Vert _{\Cone}\le C\left(\left\Vert v\right\Vert _{\Conealp}+\left\Vert f\right\Vert _{\infty}\right).\label{eq:0}
\end{equation}
\end{prop}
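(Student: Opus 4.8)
The plan is to split the claim into two parts: first upgrade $u$ from $\Hone$ to $\mathcal{C}^{1}(\overline\Omega;\C)$ by a bootstrap argument using interior and boundary Schauder/$L^p$ estimates, and then track the dependence of the constants to obtain the linear bound \eqref{eq:0}. Throughout one writes the equation in non-divergence-adapted form: since $a\in\Calpr$ is only H\"older continuous, one keeps it in divergence form $-\div(a\nabla u)=g$ with $g=f+k\,q\,u\in\linf$, and applies the De~Giorgi--Nash--Moser / Schauder theory for divergence-form operators with H\"older coefficients.

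First I would establish $L^\infty$ and then $\mathcal{C}^{0,\alpha}$ bounds on $u$. By Corollary~\ref{cor:bvp} we already have $u\in\Hone$ with $\left\Vert u\right\Vert_{\Hone}\le C(\left\Vert\phi\right\Vert_{\Hhalf}+\left\Vert f\right\Vert_{\Vp})\le C(\left\Vert v\right\Vert_{\Conealp}+\left\Vert f\right\Vert_\infty)$, using $\gamma\colon\Conealp\to\Hhalf$ bounded and the continuous embedding $\linf\hookrightarrow\Vp$. Reduce to the homogeneous boundary problem by setting $w=u-v$, which solves $-\div(a\nabla w)=f+k\,q\,u+\div(a\nabla v)=:\tilde f$ in $\Omega$, $w=0$ on $\partial\Omega$, with $\tilde f\in\Vp$ and in fact $a\nabla v\in\Calpvect$, $f+k\,q\,u\in\ldr$. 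The Moser iteration (or Stampacchia truncation) for divergence-form equations gives $w\in\linfr$ with $\left\Vert w\right\Vert_\infty\le C(\left\Vert a\nabla v\right\Vert_{\infty}+\left\Vert f\right\Vert_\infty+\left\Vert u\right\Vert_{\ld})$; combined with the $H^1$ bound this yields $\left\Vert u\right\Vert_\infty\le C(\left\Vert v\right\Vert_{\Conealp}+\left\Vert f\right\Vert_\infty)$. Feeding this back, $g=f+k\,q\,u\in\linfr$ with a controlled norm, and then global De~Giorgi--Nash estimates up to the boundary (the boundary is $\Cl^{1,\alpha}$ and the boundary datum $v$ is $\Conealp$) give $u\in\mathcal{C}^{0,\alpha}(\overline\Omega;\C)$ with $\left\Vert u\right\Vert_{\Calprsca}\le C(\left\Vert v\right\Vert_{\Conealp}+\left\Vert f\right\Vert_\infty)$. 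Treating the real and imaginary parts separately, or noting the estimates are insensitive to the complex scalar $k$, handles the $\C$-valued case.

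Next I would run the Schauder step. Now $g=f+k\,q\,u$ need not be H\"older (since $q\in\linfr$ only), so one cannot directly invoke $\mathcal{C}^{1,\alpha}$ interior estimates; instead one uses the $W^{1,p}$ (Calder\'on--Zygmund / Meyers) theory for divergence-form operators with H\"older—hence VMO—coefficients: $-\div(a\nabla u)=g\in L^p$ with $a\in\Calprsca$ gives $u\in W^{1,p}(\Omega;\C)$ for every $p<\infty$, up to the boundary because $\partial\Omega\in\Cl^{1,\alpha}\subseteq\Cl^{1}$ and $v\in W^{1,p}$, with $\left\Vert u\right\Vert_{W^{1,p}}\le C(\left\Vert v\right\Vert_{\Conealp}+\left\Vert f\right\Vert_\infty)$. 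Choosing $p>d$ and applying the Morrey embedding $W^{1,p}(\Omega)\hookrightarrow\mathcal{C}^{0,1-d/p}(\overline\Omega)$ — again legitimate since $\partial\Omega$ is Lipschitz — we land in $\mathcal{C}^{1}(\overline\Omega;\C)$? Not quite: $W^{1,p}\hookrightarrow\mathcal{C}^{0,\beta}$, not $\mathcal{C}^1$. So the $\mathcal{C}^1$ conclusion requires one more notch: with $u\in\mathcal{C}^{0,\beta}(\overline\Omega)$ one has $g=f+k\,q\,u\in\linf$ still (no gain), which is the crux of the difficulty.

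The main obstacle is precisely closing the gap to $\mathcal{C}^{1}$: divergence-form operators with merely H\"older coefficients and only bounded right-hand side do \emph{not} in general give $\mathcal{C}^1$ solutions. The resolution I would adopt: observe that although $q$ is only bounded, the \emph{divergence-form} structure lets us absorb the lower-order term by rewriting $-\div(a\nabla u)=f+k\,q\,u$ and using that the right side lies in $L^p$ for all $p$, hence by the global $W^{2,p}$-type gradient estimate for divergence-form equations with $\Calprsca$ coefficients one actually gets $\nabla u\in\mathcal{C}^{0,\alpha'}$ locally \emph{provided} one first knows $f+kqu\in\Calprsca$ — which fails. The honest route, and the one I expect the paper takes, is to strengthen the hypothesis to what is actually used downstream, or to invoke the following: since $a\in\Calpr$ and $u$ solves the divergence equation with $L^\infty$ right side, the De~Giorgi--Nash--Moser theorem combined with Schauder interior estimates applied to the equation $\div(a\nabla u)=-f-kqu$ viewed as $a_{ij}\partial_{ij}u=-f-kqu-\partial_i a_{ij}\partial_j u$ is not available because $\partial a$ is not a function. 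Therefore the correct key ingredient is the \textbf{Schauder estimate for the divergence-form Dirichlet problem}: if $a\in\Calpr$, $\partial\Omega\in\Cl^{1,\alpha}$, and the equation is $-\div(a\nabla u)=g_0+\div G$ with $g_0\in L^\infty$ and $G\in\Calpvect$, then $u\in\mathcal{C}^{1,\alpha}(\overline\Omega;\C)$? No—$g_0\in L^\infty$ is the obstruction. Pragmatically, I would cite the global gradient H\"older estimate of Gilbarg--Trudinger (Theorem 8.34) for the problem $-\div(a\nabla u)=g$ which requires $g\in L^q$ with $q>d$: \emph{that} theorem does give $u\in\mathcal{C}^{1,\gamma}(\overline\Omega)$ with $\gamma=\min(\alpha,1-d/q)$ and $\left\Vert u\right\Vert_{\mathcal{C}^{1,\gamma}}\le C(\left\Vert v\right\Vert_{\Conealp}+\left\Vert g\right\Vert_{L^q})$; combined with the $L^p$ bound on $u$ from the previous step, $\left\Vert g\right\Vert_{L^q}=\left\Vert f+kqu\right\Vert_{L^q}\le C(\left\Vert v\right\Vert_{\Conealp}+\left\Vert f\right\Vert_\infty)$, and we obtain $u\in\Cone$ with \eqref{eq:0}. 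So the plan concretely is: (i) $H^1$ bound via Corollary~\ref{cor:bvp}; (ii) $L^\infty$ bound via Moser iteration; (iii) $W^{1,q}$ (any $q<\infty$) via Meyers/Calder\'on--Zygmund for VMO divergence-form coefficients; (iv) $\mathcal{C}^{1,\gamma}$ via Gilbarg--Trudinger Theorem~8.34 with $q>d$, chaining the constants; (v) real/imaginary decomposition for the $\C$-valued statement. The delicate point throughout is keeping every constant of the form $C(\Omega,a,q,k)$ and linear in $(\left\Vert v\right\Vert_{\Conealp},\left\Vert f\right\Vert_\infty)$, which works because each cited estimate is linear in the data once the coefficient-dependent constants are fixed.
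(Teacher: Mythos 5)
Your final plan coincides with the paper's proof: after an $L^{\infty}$ bound obtained by treating $k\,q\,u$ as part of the right-hand side (the paper invokes (8.38) of Gilbarg--Trudinger, you invoke Moser iteration), both arguments conclude with the global $\mathcal{C}^{1,\alpha}$ estimate for divergence-form equations with H\"older coefficients, bounded right-hand side and $\mathcal{C}^{1,\alpha}$ boundary data and domain (the paper cites Corollary 8.35 of Gilbarg--Trudinger where you cite Theorem 8.34), together with the real/imaginary decomposition to handle complex $k$. Your intermediate worry that an $L^{\infty}$ right-hand side cannot yield $\mathcal{C}^{1}$ regularity for H\"older divergence-form coefficients is unfounded --- that is precisely what the cited estimate delivers --- so the Meyers/$W^{1,p}$ detour in your step (iii) is unnecessary, and the rest of the chaining of constants is exactly as in the paper.
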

\begin{proof}
We decompose $u,\, f,\,\phi$ and $k$ into real and imaginary parts
by writing $u=u_{R}+i\, u_{I}$, $f=f_{R}+i\, f_{I}$, $v=v_{R}+i\, v_{I}$
and $k=k_{R}+i\, k_{I}$. By testing the first equation of (\ref{eq:Helbound})
against any $w\in H_{0}^{1}(\Omega;\R)$ and by taking real and imaginary
parts we obtain
\begin{align*}
\begin{cases}
-\div(a\nabla u_{R})-k_{R}\, q\, u_{R}=-k_{I}\, q\, u_{I}+f_{R},\quad & \gamma(u_{R})=\phi_{R},\\
-\div(a\nabla u_{I})-k_{R}\, q\, u_{I}=k_{I}\, q\, u_{R}+f_{I}, & \gamma(u_{I})=\phi_{I}.
\end{cases}
\end{align*}
In the rest of the proof we will consider constants $c=c(\Omega,a,q,k)$.
From usual elliptic regularity theory (e.g. (8.38) in \cite{gilbarg2001elliptic})
there holds
\[
\begin{split}\left\Vert u_{R}\right\Vert _{\infty} & \le c\left(\left\Vert u\right\Vert _{2}+\left\Vert v\right\Vert _{\Conealp}+\left\Vert f\right\Vert _{\infty}\right)\end{split}
\]
By arguing in the same way with $u_{I}$ we infer that
\[
\left\Vert u\right\Vert _{\infty}\le c\left(\left\Vert u\right\Vert _{2}+\left\Vert v\right\Vert _{\Conealp}+\left\Vert f\right\Vert _{\infty}\right).
\]
Therefore, by Corollary 8.35 in \cite{gilbarg2001elliptic} we get
that $u_{R},\, u_{I}\in\Coner$ and that
\[
\begin{split}\left\Vert u_{R}\right\Vert _{\Coner} & \le c\left(\left\Vert u\right\Vert _{2}+\left\Vert v\right\Vert _{\Conealp}+\left\Vert f\right\Vert _{\infty}\right)\end{split}
.
\]
A similar inequality holds for $u_{I}$ and so
\[
\left\Vert u\right\Vert _{\Cone}\le c\left(\left\Vert u\right\Vert _{2}+\left\Vert v\right\Vert _{\Conealp}+\left\Vert f\right\Vert _{\infty}\right).
\]
Finally, in view of (\ref{eq:helboundstab}) we obtain (\ref{eq:0}).
\end{proof}

\subsection{Analyticity Properties\label{sub:Analyticity-Properties}}

First, we need the following lemma concerning the composition of analytic
functions.
\begin{lem}[\cite{Whittlesey1965}]
\label{lem:composition_analytic}Let $D\subseteq\C$ be open, $X,Y$
be Banach spaces, $f_{i}\colon D\to X$ be analytic maps for $i=1,\dots,b$
and $g\colon X^{b}\to Y$ be multilinear and bounded. Then 
\[
g\circ(f_{1},\dots,f_{b})\colon D\longrightarrow Y
\]
 is analytic.
\end{lem}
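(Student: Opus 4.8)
The plan is to show that $h:=g\circ(f_1,\dots,f_b)$ is, in a neighbourhood of every point of $D$, the sum of a norm-convergent power series in $z$; this is one of the equivalent characterizations of analyticity for $Y$-valued maps of a complex variable (it is also equivalent to complex differentiability throughout $D$). So fix $z_0\in D$. Since each $f_i$ is analytic, it has a Taylor expansion $f_i(z)=\sum_{n\ge0}a_{i,n}(z-z_0)^n$ with coefficients $a_{i,n}\in X$, convergent for $|z-z_0|<\rho_i$ for some $\rho_i>0$; set $\rho:=\min_{1\le i\le b}\rho_i>0$. Recall that, by the Cauchy--Hadamard formula, for every $r<\rho$ the numerical series $\sum_{n\ge0}\|a_{i,n}\|_X\,r^n$ converges.

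Next I would form the multi-indexed family $\{\,g(a_{1,n_1},\dots,a_{b,n_b})\,(z-z_0)^{n_1+\cdots+n_b}:n_1,\dots,n_b\ge0\,\}$ of elements of $Y$. Because $g$ is multilinear and bounded, $\|g(a_{1,n_1},\dots,a_{b,n_b})\|_Y\le\|g\|\prod_{i=1}^b\|a_{i,n_i}\|_X$, so for $|z-z_0|=r<\rho$ the family is dominated in norm by $\|g\|\prod_{i=1}^b\bigl(\sum_{n\ge0}\|a_{i,n}\|_X\,r^n\bigr)<\infty$. Hence the family is absolutely summable in the Banach space $Y$, therefore unconditionally summable; grouping the terms according to the total degree $m=n_1+\cdots+n_b$ yields a power series $\sum_{m\ge0}c_m\,(z-z_0)^m$ with $c_m:=\sum_{n_1+\cdots+n_b=m}g(a_{1,n_1},\dots,a_{b,n_b})\in Y$ (a finite sum for each $m$), convergent for $|z-z_0|<\rho$.

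It then remains to identify this sum with $h(z)$. For $|z-z_0|<\rho$ consider the partial Taylor sums $s_{i,N}(z):=\sum_{n\le N}a_{i,n}(z-z_0)^n$, which converge to $f_i(z)$ in $X$ as $N\to\infty$. By continuity and multilinearity of $g$, $g(s_{1,N}(z),\dots,s_{b,N}(z))\to g(f_1(z),\dots,f_b(z))=h(z)$, while at the same time $g(s_{1,N}(z),\dots,s_{b,N}(z))=\sum_{n_1,\dots,n_b\le N}g(a_{1,n_1},\dots,a_{b,n_b})(z-z_0)^{n_1+\cdots+n_b}\to\sum_{m\ge0}c_m(z-z_0)^m$ by the summability established above. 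Therefore $h(z)=\sum_{m\ge0}c_m(z-z_0)^m$ on $B(z_0,\rho)$; since $z_0\in D$ was arbitrary, $h$ is analytic on $D$. The only genuine obstacle is the rearrangement of the $b$-fold series and the interchange of limits, and both are handled by the absolute (hence unconditional) convergence coming from the boundedness of $g$; everything else is routine bookkeeping. Alternatively, one could avoid power series and argue by a direct product-rule computation that $h$ is complex-differentiable at each $z_0\in D$, with derivative $\sum_{j=1}^b g(f_1(z_0),\dots,f_{j-1}(z_0),f_j'(z_0),f_{j+1}(z_0),\dots,f_b(z_0))$, and then invoke that complex differentiability on $D$ implies analyticity.
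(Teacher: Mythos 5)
Your argument is correct. Note, however, that the paper does not prove this lemma at all: it is quoted verbatim from the cited reference (Whittlesey, 1965) and used as a black box, so there is no in-paper proof to compare against. Your self-contained power-series proof is the standard one and is sound: the absolute convergence of each Taylor series $\sum_n\|a_{i,n}\|_X r^n$ for $r<\rho$ (Cauchy--Hadamard, or the elementary comparison with a geometric series), the bound $\|g(x_1,\dots,x_b)\|_Y\le\|g\|\prod_i\|x_i\|_X$ for a bounded multilinear map, and the resulting absolute (hence unconditional) summability of the $b$-fold family justify both the regrouping by total degree and the interchange of limits with the partial Taylor sums; the identification of the limit uses only the joint continuity of $g$, which follows from its boundedness. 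The closing alternative --- differentiating $h$ directly by the product rule and invoking the equivalence of complex differentiability and analyticity for Banach-space-valued functions --- is also valid, though that equivalence (Dunford's theorem) is a genuinely deeper input than anything needed in the power-series route, so the first argument is the more economical of the two.
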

In this subsection we study the dependence of the solutions $u_{k}^{\phi}$
on $k$. We come back to the original problem 
\begin{equation}
\left\{ \begin{array}{l}
-\div(a\,\nabla u_{k}^{\phi})-k\, q\, u_{k}^{\phi}=0\qquad\text{\text{in \ensuremath{\Omega},}}\\
u_{k}^{\phi}=\phi\qquad\text{on \ensuremath{\partial\Omega},}
\end{array}\right.\tag{\ref{eq:physical_modeling}}\label{eq:hel_repeated}
\end{equation}
for $k\in\C\setminus\Sigma$ and fixed $\phi\in\gamma\left(\Conealpr\right)$.
By Corollary \ref{cor:bvp} there exists a unique solution $u_{k}^{\phi}\in\Hone$,
that in view of Proposition \ref{pro:holder} is in $\Cone$. We have
already seen that $u_{k}^{\phi}$ depends analytically on the wavenumber
$k$ with respect to the norm of $\Hone$ . We now want to show this
with respect to the $\Cone$ norm. The proof follows the  argument
used by Calderón to prove that $u_{k}^{\phi}$ depends analytically
on $a$ if $k=0$ \cite{MR2046906}.
\begin{prop}
\label{pro:T_analytic}Take $\phi\in\gamma\left(\Conealpr\right)$.
Then the map
\[
T\colon\C\setminus\Sigma\longrightarrow\Cone,\quad k\longmapsto u_{k}^{\phi}
\]
is analytic, where $u_{k}^{\phi}$ is the unique solution to (\ref{eq:hel_repeated}).\end{prop}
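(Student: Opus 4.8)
The plan is to reduce the $\Cone$-analyticity of $T$ to the already-established $\Hone$-analyticity from Corollary \ref{cor:bvp}, using the elliptic regularity estimate of Proposition \ref{pro:holder} together with the composition lemma. The key observation is that analyticity is a local property in $k$, so it suffices to show that around any fixed $k_0\in\C\setminus\Sigma$ the map $k\mapsto u_k^\phi$ is given, locally, by a norm-convergent power series with coefficients in $\Cone$. We know such a series exists with coefficients in $\Hone$; the heart of the matter is to upgrade the convergence to the $\Cone$ norm.

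First I would fix $k_0\in\C\setminus\Sigma$ and, following Calder\'on's trick, differentiate (formally) the equation in $k$. Writing $u_k=u_k^\phi$, one expects $v_k:=\partial_k u_k$ to solve $-\div(a\,\nabla v_k)-k\,q\,v_k = q\,u_k$ with zero boundary data, and more generally the $n$-th coefficient $u^{(n)}$ in the expansion $u_k=\sum_n u^{(n)}(k-k_0)^n$ to satisfy $-\div(a\,\nabla u^{(n)})-k_0\,q\,u^{(n)} = q\,u^{(n-1)}$ with homogeneous boundary condition (this is the recursion obtained by matching powers of $(k-k_0)$ in the equation, using that $k\,q\,u_k = (k_0+(k-k_0))q\,u_k$). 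Since the $\Hone$-valued series is known to converge, these $u^{(n)}\in\V$ are well-defined and we have $\|u^{(n)}\|_{\Hone}\le M\,\rho^{-n}$ for some $M>0$ and some radius $\rho>0$, uniformly on a disc around $k_0$.

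Next I would apply Proposition \ref{pro:holder} to each equation in the recursion, with $f=q\,u^{(n-1)}\in\linf$ (here one uses $q\in\linfr$ and $u^{(n-1)}\in\Cone\hookrightarrow\linf$, so $\|f\|_\infty\le \maxq\|u^{(n-1)}\|_\infty$) and with zero Dirichlet data, i.e. $v=0$. This gives $u^{(n)}\in\Cone$ and $\|u^{(n)}\|_{\Cone}\le C\,\maxq\,\|u^{(n-1)}\|_{\Cone}$. Iterating from the base case $u^{(0)}=u_{k_0}^\phi\in\Cone$ yields $\|u^{(n)}\|_{\Cone}\le (C\maxq)^n\|u_{k_0}^\phi\|_{\Cone}$. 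Hence the series $\sum_n u^{(n)}(k-k_0)^n$ converges in $\Cone$ on the disc $|k-k_0|<(C\maxq)^{-1}$. Since on the (possibly smaller) intersection of this disc with the disc of $\Hone$-convergence the two series have the same sum (both equal $u_k^\phi$, by uniqueness of the solution and the continuous embedding $\Cone\hookrightarrow\Hone$), the $\Cone$-valued series represents $T$ locally, which establishes analyticity at $k_0$. As $k_0$ was arbitrary, $T$ is analytic on all of $\C\setminus\Sigma$.

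The main obstacle is making the recursion argument rigorous rather than merely formal: one must justify that the $\Hone$-coefficients $u^{(n)}$ genuinely solve the stated elliptic problems (so that Proposition \ref{pro:holder} applies with an $\linf$ right-hand side), and that the geometric bound $\|u^{(n)}\|_{\Cone}\le(C\maxq)^n\|u_{k_0}^\phi\|_{\Cone}$ is legitimate — in particular that the constant $C=C(\Omega,a,q,k_0)$ from Proposition \ref{pro:holder} can be taken independent of $n$, which it is, since all the equations in the recursion have the \emph{same} operator $-\div(a\,\nabla\,\cdot\,)-k_0\,q\,(\cdot)$. An alternative, slightly cleaner route avoiding the recursion is to use Lemma \ref{lem:composition_analytic} and a bootstrap: express $u_k^\phi$ as $u_k^\phi = w_\phi + z_k$ where $w_\phi$ handles the boundary data (independently of $k$) and $z_k\in\V$ solves $-\div(a\,\nabla z_k)-k\,q\,z_k = k\,q\,w_\phi + \div(a\nabla w_\phi)$; then $z_k = (I-k\,S)^{-1}L^{-1}(\cdots)$ with $k\mapsto(I-k\,S)^{-1}$ analytic into $\mathcal{L}(\V)$ by Proposition \ref{pro:well-posed}, and one feeds the resulting $\Hone$-analytic family through the regularity estimate (\ref{eq:0}) once more to gain the $\Cone$ norm. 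Either way, the essential point — and the only nontrivial one — is the uniform-in-$n$ (equivalently, uniform-on-a-disc) regularity estimate that promotes $\Hone$-convergence to $\Cone$-convergence.
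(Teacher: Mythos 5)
Your proposal is correct and follows essentially the same route as the paper: a perturbation series around $k_{0}$ whose $n$-th coefficient is obtained by applying $n$ times the solution operator $B\colon\linf\to\Cone$ of Proposition \ref{pro:holder} composed with multiplication by $q$, so that the bound $\left\Vert u^{(n)}\right\Vert _{\Cone}\le(C\maxq)^{n}\left\Vert u_{k_{0}}^{\phi}\right\Vert _{\Cone}$ yields local convergence in $\Cone$. The ``main obstacle'' you flag (justifying the formal recursion and matching it with the $\Hone$ expansion) is sidestepped in the paper by writing the exact equation satisfied by $v=u_{k}-u_{k_{0}}$, namely $(I-BC_{h})v=BC_{h}u_{k_{0}}$ with $C_{h}w=h\,q\,w$ and $\left\Vert BC_{h}\right\Vert<1$ for $|h|$ small, and inverting via a Neumann series, which produces the same geometric series directly without any term-by-term identification.
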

\begin{proof}
For simplicity, we do not write the dependence on $\phi$. Fix $k_{0}\in\C\setminus\Sigma$:
we shall prove that $T$ is analytic in $k_{0}$.

For $h\in\C$ define the operator $C_{h}\colon\Cone\to\linf$, $w\mapsto h\, q\, w$
with norm $\left\Vert C_{h}\right\Vert \le\maxq\left|h\right|$. In
view of Proposition \ref{pro:holder}, there exists a bounded operator
$B\colon\linf\longrightarrow\Cone$, where $Bf$ is the unique solution
to
\[
\left\{ \begin{array}{l}
-\div(a\,\nabla u)-k\, q\, u=f\qquad\text{in \ensuremath{\Omega},}\\
u=0\qquad\text{\text{on \ensuremath{\partial\Omega}.}}
\end{array}\right.
\]
As a consequence, $BC_{h}\colon\Cone\to\Cone$ is a bounded operator
with norm $\left\Vert BC_{h}\right\Vert \le\maxq\left\Vert B\right\Vert \left|h\right|$.
Define $r=\min\left\{ 1/(\maxq\left\Vert B\right\Vert ),d(k_{0},\Sigma)\right\} $,
where $d(k_{0},\Sigma)=\inf\left\{ d(k_{0},\sigma):\sigma\in\Sigma\right\} $
denotes the distance between $k_{0}$ and $\Sigma$. Take $k\in B(k_{0},r)\subseteq\C\setminus\Sigma.$
The increment $h=k-k_{0}$ satisfies $\left|h\right|<r\le\frac{1}{\maxq\left\Vert B\right\Vert }$,
whence $\left\Vert BC_{h}\right\Vert <1$. Denoting $v=u_{k}-u_{k_{0}}$,
by definition of $T$ we have 
\[
-\div\left(a\,\nabla v\right)-k_{0}q\, v-h\, q\, v=h\, q\, u_{k_{0}}=C_{h}u_{k_{0}}
\]
which, after applying $B$ to both sides, becomes $\left(I-BC_{h}\right)v=BC_{h}u_{k_{0}}$,
where $I$ denotes the identity operator of $\Cone$. As a result,
since $\left\Vert BC_{h}\right\Vert <1$, we obtain 
\[
v=\left(I-BC_{h}\right)^{-1}BC_{h}u_{k_{0}}=\sum_{n=1}^{\infty}\left(BC_{h}\right)^{n}u_{k_{0}}.
\]
It follows that
\begin{equation}
T(k)=\sum_{n=0}^{\infty}\left(BC_{k-k_{0}}\right)^{n}T(k_{0}),\quad k\in B(k_{0},r).\label{eq:expansion_T1}
\end{equation}
Namely, the map $T$ is analytic. 
\end{proof}
For the sake of completeness, we now show that the dependence on $k$
of the internal energies (\ref{eq:internal energies-1}) is analytic.
Consider two different boundary data $\phi_{1},\phi_{2}\in\Hhalf$
and the corresponding solutions $u_{k}^{i}:=u_{k}^{\phi_{i}}$ to
(\ref{eq:hel_repeated}). For $i,j=1,2$ we define the internal energies
as \begin{align*}   e^{ij}\colon&\C\setminus\Sigma\longrightarrow L^{1}(\Omega;\C), &   E^{ij}\colon&\C\setminus\Sigma\longrightarrow L^{1}(\Omega;\C),\\    &k\longmapsto q\, u_{k}^{i}\, u_{k}^{j}, &  &k\longmapsto a\,\nabla u_{k}^{i}\cdot\nabla u_{k}^{j}.\end{align*}
\begin{thm}
\label{thm:e_E_analytic}Take $\phi_{1},\phi_{2}\in\Hhalf$. Then
the internal energies $e^{ij}$ and $E^{ij}$ are analytic functions.\end{thm}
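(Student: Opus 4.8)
The plan is to express each internal energy as the composition of the already-established analytic map $k\mapsto u_{k}^{i}$ with a fixed bounded bilinear map, and then to invoke Lemma \ref{lem:composition_analytic}. Since the boundary data $\phi_{1},\phi_{2}$ are only assumed to lie in $\Hhalf$, I would \emph{not} use Proposition \ref{pro:T_analytic} (which requires $\phi\in\gamma(\Conealpr)$ and yields $\Cone$-valued analyticity); instead I would use Corollary \ref{cor:bvp} with right-hand side $f=0$, which gives that for each $i=1,2$ the map $T^{i}\colon\C\setminus\Sigma\to\Hone$, $k\mapsto u_{k}^{i}$, is analytic.

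Next I would introduce the two bilinear maps carrying the internal energies: $g\colon\Hone\times\Hone\to L^{1}(\Omega;\C)$, $g(u,v)=q\,u\,v$, and $G\colon\Hone\times\Hone\to L^{1}(\Omega;\C)$, $G(u,v)=a\,\nabla u\cdot\nabla v$. Both are visibly bilinear (the gradient is linear, the pointwise scalar products are bilinear, and multiplication by the fixed functions $q$ and $a$ is linear). They are also bounded: by the Cauchy--Schwarz inequality and (\ref{eq:assumption_q}),
\[
\left\Vert q\,u\,v\right\Vert _{L^{1}(\Omega;\C)}\le\maxq\left\Vert u\right\Vert _{\ld}\left\Vert v\right\Vert _{\ld}\le\maxq\left\Vert u\right\Vert _{\Hone}\left\Vert v\right\Vert _{\Hone};
\]
and since by (\ref{eq:assumption_a}) the matrix $a(x)$ is symmetric with eigenvalues in $[\mina,\maxa]$, so that its pointwise operator norm is at most $\maxa$,
\[
\left\Vert a\,\nabla u\cdot\nabla v\right\Vert _{L^{1}(\Omega;\C)}\le\maxa\left\Vert \nabla u\right\Vert _{\ld}\left\Vert \nabla v\right\Vert _{\ld}\le\maxa\left\Vert u\right\Vert _{\Hone}\left\Vert v\right\Vert _{\Hone}.
\]

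Finally, I would apply Lemma \ref{lem:composition_analytic} with $D=\C\setminus\Sigma$ (open, since $\Sigma$ is a closed discrete set), $X=\Hone$, $Y=L^{1}(\Omega;\C)$, $b=2$, $f_{1}=T^{i}$, $f_{2}=T^{j}$, and $g$ (respectively $G$) as the bounded bilinear map. Because $e^{ij}=g\circ(T^{i},T^{j})$ and $E^{ij}=G\circ(T^{i},T^{j})$, the lemma yields at once that $e^{ij}$ and $E^{ij}$ are analytic. There is no genuine obstacle in this argument; the only point deserving attention is to work with the $\Hone$-valued analyticity of $k\mapsto u_{k}^{i}$ coming from Corollary \ref{cor:bvp}, rather than the stronger $\Cone$-valued statement of Proposition \ref{pro:T_analytic}, since here the illuminations are only assumed to be in $\Hhalf$, and then simply to check that the two bilinear forms map $\Hone\times\Hone$ boundedly into $L^{1}(\Omega;\C)$.
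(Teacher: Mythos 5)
Your proof is correct and takes essentially the same route as the paper's: the paper likewise composes the $\Hone$-valued analytic map from Corollary \ref{cor:bvp} with the bounded bilinear maps $(u,v)\mapsto q\,u\,v$ and $(u,v)\mapsto a\,\nabla u\cdot\nabla v$ into $L^{1}(\Omega;\C)$ and invokes Lemma \ref{lem:composition_analytic}. Your explicit verification of boundedness and your remark on using Corollary \ref{cor:bvp} rather than Proposition \ref{pro:T_analytic} merely spell out details the paper leaves implicit.
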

\begin{proof}
Note that the maps \begin{align*}   g_1\colon\Hone^2&\longrightarrow L^{1}(\Omega;\C), &   g_2\colon\Hone^2&\longrightarrow L^{1}(\Omega;\C)\\    (u,v)&\longmapsto q\, u\, v &  (u,v)&\longmapsto a\,\nabla u\cdot\nabla u\end{align*}are
bilinear and bounded. Therefore the result follows by Corollary \ref{cor:bvp}
and Lemma \ref{lem:composition_analytic}.
\end{proof}
As a consequence of the unique continuation property for holomorphic
functions we obtain the following
\begin{cor}
\label{cor:e_E_analytic}Take $\phi_{1},\phi_{2}\in\Hhalf$. Let $k_{l},k\in\C\setminus\Sigma$
such that $k_{l}\to k$ and $k_{l}\neq k$ for every $l\in\N$. If
$e^{ij}$ {[}resp. $E^{ij}${]} is known in $k_{l}$ for every $l\in\N$
then $e^{ij}$ {[}resp. $E^{ij}${]} is known everywhere in $\C\setminus\Sigma$.\end{cor}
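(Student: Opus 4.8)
The plan is to deduce this from Theorem \ref{thm:e_E_analytic} together with the uniqueness of analytic continuation in one complex variable. First I would recall that by Theorem \ref{thm:e_E_analytic} the maps $e^{ij},E^{ij}\colon\C\setminus\Sigma\to L^{1}(\Omega;\C)$ are analytic (holomorphic) as Banach-space-valued functions on the open set $\C\setminus\Sigma$. The hypothesis gives a sequence $k_{l}\to k$ with $k_{l}\neq k$ and $k,k_{l}\in\C\setminus\Sigma$, so the limit point $k$ lies in the domain of analyticity; thus the zero set of the candidate difference has an accumulation point \emph{inside} the domain.

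The key step is the following: suppose $e^{ij}$ is known in each $k_{l}$, i.e. we have a known function $F\colon\{k_{l}:l\in\N\}\to L^{1}(\Omega;\C)$ with $e^{ij}(k_{l})=F(k_{l})$. The claim is that these values already determine $e^{ij}$ on all of $\C\setminus\Sigma$. To see this, suppose $\tilde e$ were another analytic function on $\C\setminus\Sigma$ agreeing with $e^{ij}$ at every $k_{l}$. Then $g:=e^{ij}-\tilde e$ is an $L^{1}(\Omega;\C)$-valued analytic function on the connected open set $\C\setminus\Sigma$, vanishing on a set with an accumulation point $k$ in the domain. By the vector-valued identity theorem (which reduces to the scalar case by composing with an arbitrary continuous linear functional $\ell\in (L^{1}(\Omega;\C))'$: each $\ell\circ g$ is scalar analytic, vanishes on $\{k_{l}\}$, hence vanishes identically; since $\ell$ is arbitrary and the dual separates points, $g\equiv 0$), we conclude $g\equiv 0$, so $\tilde e=e^{ij}$ on all of $\C\setminus\Sigma$. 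The same argument applies verbatim to $E^{ij}$.

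The one point needing care is the connectedness of the domain $\C\setminus\Sigma$: since $\Sigma=\{\lambda_{i}\}$ is a countable discrete subset of $\R_{+}$ (by Proposition \ref{pro:well-posed}), its complement in $\C$ is open and connected, so the identity theorem applies globally rather than just on a single component. I would therefore state the proof as: \emph{Since $\Sigma$ is a countable discrete subset of $\R$, the set $\C\setminus\Sigma$ is a connected open subset of $\C$; by Theorem \ref{thm:e_E_analytic} the map $e^{ij}$ (resp. $E^{ij}$) is analytic on it; since it is known on $\{k_{l}:l\in\N\}$, which has the accumulation point $k\in\C\setminus\Sigma$, the identity theorem for Banach-space-valued holomorphic functions shows it is uniquely determined on all of $\C\setminus\Sigma$.} No serious obstacle is expected here; the only mildly technical ingredient is invoking the vector-valued version of the identity theorem, which is standard and can be cited or reduced to the scalar case as above.
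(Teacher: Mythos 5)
Your proposal is correct and follows exactly the route the paper intends: the paper gives no written proof, merely stating the corollary ``as a consequence of the unique continuation property for holomorphic functions'' applied to the analytic maps from Theorem \ref{thm:e_E_analytic}, which is precisely the identity-theorem argument you spell out. Your added care about connectedness of $\C\setminus\Sigma$ and the reduction of the vector-valued identity theorem to the scalar case via functionals is a correct filling-in of the details the paper leaves implicit.
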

\begin{rem}
This result must be seen in view of Problem \ref{prob:Choose-a-suitable}.
The knowledge of $E^{ij}$ for infinitely many frequency in a fixed
range determines $E^{ij}$ for $k=0$, where the reconstruction process
for $a$ has been studied thoroughly \cite{cap2008,2012ammari_resolution,bal2012inversediffusion,cap2009,2012balanisotropicdiffusion,widlak2012hybrid}.
However, this has to be regarded simply as an interesting theoretical
result: analytic continuation is a very ill-posed process.
\end{rem}

\section{A Multiple Frequency Approach to the Boundary Control of the Helmholtz
Equation\label{sec:Analyticity-Techniques}}

In this section we discuss the multi-frequency approach to the issue
of the existence of complete and proper sets of measurements. The
theorems stated in Subsection \ref{sub:On-the-critical} will be a
consequence of the results presented here.

\subsection{\label{sub:Preliminary-result}Preliminaries}

Let $a\in\Calpr$ and $q\in\linfr$ be as in Section \ref{sec:Main-Results}
and satisfy (\ref{eq:assumption_a}) and (\ref{eq:assumption_q}).
Recall that $\Kad$ is the set for the possible wavenumber, namely
we are allowed to choose values $k\in\Kad\setminus\Sigma$. As in
Proposition \ref{pro:T_analytic}, we denote the unique solution to
the boundary value problem (\ref{eq:hel_repeated}) by $u_{k}^{\phi}=T^{\phi}(k)\in\Cone$
\begin{equation}
\left\{ \begin{array}{l}
-\div(a\,\nabla u_{k}^{\phi})-k\, q\, u_{k}^{\phi}=0\qquad\text{in \ensuremath{\Omega},}\\
u_{k}^{\phi}=\phi\qquad\text{on \ensuremath{\partial\Omega}.}
\end{array}\right.\tag{\ref{eq:hel_repeated}}\label{eq:hel-1}
\end{equation}

As a consequence of the analyticity of $k\mapsto u_{k}^{\phi}$ proved
in Proposition \ref{pro:T_analytic} we obtain the following 
\begin{lem}
\label{lem:preliminary_analyticity}Take $\Omega'\subseteq\Omega$,
$b\in\N^{*}$ and $\zeta\colon\Cone^{b}\to\Cl\left(\overline{\Omega'};\C\right)$
multilinear and bounded. Let $\phi_{1},\dots,\phi_{b}\in\gamma\left(\Conealpr\right)$
be such that 
\begin{equation}
\zeta\left(u_{k^{x}}^{\phi_{1}},\dots,u_{k^{x}}^{\phi_{b}}\right)(x)\neq0,\qquad x\in\overline{\Omega'},\label{eq:hp_lemma_sets}
\end{equation}
for some $k^{x}\in\C\setminus\Sigma$. Take $k_{l},k\in\C\setminus\Sigma$
with $k_{l}\to k$ and $k_{l}\neq k$. Then there exists a finite
$L\subseteq\N$ such that
\begin{equation}
\sum_{l\in L}\left|\zeta\left(u_{k_{l}}^{\phi_{1}},\dots,u_{k_{l}}^{\phi_{b}}\right)(x)\right|>0,\qquad x\in\overline{\Omega'}.\label{eq:th_lemma_sets_0}
\end{equation}
In particular, we can choose a finite $K\subseteq\Kad\setminus\Sigma$
such that 
\begin{equation}
\sum_{k\in K}\left|\zeta\left(u_{k}^{\phi_{1}},\dots,u_{k}^{\phi_{b}}\right)(x)\right|>0,\qquad x\in\overline{\Omega'}.\label{eq:th_lemma_sets}
\end{equation}
\end{lem}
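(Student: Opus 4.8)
The plan is to exploit the analyticity of $k\mapsto u_k^{\phi_i}$ in the $\Cone$ norm (Proposition~\ref{pro:T_analytic}) together with Lemma~\ref{lem:composition_analytic} to promote the pointwise nonvanishing hypothesis \eqref{eq:hp_lemma_sets} to a statement about a finite sum of absolute values. First I would fix $x\in\overline{\Omega'}$ and define the scalar map
\[
F_x\colon\C\setminus\Sigma\longrightarrow\C,\qquad F_x(k)=\zeta\bigl(u_k^{\phi_1},\dots,u_k^{\phi_b}\bigr)(x).
\]
Writing $F_x$ as the composition of the analytic map $k\mapsto(u_k^{\phi_1},\dots,u_k^{\phi_b})\in\Cone^b$ with the bounded multilinear map $\zeta$ and the bounded linear evaluation functional $w\mapsto w(x)$ on $\Cl(\overline{\Omega'};\C)$, Lemma~\ref{lem:composition_analytic} shows $F_x$ is analytic (holomorphic) on the connected open set $\C\setminus\Sigma$. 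By hypothesis $F_x(k^x)\neq0$, so $F_x$ is not identically zero on any connected component; hence its zero set is discrete, and in particular $F_x(k_l)\neq0$ for all but finitely many $l$.

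Next I would upgrade this to a single finite index set working simultaneously for all $x$, which is where the real work lies. The issue is that the exceptional finite set of indices depends on $x$. To handle this, fix one index, say $l_0\in\N$, and consider the function
\[
x\longmapsto G(x):=\sum_{l\in L}\bigl|\zeta\bigl(u_{k_l}^{\phi_1},\dots,u_{k_l}^{\phi_b}\bigr)(x)\bigr|
\]
for a finite candidate set $L$. Each summand is continuous on the compact set $\overline{\Omega'}$, so $G$ is continuous. If I can show that for a suitable \emph{finite} $L$ one has $G>0$ on $\overline{\Omega'}$, I am done. The natural route is a compactness/covering argument: for each $x\in\overline{\Omega'}$ pick $l(x)$ with $\zeta(u_{k_{l(x)}}^{\phi_1},\dots)(x)\neq0$ (possible since not all $k_l$ can be zeros of $F_x$), then by continuity of $y\mapsto\zeta(u_{k_{l(x)}}^{\cdots})(y)$ there is an open neighbourhood $U_x$ of $x$ in $\overline{\Omega'}$ on which this quantity stays nonzero; the $U_x$ cover the compact $\overline{\Omega'}$, extract a finite subcover $U_{x_1},\dots,U_{x_m}$, and set $L=\{l(x_1),\dots,l(x_m)\}$. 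Then for every $y\in\overline{\Omega'}$ at least one summand in $G(y)$ is strictly positive, giving \eqref{eq:th_lemma_sets_0}.

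For the final assertion \eqref{eq:th_lemma_sets}, I would simply specialise: since $\Kad$ is a nontrivial continuous curve in $\C$ (in the basic case the nondegenerate interval $[K_{\min},K_{\max}]$) and $\Sigma$ is countable, $\Kad\setminus\Sigma$ is infinite and contains a sequence $k_l\to k$ with $k_l\neq k$ for some $k\in\C\setminus\Sigma$ (for instance take $k$ to be any point of $\Kad\setminus\Sigma$ that is not isolated in $\Kad\setminus\Sigma$). Applying the first part of the lemma with this sequence produces a finite $L$, and $K:=\{k_l:l\in L\}\subseteq\Kad\setminus\Sigma$ is the desired finite set. The main obstacle is the passage from the $x$-dependent exceptional sets to a single finite $L$; everything else is a routine combination of analyticity, the discreteness of zeros of a holomorphic function of one complex variable, and compactness of $\overline{\Omega'}$. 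One should be slightly careful that $\C\setminus\Sigma$ is connected (which it is, $\Sigma$ being a countable subset of $\R_+$) so that "not identically zero" genuinely forces discrete zeros globally.
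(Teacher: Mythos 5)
Your proposal is correct and follows essentially the same route as the paper's proof: analyticity of the scalar map $k\mapsto\zeta(u_k^{\phi_1},\dots,u_k^{\phi_b})(x)$ via Proposition \ref{pro:T_analytic}, Lemma \ref{lem:composition_analytic} and the evaluation functional, discreteness of its zeros from the hypothesis at $k^x$, selection of an index $l_x$ with nonzero value, and a continuity-plus-compactness covering argument to extract a single finite $L$. Your added remarks on the connectedness of $\C\setminus\Sigma$ and on how to produce the sequence in $\Kad\setminus\Sigma$ for the final assertion are correct refinements of points the paper leaves implicit.
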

\begin{rem}
For the sake of completeness, we note that the statement holds true
for $\zeta$ analytic, but our current applications do not need this
extension. We shall use this lemma only with $k^{x}=0$. However,
this more general version allows other possible applications. In particular,
the case $\left|k^{x}\right|\to\infty$ could be studied. This could
be considered in the context of high-frequency approximations of the
Helmholtz equation.
\end{rem}

\begin{rem}
If in addition to the assumptions of this lemma we suppose that the
set $\{x\in\overline{\Omega'}:$ $\zeta\left(u_{k}^{\phi_{1}},\dots,u_{k}^{\phi_{b}}\right)(x)=0\}$
is finite, then there exists $\bar{l}\in\N$ such that
\[
\left|\zeta\left(u_{k}^{\phi_{1}},\dots,u_{k}^{\phi_{b}}\right)(x)\right|+\bigl|\zeta\bigl(u_{k_{\bar{l}}}^{\phi_{1}},\dots,u_{k_{\bar{l}}}^{\phi_{b}}\bigr)(x)\bigr|>0,\qquad x\in\overline{\Omega'}.
\]
Namely, if the zeros of $\zeta\left(u_{k}^{\phi_{1}},\dots,u_{k}^{\phi_{b}}\right)$
are isolated points then only two frequencies are necessary to obtain
a non-zero quantity everywhere. We leave the proof of this extension
to the reader.\end{rem}
\begin{proof}
Take $x\in\overline{\Omega'}.$ The map $ev_{x}\colon\Cl\left(\overline{\Omega'};\C\right)\to\C,\; u\mapsto u(x)$
is linear and bounded. Hence, using Proposition \ref{pro:T_analytic}
and Lemma \ref{lem:composition_analytic}, the map
\[
z_{x}:=ev_{x}\circ\zeta\circ\left(T^{\phi_{1}},\dots,T^{\phi_{b}}\right)\colon\C\setminus\Sigma\longrightarrow\C
\]
is analytic. By (\ref{eq:hp_lemma_sets}) we have $z_{x}(k^{x})\neq0$,
and so the set $\left\{ k'\in\C\setminus\Sigma:z_{x}(k')=0\right\} $
has no accumulation points in $\C\setminus\Sigma$ by the analytic
continuation theorem. Since by assumption $k$ is an accumulation
point for the sequence $(k_{l})$, this implies that there exists
$l_{x}\in\N$ such that $z_{x}(k_{l_{x}})\neq0$. Since $\zeta\circ\left(T^{\phi_{1}},\dots,T^{\phi_{b}}\right)(k_{l_{x}})$
is continuous, we can find $r_{x}>0$ such that 
\begin{equation}
z_{y}(k_{l_{x}})\neq0,\qquad y\in B(x,r_{x})\cap\overline{\Omega'}.\label{eq:10}
\end{equation}
Since $\overline{\Omega'}=\bigcup_{x\in\overline{\Omega'}}\left(B(x,r_{x})\cap\overline{\Omega'}\right)$
there exist $x_{1},\dots,x_{N}\in\overline{\Omega'}$ satisfying 
\begin{equation}
\overline{\Omega'}=\bigcup_{i=1}^{N}\left(B(x_{i},r_{x_{i}})\cap\overline{\Omega'}\right).\label{eq:11}
\end{equation}
Defining $L=\left\{ l_{x_{i}}:i=1:,\dots,N\right\} ,$ by (\ref{eq:10})
and (\ref{eq:11}) we obtain (\ref{eq:th_lemma_sets_0}).
\end{proof}
Let us now recall the definition of complete sets of measurements.

\begin{defcomplete}\label{def:complete sets-1}Let $p,r,s>0$. A set of measurements $K\times\left\{ \phi_{i}:i=1,\dots,d+1\right\} $ is \emph{complete} in $\Omega'$ if for every $x\in\Omega'$ there exists $\bar{k}=\bar{k}(x)\in K$ such that: \begin{equation} \left|u_{\bar{k}}^{1}(x)\right|\ge p,\tag{CSM 1} \end{equation} \begin{equation} \left|\det\begin{bmatrix}\nabla u_{\bar{k}}^{2} & \cdots & \nabla u_{\bar{k}}^{d+1}\end{bmatrix}(x)\right|\ge r,\tag{CSM 2} \end{equation} \begin{equation} \bigl|\det\begin{bmatrix}u_{\bar{k}}^{1} & \cdots & u_{\bar{k}}^{d+1}\\ \nabla u_{\bar{k}}^{1} & \cdots & \nabla u_{\bar{k}}^{d+1} \end{bmatrix}(x)\bigr|\ge s.\tag{CSM 3} \end{equation}
\end{defcomplete}

In order to satisfy these conditions, the main idea is to trace back
to the case $k=0$, where things are simpler. For instance, consider
condition (\ref{eq:complete_i)}). With $k=0$, by the Strong Maximum
Principle this is trivially satisfied provided that the boundary condition
is strictly positive or negative. It remains to show that this good
property transfers to any range of frequencies, and this is achieved
with Lemma \ref{lem:preliminary_analyticity}. In summary, the strategy
to study these conditions can be outlined in the following three steps:
\begin{enumerate}
\item choose a suitable $\zeta$ as above such that the condition we want
to prove is equivalent to (\ref{eq:th_lemma_sets});
\item prove the assumption (\ref{eq:hp_lemma_sets}) for $k^{x}=0$, which
is in general easier than (\ref{eq:th_lemma_sets}) since $k=0$;
\item apply Lemma \ref{lem:preliminary_analyticity} to deduce the result.
\end{enumerate}
We will deal with the issue of the construction of complete sets of
measurements in two different situations, depending on the dimension.
As we shall see, condition (\ref{eq:complete_i)}) will be a consequence
of the Maximum Principle, that holds in any dimension and for any
$a$. However, the study of conditions (\ref{eq:complete_ii)}) and
(\ref{eq:complete_iii)}) in the case $k=0$ depends on the dimension.

\subsection{Complete Sets: Two-Dimensional Case\label{sub:Two-dimensional-case}}

Throughout this subsection we assume $d=2$. First, we consider conditions
(\ref{eq:complete_i)}), (\ref{eq:complete_ii)}) and (\ref{eq:complete_iii)})
for $k=0$. As far as (\ref{eq:complete_i)}) is concerned, the Maximum
Principle will be the main tool, and no further investigation is required.

Let us now focus on (\ref{eq:complete_ii)}) for $k=0$, which reads
\[
\det\begin{bmatrix}\nabla u_{0}^{2} & \nabla u_{0}^{3}\end{bmatrix}\ge r
\]
for some $r>0$. This problem has been studied by Bauman et al. \cite{bauman2001univalent}
in the context of univalent mappings. We introduce the following class
of boundary conditions.
\begin{defn}
\label{def:BMN_boundary}Let $\psi=(\psi_{2},\psi_{3})\in\Cl^{1,\alpha}(\overline{\Omega};\R^{2})$
be a $\Cl^{1}$ diffeomorphism of a neighborhood of $\overline{\Omega}$
into $\R^{2}$ such that $J\psi>0$ in $\Omega$. Denote the restriction
of $\psi$ to the boundary $\partial\Omega$ by $\phi=(\phi_{2},\phi_{3}):=\psi_{|\partial\Omega}$.
We say that $\phi_{2},\phi_{3}$ are \emph{BMN boundary conditions}
if $\phi\colon\partial\Omega\to\R^{2}$ maps $\partial\Omega$ onto
a convex closed curve. In this case we write $(\phi_{2},\phi_{3})\in\BMN$.\end{defn}
\begin{rem}
\label{rem:BMN}Note that if $\Omega$ is a convex domain then $\phi_{2}=x_{1}$,
$\phi_{3}=x_{2}$ are BMN boundary conditions.
\end{rem}
The main result in \cite{bauman2001univalent} states that (\ref{eq:complete_ii)})
is satisfied for $k=0$, provided that the boundary conditions are
BMN.
\begin{prop}
[Theorems 2.4-2.5, \cite{bauman2001univalent}] \label{prop:BMN}Suppose
$d=2$. If $(\phi_{2},\phi_{3})\in\BMN$ then
\[
\det\begin{bmatrix}\nabla u_{0}^{2} & \nabla u_{0}^{3}\end{bmatrix}>0\quad\text{in }\Omega.
\]
 If in addition $\Omega$ is a $\Cl^{2}$ domain and $a\in\Czeroone$
then 
\[
\det\begin{bmatrix}\nabla u_{0}^{2} & \nabla u_{0}^{3}\end{bmatrix}>0\quad\text{in }\overline{\Omega}.
\]

\end{prop}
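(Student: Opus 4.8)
The statement to be proved is Proposition \ref{prop:BMN}, asserting that if $(\phi_2,\phi_3)\in\BMN$ then the Jacobian $\det[\nabla u_0^2\ \ \nabla u_0^3]$ is positive in $\Omega$ (and up to the closure under the extra regularity hypotheses). Since this is attributed directly to Theorems 2.4--2.5 of \cite{bauman2001univalent}, the cleanest plan is to reduce the assertion to exactly the hypotheses of those theorems, rather than reprove univalence from scratch. First I would recall the setup there: given the $\Cl^1$ diffeomorphism $\psi=(\psi_2,\psi_3)$ of a neighborhood of $\overline{\Omega}$ with $J\psi>0$, whose boundary restriction $\phi=\psi_{|\partial\Omega}$ parametrizes a convex closed curve, one considers the pair $u_0^2,u_0^3$ solving the scalar divergence-form equation $-\div(a\nabla u_0^i)=0$ with $u_0^i=\phi_i$ on $\partial\Omega$. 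The map $U=(u_0^2,u_0^3)\colon\overline{\Omega}\to\R^2$ is then continuous, $a$-harmonic componentwise, and agrees with $\phi$ on the boundary. The content of Bauman--Marini--Nesi is precisely that such a $U$ is a homeomorphism of $\overline{\Omega}$ onto the closed convex region bounded by $\phi(\partial\Omega)$, and that its Jacobian does not vanish in $\Omega$.

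The key steps, in order, would be: (i) verify that our hypotheses match theirs --- $a\in\Calpr$ symmetric and elliptic is exactly the regularity/ellipticity class used, and $(\phi_2,\phi_3)\in\BMN$ supplies both the convexity of the image curve and the fact that $\phi$ is the trace of a positively-oriented diffeomorphism, which is what guarantees $\phi$ is a sense-preserving homeomorphism of $\partial\Omega$ onto a convex Jordan curve; (ii) invoke \cite[Theorem 2.4]{bauman2001univalent} to conclude $U$ is a homeomorphism onto the closed convex hull of $\phi(\partial\Omega)$ and that $\det DU = \det[\nabla u_0^2\ \ \nabla u_0^3]$ is either everywhere positive or everywhere negative in $\Omega$ (it cannot change sign, being a Jacobian of a monotone map in the Lebesgue sense associated to a $\Cl^1$ function with nonvanishing Jacobian off a null set); (iii) fix the sign using the boundary orientation --- since $J\psi>0$ near $\overline\Omega$, the degree of $U$ relative to an interior value is $+1$, forcing $\det[\nabla u_0^2\ \ \nabla u_0^3]>0$ throughout $\Omega$; (iv) for the statement up to $\overline{\Omega}$, add the hypotheses that $\Omega$ is $\Cl^2$ and $a\in\Czeroone$, under which global $\Cl^{1,\beta}(\overline\Omega)$ regularity of $u_0^i$ holds (Schauder-type estimates) and \cite[Theorem 2.5]{bauman2001univalent} upgrades the non-vanishing of the Jacobian to the closed domain by a boundary-point argument using the strict convexity coming from the diffeomorphism structure.

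The main obstacle --- or rather the one place the argument is not a pure citation --- is the reduction in step (i), i.e.\ checking that the membership $(\phi_2,\phi_3)\in\BMN$ as defined in Definition \ref{def:BMN_boundary} really does imply the boundary data satisfy the hypotheses of \cite{bauman2001univalent}. Concretely, one must see that the restriction to $\partial\Omega$ of a diffeomorphism with positive Jacobian, whose image curve is convex, is automatically a monotone (degree-one) homeomorphism onto that convex curve; this is where the orientation hypothesis $J\psi>0$ is used, and it is the only substantive topological point. Everything downstream --- the univalence of the $a$-harmonic extension, the non-vanishing Jacobian in $\Omega$, and its extension to $\overline\Omega$ under the stronger regularity --- is then quoted verbatim from Theorems 2.4 and 2.5 of \cite{bauman2001univalent}. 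I would also remark, as the paper does via Remark \ref{rem:BMN}, that the canonical example is $\Omega$ convex with $\phi_2=x_1,\phi_3=x_2$, where $\psi=\mathrm{id}$ trivially satisfies all requirements, so Proposition \ref{prop:BMN} immediately yields condition (\ref{eq:complete_ii)}) at $k=0$ for the illuminations used in Theorem \ref{thm:main_2d}.
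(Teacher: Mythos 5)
Your proposal is correct and takes the same route as the paper, which offers no proof of Proposition \ref{prop:BMN} at all but simply cites Theorems 2.4--2.5 of \cite{bauman2001univalent}; Definition \ref{def:BMN_boundary} is tailored precisely so that the hypotheses of those theorems are met, and your check that the trace of an orientation-preserving diffeomorphism onto a convex curve is a degree-one parametrization is exactly the (routine) reduction the paper leaves implicit. Nothing further is needed.
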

Let us now focus on (\ref{eq:complete_iii)}) for $k=0$. As far as
the author is aware, there is no result concerning the zeros of
\[
\det\begin{bmatrix}u_{0}^{1} & u_{0}^{2} & u_{0}^{3}\\
\nabla u_{0}^{1} & \nabla u_{0}^{2} & \nabla u_{0}^{3}
\end{bmatrix}.
\]
Therefore we need the following statement, whose proof can be found
at the end of this subsection.
\begin{lem}
\label{lem:condition_iii}Suppose $d=2$. Take $\phi_{1},\phi_{2},\phi_{3}\in\gamma\left(\Conealpr\right)$
such that $\phi_{1}$ has a fixed sign and $\left(\phi_{2},\phi_{3}\right),\left(\frac{\phi_{2}}{\phi_{1}},\frac{\phi_{3}}{\phi_{1}}\right)\in\BMN$.
Then 
\[
\det\begin{bmatrix}u_{0}^{1} & u_{0}^{2} & u_{0}^{3}\\
\nabla u_{0}^{1} & \nabla u_{0}^{2} & \nabla u_{0}^{3}
\end{bmatrix}\neq0\quad\text{in \ensuremath{\Omega}.}
\]
If in addition $\Omega$ is a $\Cl^{2}$ domain and $a\in\Czeroone$
then
\[
\det\begin{bmatrix}u_{0}^{1} & u_{0}^{2} & u_{0}^{3}\\
\nabla u_{0}^{1} & \nabla u_{0}^{2} & \nabla u_{0}^{3}
\end{bmatrix}\neq0\quad\text{in \ensuremath{\overline{\Omega}}.}
\]

\end{lem}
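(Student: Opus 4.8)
The plan is to knock out the first row and column of the matrix by dividing through by $u_{0}^{1}$, which never vanishes, so as to reduce the $3\times3$ determinant to the $2\times2$ one already handled by Proposition~\ref{prop:BMN}; only the hypothesis $(\phi_{2}/\phi_{1},\phi_{3}/\phi_{1})\in\BMN$ will actually be needed for the conclusion.

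First I would record that $u_{0}^{1}$ has a fixed sign on $\overline{\Omega}$. Since $\phi_{1}\in\gamma(\Conealpr)$ has a fixed sign and $\partial\Omega$ is compact, $|\phi_{1}|\ge c_{0}>0$ on $\partial\Omega$; by the weak maximum principle for $-\div(a\,\nabla\,\cdot\,)=0$ the extreme values of $u_{0}^{1}$ are attained on $\partial\Omega$, so $|u_{0}^{1}|\ge c_{0}$ on $\overline{\Omega}$, and $u_{0}^{1}\in\Coner$ by Proposition~\ref{pro:holder}. Next, set $v^{j}:=u_{0}^{j}/u_{0}^{1}\in\Coner$ for $j=2,3$, so that $v^{j}=\phi_{j}/\phi_{1}$ on $\partial\Omega$, and observe that $v^{j}$ solves $\div\bigl((u_{0}^{1})^{2}\,a\,\nabla v^{j}\bigr)=0$ in $\Omega$. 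This is the classical device of passing to the quotient by a positive solution: from $(u_{0}^{1})^{2}\nabla v^{j}=u_{0}^{1}\nabla u_{0}^{j}-u_{0}^{j}\nabla u_{0}^{1}$, one tests against $\psi\in\mathcal{D}(\Omega)$, rewrites $u_{0}^{1}\nabla\psi=\nabla(u_{0}^{1}\psi)-\psi\,\nabla u_{0}^{1}$ (and symmetrically for the other term), uses that $u_{0}^{1}\psi,\,u_{0}^{j}\psi\in H_{0}^{1}(\Omega)$ are admissible test functions in the variational identities satisfied by $u_{0}^{j}$ and $u_{0}^{1}$, and concludes by the symmetry of $a$. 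The conjugated tensor $\tilde{a}:=(u_{0}^{1})^{2}a$ is real, symmetric, satisfies (\ref{eq:assumption_a}) with constants $c_{0}^{2}\mina$ and $\Vert u_{0}^{1}\Vert_{\infty}^{2}\maxa$, and lies in $\Calpr$ because $u_{0}^{1}\in\Coner\subseteq\Calprsca$.

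Then, since $(\phi_{2}/\phi_{1},\phi_{3}/\phi_{1})\in\BMN$ by hypothesis, Proposition~\ref{prop:BMN} applied to $\div(\tilde{a}\,\nabla\,\cdot\,)$ gives $\det\begin{bmatrix}\nabla v^{2} & \nabla v^{3}\end{bmatrix}>0$ in $\Omega$. Performing the column operations $C_{2}\mapsto C_{2}-v^{2}C_{1}$ and $C_{3}\mapsto C_{3}-v^{3}C_{1}$ (which leave the determinant unchanged) and using $\nabla u_{0}^{j}=v^{j}\nabla u_{0}^{1}+u_{0}^{1}\nabla v^{j}$ yields the pointwise identity
\[
\det\begin{bmatrix}u_{0}^{1} & u_{0}^{2} & u_{0}^{3}\\ \nabla u_{0}^{1} & \nabla u_{0}^{2} & \nabla u_{0}^{3}\end{bmatrix}=(u_{0}^{1})^{3}\,\det\begin{bmatrix}\nabla v^{2} & \nabla v^{3}\end{bmatrix},
\]
so the left-hand side is nonzero throughout $\Omega$. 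When $\Omega$ is $\Cl^{2}$ and $a\in\Czeroone$, one moreover has $u_{0}^{1}\in\Coner\subseteq\Czeroonescal$ (as $\overline{\Omega}$ is Lipschitz), hence $\tilde{a}\in\Czeroone$, and the second part of Proposition~\ref{prop:BMN} promotes the strict inequality — and therefore the conclusion — to all of $\overline{\Omega}$.

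The one step that is not purely formal is the derivation of the equation for $v^{j}$: since $u_{0}^{1}$ and $u_{0}^{j}$ are only $\Coner$ and $a$ only $\Calpr$, it must be carried out entirely in the weak formulation, the delicate point being to verify that $u_{0}^{1}\psi$ and $u_{0}^{j}\psi$ genuinely belong to $H_{0}^{1}(\Omega)$ so that they may legitimately be inserted into the variational identities satisfied by $u_{0}^{j}$ and $u_{0}^{1}$. Everything else reduces to the maximum principle, a citation of Proposition~\ref{prop:BMN}, and elementary linear algebra.
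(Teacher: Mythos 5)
Your proof is correct, but it follows a genuinely different, more direct route than the paper's. The paper argues by contradiction: it first applies Proposition \ref{prop:BMN} to the \emph{original} coefficient $a$, using the hypothesis $(\phi_{2},\phi_{3})\in\BMN$, to write $\nabla u_{0}^{1}=\lambda\nabla u_{0}^{2}+\mu\nabla u_{0}^{3}$ and factor the determinant as $(u_{0}^{1}-\lambda u_{0}^{2}-\mu u_{0}^{3})\det\begin{bmatrix}\nabla u_{0}^{2} & \nabla u_{0}^{3}\end{bmatrix}$; then, assuming a zero at some $x_{0}$, it introduces the quotients $\psi_{i}=u_{0}^{i}/u_{0}^{1}$, shows they solve the conjugated equation $-\div\bigl((u_{0}^{1})^{2}a\nabla\psi_{i}\bigr)=0$, and derives $\lambda_{0}\nabla\psi_{2}(x_{0})+\mu_{0}\nabla\psi_{3}(x_{0})=0$, contradicting Proposition \ref{prop:BMN} for that conjugated operator. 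You instead establish the pointwise identity $\det\begin{bmatrix}u_{0}^{1} & u_{0}^{2} & u_{0}^{3}\\ \nabla u_{0}^{1} & \nabla u_{0}^{2} & \nabla u_{0}^{3}\end{bmatrix}=(u_{0}^{1})^{3}\det\begin{bmatrix}\nabla v^{2} & \nabla v^{3}\end{bmatrix}$ by column operations (which checks out: after $C_{j}\mapsto C_{j}-v^{j}C_{1}$ the first row becomes $(u_{0}^{1},0,0)$ and the gradient entries become $u_{0}^{1}\nabla v^{j}$) and invoke Proposition \ref{prop:BMN} just once, for the conjugated operator. Both proofs rest on the same two ingredients --- the maximum principle for $u_{0}^{1}$ and the quotient/conjugation device, which the paper justifies by the same weak-formulation computation via Lemma \ref{lem:trace-uv}, so your care about testing with $u_{0}^{1}\psi,u_{0}^{j}\psi\in H_{0}^{1}(\Omega)$ is exactly what the paper needs too --- but your factorization removes the contradiction and, as you correctly observe, shows that the hypothesis $(\phi_{2},\phi_{3})\in\BMN$ is not actually needed for this lemma (it remains necessary in Theorem \ref{thm:complete_2d} to secure condition (CSM 2)). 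As a small bonus, your identity is quantitative: it bounds the $3\times3$ determinant from below by $\inf_{\Omega}|u_{0}^{1}|^{3}$ times $\inf_{\Omega}\det\begin{bmatrix}\nabla v^{2} & \nabla v^{3}\end{bmatrix}$, whereas the paper's argument only yields nonvanishing.
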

We are now in a position to state the main results of this subsection,
concerning the construction of complete sets of measurements in dimension
$d=2$.
\begin{thm}
\label{thm:complete_2d}Suppose $d=2$. Take $\Omega'\Subset\Omega$
and $\phi_{1},\phi_{2},\phi_{3}\in\gamma\left(\Conealpr\right)$ such
that $\phi_{1}$ has a fixed sign and $\left(\phi_{2},\phi_{3}\right),\left(\frac{\phi_{2}}{\phi_{1}},\frac{\phi_{3}}{\phi_{1}}\right)\in\BMN$.
We can choose a finite $K\subseteq\Kad\setminus\Sigma$ such that
\[
K\times\{\phi_{1},\phi_{2},\phi_{3}\}
\]
is a complete set of measurements in $\Omega'$.

If in addition $\Omega$ is a $\Cl^{2}$ domain and $a\in\Czeroone$
then we can choose a finite $K'\subseteq\Kad\setminus\Sigma$ such
that
\[
K'\times\{\phi_{1},\phi_{2},\phi_{3}\}
\]
is a complete set of measurements in $\Omega$.
\end{thm}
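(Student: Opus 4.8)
The plan is to follow the three-step scheme described right after Definition \ref{def:complete sets}, but to treat the three requirements \eqref{eq:complete_i)}, \eqref{eq:complete_ii)}, \eqref{eq:complete_iii)} \emph{simultaneously}, so that one and the same frequency $\bar k=\bar k(x)$ serves all of them. To this end I would introduce the bounded multilinear map $\zeta\colon\Cone^{6}\to\Cl(\overline{\Omega'};\C)$ defined by
\[
\zeta(v_{1},\dots,v_{6})=v_{1}\cdot\det\begin{bmatrix}\nabla v_{2} & \nabla v_{3}\end{bmatrix}\cdot\det\begin{bmatrix}v_{4} & v_{5} & v_{6}\\ \nabla v_{4} & \nabla v_{5} & \nabla v_{6}\end{bmatrix},
\]
and apply Lemma \ref{lem:preliminary_analyticity} with $b=6$ and the illumination list $(\phi_{1},\phi_{2},\phi_{3},\phi_{1},\phi_{2},\phi_{3})$. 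Multilinearity is clear (each slot enters linearly, either directly or as a column $\nabla v_i$ of a determinant), and $\|\zeta(v_{1},\dots,v_{6})\|_{\Cl(\overline{\Omega'};\C)}\le C\prod_{i}\|v_{i}\|_{\Cone}$, so $\zeta$ meets the hypotheses of Lemma \ref{lem:preliminary_analyticity}.

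Next I would verify assumption \eqref{eq:hp_lemma_sets} with $k^{x}=0$ for every $x$, namely that
\[
u_{0}^{1}\cdot\det\begin{bmatrix}\nabla u_{0}^{2} & \nabla u_{0}^{3}\end{bmatrix}\cdot\det\begin{bmatrix}u_{0}^{1} & u_{0}^{2} & u_{0}^{3}\\ \nabla u_{0}^{1} & \nabla u_{0}^{2} & \nabla u_{0}^{3}\end{bmatrix}\neq0\quad\text{on }\overline{\Omega'},
\]
where $u_{0}^{i}=u_{0}^{\phi_{i}}$. Since a product of scalars vanishes exactly when a factor does, it is enough to check the three factors one at a time. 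As $\phi_{1}$ has a fixed sign and $u_{0}^{\phi_{1}}$ solves $-\div(a\,\nabla u_{0}^{\phi_{1}})=0$ with datum $\phi_{1}$, the minimum/maximum principle gives (say for $\phi_{1}>0$) $u_{0}^{\phi_{1}}\ge\min_{\partial\Omega}\phi_{1}>0$ on all of $\overline{\Omega}$, and analogously for $\phi_{1}<0$. The second factor is nonzero on $\Omega$ by Proposition \ref{prop:BMN}, because $(\phi_{2},\phi_{3})\in\BMN$. The third factor is nonzero on $\Omega$ by Lemma \ref{lem:condition_iii}, because $\phi_{1}$ has a fixed sign and $(\phi_{2},\phi_{3}),(\phi_{2}/\phi_{1},\phi_{3}/\phi_{1})\in\BMN$. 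Since $\Omega'\Subset\Omega$ gives $\overline{\Omega'}\subseteq\Omega$, assumption \eqref{eq:hp_lemma_sets} holds on $\overline{\Omega'}$.

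Lemma \ref{lem:preliminary_analyticity} then yields a finite $K\subseteq\Kad\setminus\Sigma$ with $\sum_{k\in K}|\zeta(u_{k}^{1},u_{k}^{2},u_{k}^{3},u_{k}^{1},u_{k}^{2},u_{k}^{3})(x)|>0$ on $\overline{\Omega'}$. By continuity and compactness of $\overline{\Omega'}$ this sum is bounded below by some $c>0$, so for each $x$ at least one index $\bar k=\bar k(x)\in K$ contributes a term $\ge c/\#K$; that is, the product $|u_{\bar k}^{1}(x)|\,\bigl|\det[\nabla u_{\bar k}^{2}\ \nabla u_{\bar k}^{3}](x)\bigr|\,\bigl|\det[\dots](x)\bigr|\ge c/\#K$. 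Finally, since $K$ is finite and $k\mapsto u_{k}^{\phi_{i}}$ is continuous into $\Cone$ (Proposition \ref{pro:T_analytic}), $M:=\max_{i,\,k\in K}\|u_{k}^{\phi_{i}}\|_{\Cone}<\infty$; hence the three factors are bounded \emph{above} by $M$, $C M^{2}$, $C M^{3}$, and combining these upper bounds with the lower bound on their product produces positive constants $p,r,s$ (depending on $c$, $\#K$, $M$) for which \eqref{eq:complete_i)}--\eqref{eq:complete_iii)} hold at $\bar k(x)$. This gives the first assertion. For the second one repeats the argument with $\Omega'=\Omega$: the additional hypotheses ($\Omega$ of class $\Cl^{2}$, $a\in\Czeroone$) are precisely what makes the second factor (second part of Proposition \ref{prop:BMN}) and the third factor (second part of Lemma \ref{lem:condition_iii}) nonzero on $\overline{\Omega}$, while the maximum-principle bound on the first factor was already valid there.

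The genuine content sits entirely at $k=0$: the nonvanishing of the $3\times3$ determinant (Lemma \ref{lem:condition_iii}, proved independently) and Proposition \ref{prop:BMN} from \cite{bauman2001univalent}; granting these, the multi-frequency transfer is immediate from Lemma \ref{lem:preliminary_analyticity}. The two points that still need a little care are (i) bundling the three conditions into a single multilinear $\zeta$ so that \emph{one} frequency $\bar k(x)$ works — applying Lemma \ref{lem:preliminary_analyticity} separately to each condition would only produce three possibly incompatible finite sets — and (ii) passing from a lower bound on the \emph{product} of the three quantities to lower bounds on each of them, which is exactly where the uniform $\Cone$-bounds over the finite set $K$ are used.
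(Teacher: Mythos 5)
Your proposal is correct and follows essentially the same route as the paper: the same product map $\zeta$ with $b=6$ and the duplicated illumination list, the same $k=0$ verification via the Strong Maximum Principle, Proposition \ref{prop:BMN} and Lemma \ref{lem:condition_iii}, and the same compactness plus uniform $\Cone$-bound argument to split the lower bound on the product into the three constants $p,r,s$. The two points you flag as needing care are exactly the ones the paper handles (the second one implicitly), so there is nothing to add.
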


\begin{rem}
Note that the hypotheses on the boundary conditions  are satisfied
if $\phi_{1}=1$ and $\left(\phi_{2},\phi_{3}\right)\in\BMN$. Therefore,
if $\Omega$ is convex, by Remark \ref{rem:BMN} an easy choice for
the boundary conditions is $\phi_{1}=1$, $\phi_{2}=x_{1}$ and $\phi_{3}=x_{2}$.\end{rem}
\begin{proof}
By Lemma \ref{lem:condition_iii}, Proposition \ref{prop:BMN} and
Strong Maximum Principle we obtain that 
\[
u_{0}^{1},\det\begin{bmatrix}\nabla u_{0}^{2} & \nabla u_{0}^{3}\end{bmatrix},\det\begin{bmatrix}u_{0}^{1} & u_{0}^{2} & u_{0}^{3}\\
\nabla u_{0}^{1} & \nabla u_{0}^{2} & \nabla u_{0}^{3}
\end{bmatrix}\neq0\quad\text{in \ensuremath{\Omega},}
\]
respectively. Thus we can apply Lemma \ref{lem:preliminary_analyticity}
with $\Omega'\Subset\Omega$, $b=6$, $k^{x}=0$, $\phi_{4}=\phi_{1}$,
$\phi_{5}=\phi_{2}$, $\phi_{6}=\phi_{3}$ and $\zeta(u,v,w,t,z,y)=u\det\begin{bmatrix}\nabla v & \nabla w\end{bmatrix}\det\begin{bmatrix}t & z & y\\
\nabla t & \nabla z & \nabla y
\end{bmatrix}$ to obtain the existence of a finite set $K\subseteq\Kad\setminus\Sigma$
such that 
\[
p(x):=\sum_{k\in K}\Bigl|u_{k}^{1}\det\begin{bmatrix}\nabla u_{k}^{2} & \nabla u_{k}^{3}\end{bmatrix}\det\begin{bmatrix}u_{k}^{1} & u_{k}^{2} & u_{k}^{3}\\
\nabla u_{k}^{1} & \nabla u_{k}^{2} & \nabla u_{k}^{3}
\end{bmatrix}\Bigr|(x)>0,\qquad x\in\overline{\Omega'}.
\]
In the sequel, we shall denote several positive constants independent
of $x\in\overline{\Omega'}$ by $c$. Since $u_{k}^{\phi_{i}}\in\Coner$
(Proposition \ref{pro:holder}), $p(x)\ge c$ for every $x\in\overline{\Omega'}$.
As a result, for any $x\in\Omega'$ there exists $k\in K$ such that
\[
\Bigl|u_{k}^{1}\det\begin{bmatrix}\nabla u_{k}^{2} & \nabla u_{k}^{3}\end{bmatrix}\det\begin{bmatrix}u_{k}^{1} & u_{k}^{2} & u_{k}^{3}\\
\nabla u_{k}^{1} & \nabla u_{k}^{2} & \nabla u_{k}^{3}
\end{bmatrix}\Bigr|(x)\ge c.
\]
As a consequence we have
\[
\left|u_{k}^{1}(x)\right|\ge c,\quad\left|\det\begin{bmatrix}\nabla u_{k}^{2} & \nabla u_{k}^{3}\end{bmatrix}(x)\right|\ge c,\quad\Bigl|\det\begin{bmatrix}u_{k}^{1} & u_{k}^{2} & u_{k}^{3}\\
\nabla u_{k}^{1} & \nabla u_{k}^{2} & \nabla u_{k}^{3}
\end{bmatrix}(x)\Bigr|\ge c,
\]
whence the first part of the theorem follows.

If in addition we suppose that $\Omega$ is a $\Cl^{2}$ domain and
that $a\in\Czeroone$, we can use the second part of Proposition \ref{prop:BMN}
and the second part of Lemma \ref{lem:condition_iii} to infer that
\[
\det\begin{bmatrix}u_{0}^{1} & u_{0}^{2} & u_{0}^{3}\\
\nabla u_{0}^{1} & \nabla u_{0}^{2} & \nabla u_{0}^{3}
\end{bmatrix},\det\begin{bmatrix}\nabla u_{0}^{2} & \nabla u_{0}^{3}\end{bmatrix}\neq0\quad\text{in \ensuremath{\overline{\Omega}},}
\]
respectively. Therefore, arguing as before, we obtain a complete set
in $\Omega$.
\end{proof}
From the proof of Theorem \ref{thm:complete_2d} it is clear that
the assumptions $\phi_{1}\gtrless0$ and $(\phi_{2},\phi_{3})\in\BMN$
allow us to deduce (\ref{eq:complete_i)}) and (\ref{eq:complete_ii)})
of Definition \ref{def:complete sets}, respectively. One may wonder
if the above Theorem still holds without one of these two hypotheses.
The answer is no, as the following examples show. Therefore, if the
boundary conditions $\phi_{i}$ are not chosen properly, in general
one cannot expect to obtain (\ref{eq:complete_i)}) and (\ref{eq:complete_ii)})
by doing many measurements with different wavenumbers but with fixed
illuminations. First, we provide a counterexample for condition (\ref{eq:complete_i)}).
\begin{example}
\label{exa:condition_i}Suppose $d=2$, $\Omega=B(0,1)$ and $a=q=1$.
We choose the boundary condition $\phi_{1}(x_{1},x_{2})=x_{1}$, that
clearly does not have a constant sign. In this case the Helmholtz
equation (\ref{eq:hel_repeated}) can be written in polar coordinates
$(r,\theta)$ and reads
\[
u_{rr}+\frac{u_{r}}{r}+\frac{u_{\theta\theta}}{r^{2}}+ku=0.
\]
It is straightforward to see that
\[
u_{k}^{1}(r,\theta)=\frac{J_{1}(\sqrt{k}r)}{J_{1}(\sqrt{k})}\cos\theta
\]
satisfies the above equation, where $J_{1}$ is the Bessel function
of the first kind of order $1$ and $k>0$ is not an eigenvalue of
the problem. Thus, $\left\{ u_{k}^{1}\right\} $ represents a family
of solution to the Helmholtz equation with fixed boundary condition
and varying wavenumber. However, condition (\ref{eq:complete_i)})
cannot hold since $u_{k}^{1}(0,x_{2})=0$ for every $k$ and for every
$x_{2}\in\left(-1,1\right)$.
\end{example}
Next, we study condition (\ref{eq:complete_ii)}). Since it expresses
the linear independence of the gradient of the solutions inside the
domain, we shall see that it is not possible to require $\phi_{1},\phi_{2}$
to be just linearly independent.
\begin{example}
\label{exa:condition_ii}We consider the situation of Example \ref{exa:condition_i}.
Suppose $\Omega=B(0,1)$ and $a=q=1$. We choose the boundary conditions
$\phi_{2}(x_{1},x_{2})=x_{1}$ and $\phi_{3}=1$. Clearly, $(\phi_{2},\phi_{3})\notin\BMN$
since $\phi_{3}$ is a constant, but $\phi_{2}$, $\phi_{3}$ are
linearly independent. It is straightforward to see that the corresponding
solutions to (\ref{eq:hel_repeated}) are
\[
u_{k}^{2}(r,\theta)=\frac{J_{1}(\sqrt{k}r)}{J_{1}(\sqrt{k})}\cos\theta,\qquad u_{k}^{3}(r,\theta)=\frac{J_{0}(\sqrt{k}r)}{J_{0}(\sqrt{k})},
\]
where $J_{n}$ is the Bessel function of the first kind of order $n$
and $k>0$ is not an eigenvalue of the problem. 

Take a matrix-valued function $A\colon\Omega\to GL(2)$, where $GL(2)$
denotes the set of $2\times2$ invertible matrices. By viewing $A(x)$
as a change of coordinates in $T_{x}\Omega$, the tangent space in
$x$ to $\Omega$, we get 
\[
\det\begin{bmatrix}A\nabla u_{k}^{2}A^{-1} & A\nabla u_{k}^{3}A^{-1}\end{bmatrix}=\det\left(A\begin{bmatrix}\nabla u_{k}^{2} & \nabla u_{k}^{3}\end{bmatrix}A^{-1}\right)=\det\begin{bmatrix}\nabla u_{k}^{2} & \nabla u_{k}^{3}\end{bmatrix}.
\]
Therefore, as far as $\det\begin{bmatrix}\nabla u_{k}^{2} & \nabla u_{k}^{3}\end{bmatrix}$
is concerned, we can express the gradient in any system of coordinates.

In this case, writing $\nabla u_{k}^{i}$ with respect to $e_{\theta}$
and $e_{r}$ we have $\nabla u_{k}^{i}=\frac{1}{r}\frac{\partial u_{k}^{i}}{\partial\theta}e_{\theta}+\frac{\partial u_{k}^{i}}{\partial r}e_{r}$.
Hence
\[
\det\begin{bmatrix}\nabla u_{k}^{2} & \nabla u_{k}^{3}\end{bmatrix}=\det\begin{bmatrix}\frac{1}{r}\frac{\partial u_{k}^{2}}{\partial\theta} & \frac{1}{r}\frac{\partial u_{k}^{3}}{\partial\theta}\\
\cdot & \cdot
\end{bmatrix}=\det\begin{bmatrix}-\frac{1}{r}\frac{J_{1}(\sqrt{k}r)}{J_{1}(\sqrt{k})}\sin\theta & 0\\
\cdot & \cdot
\end{bmatrix}.
\]
Thus, we have $\det\begin{bmatrix}\nabla u_{k}^{2} & \nabla u_{k}^{3}\end{bmatrix}(x_{1},0)=0$
for every $k$ and for every $x_{1}\in\left(-1,1\right)$ and so (\ref{eq:complete_ii)})
cannot be satisfied by using many measurements with these fixed illuminations
and varying wavenumbers.
\end{example}
We end this subsection with the proof of Lemma \ref{lem:condition_iii}. 
\begin{proof}
[Proof. (Lemma \ref{lem:condition_iii})]We shall prove only the first
part of the lemma. The second part can be proved following the same
argument.

For simplicity of notation we write $u_{i}:=u_{0}^{i}$ and suppose
$\phi_{1}>0$. By Proposition \ref{prop:BMN} we have $\det\begin{bmatrix}\nabla u_{2} & \nabla u_{3}\end{bmatrix}>0$
in $\Omega$, and so $\left\{ \nabla u_{2},\nabla u_{3}\right\} $
form a basis of $\R^{2}$ in every point of $\Omega$. Therefore there
exist $\lambda,\mu\colon\Omega\to\R$ such that 
\begin{equation}
\nabla u_{1}=\lambda\nabla u_{2}+\mu\nabla u_{3}\quad\text{ in \ensuremath{\Omega},}\label{eq:cond_iii_1}
\end{equation}
whence
\[
\det\begin{bmatrix}u_{1} & u_{2} & u_{3}\\
\nabla u_{1} & \nabla u_{2} & \nabla u_{3}
\end{bmatrix}=\left(u_{1}-\lambda u_{2}-\mu u_{3}\right)\det\begin{bmatrix}\nabla u_{2} & \nabla u_{3}\end{bmatrix}\quad\text{in }\Omega.
\]

By contradiction, suppose that there exists $x_{0}\in\Omega$ such
that 
\begin{equation}
u_{1}(x_{0})=\lambda(x_{0})u_{2}(x_{0})+\mu(x_{0})u_{3}(x_{0}).\label{eq:cond_iii_1bis}
\end{equation}
Denote $\lambda_{0}=\lambda(x_{0})$, $\mu_{0}=\mu(x_{0})$ and $z=\lambda_{0}u_{2}+\mu_{0}u_{3}$.
Since $\phi_{1}>0$, by the Strong Maximum Principle we get $u_{1}>0$
in $\overline{\Omega}$. Let $h=z/u_{1}$ and $\psi_{i}=u_{i}/u_{1}$
for $i=2,3$. For every $v\in\mathcal{D}(\Omega)$ and $i=2,3$ a
straightforward calculation shows that (see Lemma \ref{lem:trace-uv})
\[
\int_{\Omega}u_{1}^{2}\, a\nabla\psi_{i}\cdot\nabla v\, dx=0.
\]
Therefore $\psi_{i}$ is the unique solution to the problem
\[
\left\{ \begin{array}{l}
-\div(u_{1}^{2}\, a\nabla\psi_{i})=0\qquad\text{in \ensuremath{\Omega},}\\
\psi_{i}=\phi_{i}/\phi_{1}\qquad\text{on \ensuremath{\partial\Omega},}
\end{array}\right.
\]
and by using Proposition \ref{prop:BMN} we infer that
\begin{equation}
\det\begin{bmatrix}\nabla\psi_{2} & \nabla\psi_{3}\end{bmatrix}>0\quad\text{in \ensuremath{\Omega}},\label{eq:cond_iii_2}
\end{equation}
since $\left(\frac{\phi_{2}}{\phi_{1}},\frac{\phi_{3}}{\phi_{1}}\right)\in\BMN$.
By (\ref{eq:cond_iii_1}) and (\ref{eq:cond_iii_1bis}) we have $\nabla u_{1}(x_{0})=\nabla z(x_{0})$
and $u_{1}(x_{0})=z(x_{0})>0$, whence $\nabla(\log u_{1})(x_{0})=\nabla(\log z)(x_{0})$
and so $\nabla(\log h)(x_{0})=0$. As a consequence, $0=\nabla h(x_{0})=\lambda_{0}\nabla\psi_{2}(x_{0})+\mu_{0}\nabla\psi_{3}(x_{0}),$
which contradicts (\ref{eq:cond_iii_2}) since $(\lambda_{0},\mu_{0})\neq(0,0)$.
\end{proof}

\subsection{Complete Sets: Three-Dimensional Case\label{sub:Complete-Sets:-3D}}

Throughout this subsection we assume $d=3$. First, we need to consider
(\ref{eq:complete_ii)}) and (\ref{eq:complete_iii)}) for $k=0$.
In contrast to the case $d=2$, we cannot use Proposition \ref{prop:BMN},
since the equivalent statement is known not to hold in three dimensions
\cite{MR2073507}.

Thus, we assume that the parameter $a$ is a small perturbation of
a constant, symmetric and positive definite matrix $a_{0}$, i.e.
\[
a=a_{0}+s,\qquad\left\Vert s\right\Vert _{\Calpr}\le\delta,
\]
with $\delta$ small enough. The study of this approximation is common
in the literature \cite{triki2010,bal2011quantitative,ammari2012acousto,ammari2012quantitative}.
Note that we do not make any assumptions on $q$.

In order to understand why this approximation is useful, let us consider
the constant case, i.e. $a=a_{0}$. Choose $\phi_{1}=1$, $\phi_{2}=x_{1}$,
$\phi_{3}=x_{2}$ and $\phi_{4}=x_{3}$. Hence $1$, $x_{1}$, $x_{2}$
and $x_{3}$ are the solutions to the problems
\[
\left\{ \begin{array}{l}
-\div(a_{0}\,\nabla u)=0\qquad\text{in \ensuremath{\Omega},}\\
u=\phi_{i}\qquad\text{on \ensuremath{\partial\Omega},}
\end{array}\right.
\]
for $i=1,\dots,4$, respectively. Therefore conditions (\ref{eq:complete_i)}),
(\ref{eq:complete_ii)}) and (\ref{eq:complete_iii)}) are trivially
satisfied in the case $k=0$. Thanks to the multi-frequency approach
we can extend this property to any range of frequencies, and a continuity
argument allows small variations around a constant value. These two
steps are carried out in the following theorem, which concerns the
construction of complete sets of measurements in dimension $d=3$.
\begin{thm}
\label{thm:complete_3D}Suppose $d=3$. Let $a_{0}$ be a constant,
symmetric and positive definite matrix. There exists $\delta>0$ such
that for any $s\in\Calpr$ with $\left\Vert s\right\Vert _{\Calpr}\le\delta$
we can choose a finite $K\subseteq\Kad\setminus\Sigma$ such that
\[
K\times\{1,x_{1},x_{2},x_{3}\}
\]
is a complete set of measurements in $\Omega$ for $a=a_{0}+s$.\end{thm}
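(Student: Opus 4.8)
\textbf{Proof proposal for Theorem~\ref{thm:complete_3D}.}
The plan is to run the three-step strategy outlined after Lemma~\ref{lem:preliminary_analyticity}, with the case $k=0$ replaced by the case $(k,a)=(0,a_0)$ and with a continuity argument in $a$ replacing the role played in two dimensions by Proposition~\ref{prop:BMN} and Lemma~\ref{lem:condition_iii}. First I would record the baseline: with $a=a_0$ and $k=0$, the solutions associated with the illuminations $\phi_1=1$, $\phi_2=x_1$, $\phi_3=x_2$, $\phi_4=x_3$ are exactly the affine functions $1,x_1,x_2,x_3$, so
\[
u_0^1\equiv 1,\qquad
\det\begin{bmatrix}\nabla u_0^2 & \nabla u_0^3 & \nabla u_0^4\end{bmatrix}\equiv 1,\qquad
\det\begin{bmatrix}u_0^1 & \cdots & u_0^4\\ \nabla u_0^1 & \cdots & \nabla u_0^4\end{bmatrix}\equiv 1,
\]
so that all three quantities \eqref{eq:complete_i)}--\eqref{eq:complete_iii)} are bounded below by $1$ in $\overline{\Omega}$ when $a=a_0$, $k=0$.

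The next step is to upgrade from $a=a_0$ to $a=a_0+s$ with $\|s\|_{\Calpr}\le\delta$, still at $k=0$. For this I would invoke the continuity of the solution map $a\mapsto u_0^{\phi_i}$ from a neighborhood of $a_0$ in $\Calpr$ into $\Cone$ (this is exactly the Calder\'on-type dependence on the coefficient recalled in the excerpt before Proposition~\ref{pro:T_analytic}; alternatively one reruns the Neumann-series argument of that proof perturbing $a$ instead of $k$). Since the three functionals above are continuous (polynomial) maps of the tuple $(u_0^{\phi_i},\nabla u_0^{\phi_i})$ into $\Coner$, and each is $\ge 1$ on the compact set $\overline{\Omega}$ when $s=0$, there is $\delta>0$ such that for $\|s\|_{\Calpr}\le\delta$ one still has
\[
u_0^1>0,\qquad
\det\begin{bmatrix}\nabla u_0^2 & \nabla u_0^3 & \nabla u_0^4\end{bmatrix}>0,\qquad
\det\begin{bmatrix}u_0^1 & \cdots & u_0^4\\ \nabla u_0^1 & \cdots & \nabla u_0^4\end{bmatrix}>0
\quad\text{in }\overline{\Omega}
\]
(say each $\ge 1/2$), with $\delta$ depending only on $\Omega$, $a_0$, $\lambda$, $\Lambda$, $\beta_1$, $\beta_2$ and $\alpha$, not on $q$.

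With the $k=0$, $a=a_0+s$ positivity in hand, the final step is identical to the two-dimensional proof: apply Lemma~\ref{lem:preliminary_analyticity} on $\Omega'=\Omega$ (legitimate since the nonvanishing holds on all of $\overline\Omega$) with $b=8$, $k^x=0$, the eight illuminations $\phi_1,\dots,\phi_4,\phi_1,\phi_2,\phi_3,\phi_4$, and the multilinear map
\[
\zeta(u,v,w,t,\tilde u,\tilde v,\tilde w,\tilde t)
=u\,\det\begin{bmatrix}\nabla v & \nabla w & \nabla t\end{bmatrix}\,
\det\begin{bmatrix}\tilde u & \tilde v & \tilde w & \tilde t\\ \nabla\tilde u & \nabla\tilde v & \nabla\tilde w & \nabla\tilde t\end{bmatrix},
\]
which is bounded multilinear $\Cone^{8}\to\Coner$. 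This yields a finite $K\subseteq\Kad\setminus\Sigma$ with $\sum_{k\in K}|\zeta(\dots)(x)|>0$ on $\overline{\Omega}$, hence (using the $\Coner$ bounds from Proposition~\ref{pro:holder} and compactness of $\overline\Omega$) bounded below by a constant $c>0$; then for each $x\in\Omega$ some $\bar k\in K$ makes the product $\ge c$, and since each factor is itself uniformly bounded above on $\overline\Omega$, each factor is individually $\ge$ a constant at $x$, giving \eqref{eq:complete_i)}--\eqref{eq:complete_iii)}. I expect the only real point requiring care to be the second step---making the continuity of $a\mapsto u_0^{\phi_i}$ in the $\Cone$ norm (uniformly in $q$) precise enough to extract a $q$-independent $\delta$; everything else is a direct transcription of the $d=2$ argument.
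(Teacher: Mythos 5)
Your proposal is correct and follows essentially the same route as the paper: baseline identity solutions at $(k,a)=(0,a_0)$, a perturbation argument in $a$ at $k=0$ (which the paper makes explicit by applying elliptic regularity estimates to $u_{0}^{i}(s)-u_{0}^{i}(0)$, viewed as a solution of $-\div(a_{0}\nabla(u_{0}^{i}(s)-u_{0}^{i}(0)))=\div(s\nabla u_{0}^{i}(s))$, to get $\left\Vert u_{0}^{i}(s)-u_{0}^{i}(0)\right\Vert _{\Coner}\le c\,\delta$), and then Lemma \ref{lem:preliminary_analyticity} exactly as in the two-dimensional case. The only cosmetic slip is that your $\zeta$ lands in $\Cl\left(\overline{\Omega};\C\right)$ rather than $\Coner$, which is all the lemma requires; also note that at $k=0$ the solutions do not involve $q$ at all, so the $q$-independence of $\delta$ you flag as delicate is automatic.
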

\begin{proof}
Fix $0<\epsilon<\mina/3$, where $\mina$ is the smallest eigenvalue
of $a_{0}$. In the following we will consider positive constants
$c$ depending on $a_{0}$, $\Omega$ and $\epsilon$. Take $0<\delta\le\epsilon$
and $s\in\Calpr$ with $\left\Vert s\right\Vert _{\Calpr}\le\delta$.
For $a_{s}=a_{0}+s$ consider $u_{0}^{i}(s)$ the solution to the
problem
\[
\left\{ \begin{array}{l}
-\div(a_{s}\,\nabla u_{0}^{i}(s))=0\qquad\text{in \ensuremath{\Omega},}\\
u_{0}^{i}(s)=\phi_{i}\qquad\text{on \ensuremath{\partial\Omega},}
\end{array}\right.
\]
where $\phi_{1}=1$ and $\phi_{i}=x_{i-1}$ for $i=2,3,4$. Thus,
$\left\Vert u_{0}^{i}(s)\right\Vert _{2}\le c$ for some $c>0$. As
a result, from usual elliptic regularity theory (e.g. (8.38) in \cite{gilbarg2001elliptic})
we obtain $\left\Vert u_{0}^{i}(s)\right\Vert _{\infty}\le c$. By
Theorem 8.33 in \cite{gilbarg2001elliptic} this implies that 
\begin{equation}
\left\Vert \nabla u_{0}^{i}(s)\right\Vert _{\Calpvect}\le c.\label{eq:a_const_1}
\end{equation}

The same argument applied to $u_{0}^{i}(s)-u_{0}^{i}(0)$ as a solution
to 
\[
-\div\left(a_{0}\nabla(u_{0}^{i}(s)-u_{0}^{i}(0))\right)=\div(s\nabla u_{0}^{i}(s)).
\]
gives $\left\Vert u_{0}^{i}(s)-u_{0}^{i}(0)\right\Vert _{\infty}\le c\,\delta$
for some $c>0$. Hence, by (\ref{eq:a_const_1}) and Theorem 8.33
in \cite{gilbarg2001elliptic} applied to $u_{0}^{i}(s)-u_{0}^{i}(0)$
there holds
\[
\left\Vert \nabla u_{0}^{i}(s)-\nabla u_{0}^{i}(0)\right\Vert _{\infty}\le c\left(\left\Vert u_{0}^{i}(s)-u_{0}^{i}(0)\right\Vert _{\infty}+\left\Vert s\nabla u_{0}^{i}(s)\right\Vert _{\Calpvect}\right)\le c\,\delta,
\]
whence
\[
\left\Vert u_{0}^{i}(s)-u_{0}^{i}(0)\right\Vert _{\Coner}\le c\,\delta.
\]
As a consequence, since for $s=0$ we have
\[
u_{0}^{1}(0)=\det\begin{bmatrix}\nabla u_{0}^{2}(0) & \cdots & \nabla u_{0}^{4}(0)\end{bmatrix}=\det\begin{bmatrix}u_{0}^{1}(0) & \cdots & u_{0}^{4}(0)\\
\nabla u_{0}^{1}(0) & \cdots & \nabla u_{0}^{4}(0)
\end{bmatrix}=1\quad\text{in }\overline{\Omega},
\]
there exists $\delta\le\epsilon$ such that
\[
u_{0}^{1}(s),\det\begin{bmatrix}\nabla u_{0}^{2}(s) & \cdots & \nabla u_{0}^{4}(s)\end{bmatrix},\det\begin{bmatrix}u_{0}^{1}(s) & \cdots & u_{0}^{4}(s)\\
\nabla u_{0}^{1}(s) & \cdots & \nabla u_{0}^{4}(s)
\end{bmatrix}\neq0\quad\text{in }\overline{\Omega},
\]
for all $s\in\Calpr$ with $\left\Vert s\right\Vert _{\Calpr}\le\delta$.
Finally, arguing as in the proof of Theorem \ref{thm:complete_2d},
the result follows by using Lemma \ref{lem:preliminary_analyticity}.
\end{proof}

\subsection{Proper Sets of Measurements}

This subsection is devoted to the study of proper sets of measurements,
which were introduced in Subsection \ref{sub:Applications-to-Microwave}.
Since the conditions characterizing proper sets are weaker than those
of complete sets, we are not going into the details: the reader is
referred to the previous parts of this section. We first recall the
definition of proper sets of measurements for $a\in\Calpr$ and $q\in\linfr$
satisfying (\ref{eq:assumption_a}) and (\ref{eq:assumption_q}).

\begin{defproper}
Let $p$ and $r$ be two positive constants. A set of measurements $K\times\left\{ \phi_{i}:i=1,2,3\right\} $ is \emph{proper} in $\Omega'$ if for every $x\in\Omega'$ there exists $\bar{k}=\bar{k}(x)\in K$ such that: \begin{equation} \left|u_{\bar{k}}^{1}(x)\right|\ge p,\tag{PSM 1} \end{equation} \begin{equation} \bigl|\nabla u_{\bar{k}}^{2}(x)\bigr|\bigl|\nabla u_{\bar{k}}^{3}(x)\bigr|\left|\sin\theta_{\nabla u_{\bar{k}}^{2},\nabla u_{\bar{k}}^{3}}(x)\right|\ge r.\tag{PSM 2} \end{equation} The collection of all proper sets of measurements in $\Omega'$ with constants $p$ and $r$ will be denoted by $\mathcal{P}(\Omega';p,r)$.
\end{defproper}The multi-frequency approach discussed in the last section is easily
applicable for the construction of proper sets of measurements. In
general, the choice for the illuminations $1,x_{1}$ and $x_{2}$
gives a proper set in any dimension (in dimension three, $a$ is required
to be close to a constant).

Note that, in general, condition (\ref{eq:proper_ii)}) is weaker
than (\ref{eq:complete_ii)}), and they are equivalent when $d=2$
since 
\begin{equation}
\begin{split}\left|\det\left(\begin{bmatrix}w & z\end{bmatrix}\right)\right| & =\left|w\right|\left|z\right|\left|\sin\theta_{w,z}\right|,\quad w,z\in\R^{2}.\end{split}
\label{eq:link_determ_sine-1}
\end{equation}
Thus, when $d=2$ one can directly  use an analogue version of Theorem
\ref{thm:complete_2d}, without the assumption $(\frac{\phi_{2}}{\phi_{1}},\frac{\phi_{3}}{\phi_{1}})\in\BMN$,
which was needed to satisfy (\ref{eq:complete_iii)}).
\begin{thm}
\label{thm:proper_2d}Suppose $d=2$. Take $\Omega'\Subset\Omega$
and $\phi_{1},\phi_{2},\phi_{3}\in\gamma\left(\Conealpr\right)$ such
that $\phi_{1}$ has a fixed sign and $\left(\phi_{2},\phi_{3}\right)\in\BMN$.
We can choose a finite $K\subseteq\Kad\setminus\Sigma$ such that
\[
K\times\{\phi_{1},\phi_{2},\phi_{3}\}
\]
is a proper set of measurements in $\Omega'$.

If in addition $\Omega$ is a $\Cl^{2}$ domain and $a\in\Czeroone$
then we can choose a finite $K'\subseteq\Kad\setminus\Sigma$ such
that
\[
K'\times\{\phi_{1},\phi_{2},\phi_{3}\}
\]
is a proper set of measurements in $\Omega$.
\end{thm}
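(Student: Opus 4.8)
The plan is to follow closely the proof of Theorem~\ref{thm:complete_2d}, simply discarding the ingredients that concern the $(d+1)\times(d+1)$ determinant in (\ref{eq:complete_iii)}). The first point is the elementary identity (\ref{eq:link_determ_sine-1}): in dimension $d=2$ it shows that (\ref{eq:proper_ii)}) is nothing but $\bigl|\det\begin{bmatrix}\nabla u_{\bar k}^{2} & \nabla u_{\bar k}^{3}\end{bmatrix}(x)\bigr|\ge r$. Hence it suffices to produce a finite $K\subseteq\Kad\setminus\Sigma$ such that the two scalar quantities $u_{\bar k}^{1}(x)$ and $\det\begin{bmatrix}\nabla u_{\bar k}^{2} & \nabla u_{\bar k}^{3}\end{bmatrix}(x)$ are both bounded away from $0$, uniformly in $x\in\Omega'$, for a suitable $\bar k=\bar k(x)\in K$. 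These are exactly conditions (\ref{eq:complete_i)}) and (\ref{eq:complete_ii)}) of the complete case, without (\ref{eq:complete_iii)}); this is precisely why here we only impose $(\phi_2,\phi_3)\in\BMN$ and may drop the hypothesis $\bigl(\tfrac{\phi_2}{\phi_1},\tfrac{\phi_3}{\phi_1}\bigr)\in\BMN$.

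First I would deal with the base frequency $k=0$. Since $\phi_1$ has a fixed sign, the Strong Maximum Principle gives $u_0^{1}\neq0$ in $\Omega$; since $(\phi_2,\phi_3)\in\BMN$, the first part of Proposition~\ref{prop:BMN} gives $\det\begin{bmatrix}\nabla u_0^{2} & \nabla u_0^{3}\end{bmatrix}>0$ in $\Omega$. Consequently the product $u_0^{1}\,\det\begin{bmatrix}\nabla u_0^{2} & \nabla u_0^{3}\end{bmatrix}$ does not vanish anywhere in $\Omega$.

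Next I would invoke Lemma~\ref{lem:preliminary_analyticity} with $\Omega'\Subset\Omega$, $b=3$, $k^{x}=0$, and the multilinear bounded map $\zeta(u,v,w)=u\,\det\begin{bmatrix}\nabla v & \nabla w\end{bmatrix}$. This produces a finite $K\subseteq\Kad\setminus\Sigma$ with
\[
p(x):=\sum_{k\in K}\Bigl|u_{k}^{1}\,\det\begin{bmatrix}\nabla u_{k}^{2} & \nabla u_{k}^{3}\end{bmatrix}\Bigr|(x)>0,\qquad x\in\overline{\Omega'}.
\]
By Proposition~\ref{pro:holder} each solution $u_k^{\phi_i}$ is $\Cl^{1}$ up to $\partial\Omega$, so $p$ is continuous on the compact set $\overline{\Omega'}$ and therefore bounded below there by some positive constant. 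Fixing $x\in\Omega'$, there is then $\bar k\in K$ with $\bigl|u_{\bar k}^{1}\,\det\begin{bmatrix}\nabla u_{\bar k}^{2} & \nabla u_{\bar k}^{3}\end{bmatrix}\bigr|(x)$ bounded below by a positive constant, and since each of the two factors is uniformly bounded \emph{above} on $\overline{\Omega'}$ (again by the $\Cl^{1}$ bound), each factor is bounded \emph{below} by a positive constant depending only on these data. This gives (\ref{eq:proper_i)}) and, through (\ref{eq:link_determ_sine-1}), (\ref{eq:proper_ii)}), hence the first assertion. For the second assertion, assuming $\Omega$ is $\Cl^{2}$ and $a\in\Czeroone$, the second part of Proposition~\ref{prop:BMN} together with the Maximum Principle give $u_0^{1}$ and $\det\begin{bmatrix}\nabla u_0^{2} & \nabla u_0^{3}\end{bmatrix}$ non-vanishing on all of $\overline{\Omega}$, and one repeats the argument with $\overline{\Omega}$ in place of $\overline{\Omega'}$.

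I do not expect a genuine obstacle: the statement is a strict weakening of Theorem~\ref{thm:complete_2d}, and every ingredient is already available---the Strong Maximum Principle, Proposition~\ref{prop:BMN}, and the analyticity transfer of Lemma~\ref{lem:preliminary_analyticity}. The only detail worth a line of care is that a \emph{single} frequency $\bar k(x)$ must serve both (\ref{eq:proper_i)}) and (\ref{eq:proper_ii)}) at once; this is why $\zeta$ is chosen to be the \emph{product} of the two relevant quantities, so that a lower bound on $\zeta$ automatically forces a lower bound on each factor, instead of applying Lemma~\ref{lem:preliminary_analyticity} to the two quantities separately.
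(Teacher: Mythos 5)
Your proposal is correct and follows exactly the route the paper intends: the paper derives Theorem \ref{thm:proper_2d} as the analogue of Theorem \ref{thm:complete_2d} with the third condition dropped, using the identity (\ref{eq:link_determ_sine-1}), the Strong Maximum Principle, Proposition \ref{prop:BMN}, and Lemma \ref{lem:preliminary_analyticity} applied to the product $\zeta(u,v,w)=u\,\det\begin{bmatrix}\nabla v & \nabla w\end{bmatrix}$ so that one frequency serves both conditions. No gaps.
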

In dimension three, it is straightforward to check that the analogue
version of Theorem \ref{thm:complete_3D} is applicable, without the
illumination $x_{3}$.
\begin{thm}
\label{thm:proper_3D}Suppose $d=3$. Let $a_{0}$ be a constant,
symmetric and positive definite matrix. There exists $\delta>0$ such
that for any $s\in\Calpr$ with $\left\Vert s\right\Vert _{\Calpr}\le\delta$
we can choose a finite $K\subseteq\Kad\setminus\Sigma$ such that
\[
K\times\{1,x_{1},x_{2}\}
\]
is a proper set of measurements in $\Omega$ for $a=a_{0}+s$.
\end{thm}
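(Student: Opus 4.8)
The plan is to repeat the argument used to prove Theorem~\ref{thm:complete_3D}, now retaining only the three illuminations $\phi_{1}=1$, $\phi_{2}=x_{1}$, $\phi_{3}=x_{2}$, and replacing the two determinantal conditions appearing there by a single bilinear quantity that controls (\ref{eq:proper_ii)}). Since $a_{0}$ is constant, for $a=a_{0}$ and $k=0$ the functions $1,x_{1},x_{2}$ solve $-\div(a_{0}\,\nabla u)=0$ with the respective boundary values, so the unique solutions are $u_{0}^{1}(0)\equiv1$, $u_{0}^{2}(0)\equiv x_{1}$ and $u_{0}^{3}(0)\equiv x_{2}$, whence $\nabla u_{0}^{2}(0)\equiv e_{1}$ and $\nabla u_{0}^{3}(0)\equiv e_{2}$. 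Introduce the bilinear map
\[
m(v,w)=\partial_{1}v\,\partial_{2}w-\partial_{2}v\,\partial_{1}w,
\]
namely the third component of $\nabla v\times\nabla w$, so that $\left|m(v,w)\right|\le\left|\nabla v\times\nabla w\right|=\left|\nabla v\right|\left|\nabla w\right|\left|\sin\theta_{\nabla v,\nabla w}\right|$, the last equality being the $\R^{3}$ analogue of (\ref{eq:link_determ_sine-1}). Then $u_{0}^{1}(0)\equiv1\neq0$ and $m\bigl(u_{0}^{2}(0),u_{0}^{3}(0)\bigr)\equiv1\neq0$ on $\overline{\Omega}$, so (\ref{eq:proper_i)}) and (\ref{eq:proper_ii)}) hold trivially for $k=0$, $s=0$.

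First I would carry out the perturbation step exactly as in the proof of Theorem~\ref{thm:complete_3D}. For $a_{s}:=a_{0}+s$ with $\left\Vert s\right\Vert_{\Calpr}\le\delta$ and $\delta$ small, the uniform ellipticity of $a_{s}$ together with the interior and boundary $\Cl^{1,\alpha}$ elliptic estimates gives a constant $c>0$, depending only on $a_{0}$ and $\Omega$, such that the solutions $u_{0}^{i}(s)$ of $-\div(a_{s}\,\nabla u_{0}^{i}(s))=0$ with $u_{0}^{i}(s)=\phi_{i}$ on $\partial\Omega$ satisfy $\bigl\Vert u_{0}^{i}(s)-u_{0}^{i}(0)\bigr\Vert_{\Coner}\le c\,\delta$. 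Hence there is $\delta>0$, fixed once and for all, such that for all such $s$ one has $u_{0}^{1}(s)\neq0$ and $m\bigl(u_{0}^{2}(s),u_{0}^{3}(s)\bigr)\neq0$ throughout $\overline{\Omega}$. Observe that $u_{0}^{i}(s)$ is precisely the Helmholtz solution $u_{k}^{\phi_{i}}$ at $k=0$ for the coefficients $a_{s}$ and $q$; the coefficient $q$ plays no role in this step.

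Then I would fix $s$ with $\left\Vert s\right\Vert_{\Calpr}\le\delta$ and apply Lemma~\ref{lem:preliminary_analyticity} with $\Omega'=\Omega$, $b=3$, $k^{x}=0$, the illuminations $\phi_{1},\phi_{2},\phi_{3}$ above, and the multilinear bounded map $\zeta\colon\Cone^{3}\to\Cl(\overline{\Omega};\C)$, $\zeta(f,g,h)=f\cdot m(g,h)$. Assumption (\ref{eq:hp_lemma_sets}) holds by the previous paragraph, so the lemma produces a finite $K\subseteq\Kad\setminus\Sigma$ with $\sum_{k\in K}\bigl|u_{k}^{1}\,m(u_{k}^{2},u_{k}^{3})\bigr|(x)>0$ on $\overline{\Omega}$. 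By compactness of $\overline{\Omega}$ and since $u_{k}^{i}\in\Cone$ for each of the finitely many $k\in K$, this sum is $\ge c_{0}$ for some $c_{0}>0$, while $\left\Vert u_{k}^{i}\right\Vert_{\Cone}\le M$ uniformly in $k\in K$. Thus, for every $x\in\Omega$ there is $\bar{k}=\bar{k}(x)\in K$ with $\bigl|u_{\bar{k}}^{1}(x)\bigr|\,\bigl|m(u_{\bar{k}}^{2},u_{\bar{k}}^{3})(x)\bigr|\ge c_{0}/\#K$; since both factors are also bounded above in terms of $M$, this splits into $\bigl|u_{\bar{k}}^{1}(x)\bigr|\ge p$ and $\bigl|m(u_{\bar{k}}^{2},u_{\bar{k}}^{3})(x)\bigr|\ge r$ for suitable $p,r>0$ independent of $x$, the latter giving (\ref{eq:proper_ii)}) through $\left|m(v,w)\right|\le\left|\nabla v\right|\left|\nabla w\right|\left|\sin\theta_{\nabla v,\nabla w}\right|$. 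Hence $K\times\{1,x_{1},x_{2}\}\in\mathcal{P}(\Omega;p,r)$.

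I do not expect a real obstacle: this is a simplification of Theorem~\ref{thm:complete_3D}. The two points that need a little care are (i) that the sine quantity in (\ref{eq:proper_ii)}) is not of the multilinear form required by Lemma~\ref{lem:preliminary_analyticity} with a single copy of each solution, which is why one works with the bilinear minor $m$ instead --- it equals $1$ at the base point, hence survives the perturbation, and it bounds the sine quantity from below --- and (ii) the passage from a lower bound on the product $\bigl|u_{\bar{k}}^{1}\bigr|\,\bigl|m(u_{\bar{k}}^{2},u_{\bar{k}}^{3})\bigr|$ to separate lower bounds on the two factors, which relies on the uniform $\Cone$ bounds over the finite set $K$, exactly as in the proof of Theorem~\ref{thm:complete_2d}.
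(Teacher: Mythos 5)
Your argument is correct and is essentially the proof the paper intends: the paper disposes of this theorem by remarking that the analogue of Theorem \ref{thm:complete_3D} applies without the illumination $x_{3}$, i.e. exactly your combination of the perturbation estimate $\bigl\Vert u_{0}^{i}(s)-u_{0}^{i}(0)\bigr\Vert _{\Coner}\le c\,\delta$ around the explicit solutions $1,x_{1},x_{2}$ at $k=0$ with Lemma \ref{lem:preliminary_analyticity}. Your choice of the scalar minor $m(v,w)=\partial_{1}v\,\partial_{2}w-\partial_{2}v\,\partial_{1}w$, which equals $1$ at the base point and is dominated by $\left|\nabla v\right|\left|\nabla w\right|\left|\sin\theta_{\nabla v,\nabla w}\right|$, is the right way to fit (\ref{eq:proper_ii)}) into the multilinear framework of that lemma, and the splitting of the product bound into separate bounds via the uniform $\Cone$ estimates matches the proof of Theorem \ref{thm:complete_2d}.
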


\subsection{Numerical Experiments on the Number of Needed Frequencies}

The theory developed so far gives conditions on the illuminations
to construct complete and proper sets of measurements, provided a
finite number of frequencies are chosen in a fixed range. However,
we do not provide an estimate on the number of the required frequencies,
i.e. the cardinality $\#K$ of $K$. In order to document this point
and see what we can expect, we have performed a numerical test in
$3^{8}=6561$ different cases, namely for different choices of the
parameter distributions in the material. For simplicity, we have decided
to consider the two-dimensional case and the concept of proper sets
of measurements.

Take $\Omega=B(0,1)$. We now describe the coefficients we have used.
Set $c=0.35$, $r=0.2$ and $P_{1}=(-c,-c)$, $P_{2}=(-c,c)$, $P_{3}=(c,-c)$
and $P_{4}=(c,c)$. Let $\chi_{i}$ be the characteristic function
of $B(P_{i},r)$ (or, more precisely, a smooth approximation of it).
Then we set
\[
a=1+\sum_{i=1}^{4}\alpha_{i}\chi_{i},\qquad q=1+\sum_{i=1}^{4}\beta_{i}\chi_{i},
\]
where $\alpha_{i},\beta_{i}\in\{0,1,2\}$ (see Figure \ref{fig:structure}).
This construction gives $3^{8}$ different combinations. 

\begin{figure}
\caption{A particular combination of the coefficients.\label{fig:structure}}
\subfloat{\includegraphics{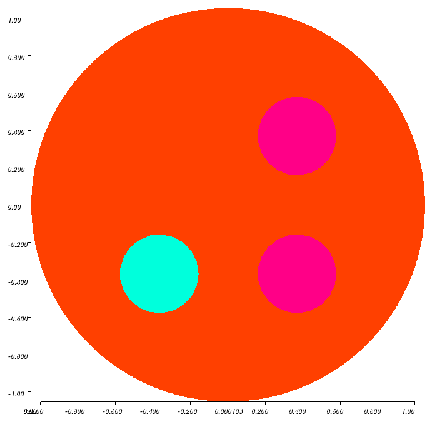}}\qquad{}\qquad{}\qquad{}\subfloat{\includegraphics{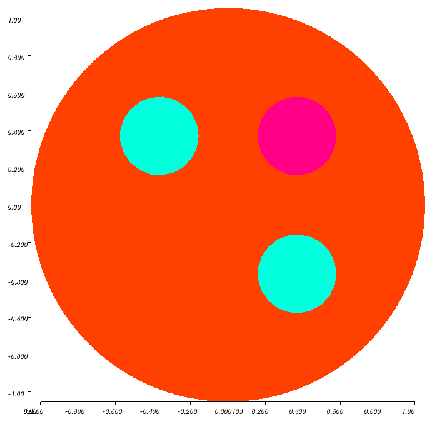}}
\end{figure}

In view of Theorem \ref{thm:proper_2d}, we choose the illuminations
$\phi_{1}=1$, $\phi_{2}=x_{1}$ and $\phi_{3}=x_{2}$. The sequence
$(k_{l})$ introduced in Lemma \ref{lem:preliminary_analyticity}
is given by $k_{l}=\lambda_{0}+a+\frac{b}{l+1}$, for some $a,b>0$
such that $\lambda_{0}+a+b<\lambda_{1}$. (As in Proposition \ref{pro:well-posed},
the sequence $(\lambda_{i})$ denotes the Dirichlet eigenvalues of
the problem.) These conditions ensure that the chosen frequencies
lie between the first and the second Dirichlet eigenvalue, which simplifies
the numerical computation. For any combination of the coefficients,
we compute the minimum number $\#K$  of needed frequencies such that
$\{k_{l}:l=0,\dots,\#K-1\}\times\{\phi_{i}\}$ is a proper set of
measurements in $\Omega$. The results are summarized in Table~\ref{tab:howmany}.
These figures suggest that in practical applications the number of
needed frequencies could be quite small. 
\begin{table}[h]
\caption{Number of combinations of coefficients per number of needed frequencies
to obtain a proper set of measurements in $\Omega$.\label{tab:howmany}}
\begin{tabular}{>{\centering}p{4.5cm}|>{\centering}p{1.4cm}>{\centering}p{1.4cm}>{\centering}p{1.4cm}}
Needed frequencies ($\#K$) & $2$ & $3$ & $\ge4$\tabularnewline
\hline 
Combinations of coefficients & 1609 & 4952 & 0\tabularnewline
\end{tabular}
\end{table}

\section{\label{sec:Microwave-Imaging-by}Applications to Microwave Imaging
by Ultrasound Deformation}

This section is devoted to the discussion of the hybrid problem introduced
in Subsection \ref{sub:Applications-to-Microwave} and to the proofs
of the results therein presented.

\subsection{\label{sub:Formulation-of-the}Formulation of the Problem}

The object under examination is a smooth bounded domain $\Omega\subseteq\R^{d}$,
for $d=2$ or $d=3$. In the microwave regime, the electric field
$u_{k}^{\phi}$ in $\Omega$ is assumed to satisfy the Helmholtz equation

\begin{equation}
\left\{ \begin{array}{l}
-\div(a\,\nabla u_{k}^{\phi})-k\, q\, u_{k}^{\phi}=0\qquad\text{in \ensuremath{\Omega},}\\
u_{k}^{\phi}=\phi\qquad\text{on \ensuremath{\partial\Omega},}
\end{array}\right.\label{eq:physical_modeling-2}
\end{equation}
where $a\in\Calprsca$ is the inverse of the magnetic permeability,
$q\in\linfr$ is the electric permittivity and $\sqrt{k}$ is the
wavenumber (with an abuse of language we shall sometimes refer to
$k$ as the frequency or the wavenumber). We assume (\ref{eq:assumption_a})
and (\ref{eq:assumption_q}). The use of the Helmholtz equation is
a very common scalar approximation of the Maxwell equations \cite{bulyshev2000three,vogelius2000asymptotic,cap2011,bal2012_review}.
As we have seen in Section \ref{sec:Helmholtz_Equation}, problem
(\ref{eq:physical_modeling-2}) is well-posed provided that $k$ is
not a resonant frequency. 

Practitioners can choose a frequency $k>0$ in a fixed range (not
a resonant frequency), a real illumination $\phi$ on the boundary
and measure the generated current on the boundary $a\frac{\partial u_{k}^{\phi}}{\partial\nu}$.
As described in \cite{cap2011}, these measurements are combined with
localized ultrasonic waves focusing in small regions $\omega$ inside
$\Omega$. We assume that the electromagnetic parameters are affected
linearly with respect to the amplitude of the ultrasonic perturbation,
that is supposed to be small. Moreover, this modification is localized
only in the region $\omega$. Under these assumptions, denoting the
modified coefficients by $\bar{a}$ and $\bar{q}$ we have

\[
\left\{ \begin{array}{c}
\bar{a}=a\,\left(1+c_{a}\alpha\chi_{\omega}\right)\text{,}\\
\bar{q}=q\,\left(1+c_{q}\alpha\chi_{\omega}\right)\text{,}
\end{array}\right.
\]
where $\alpha$ is the amplitude of the ultrasonic signal and $c_{a}$
and $c_{q}$ are known functions. The corresponding electric field
$\bar{u}_{k}^{\phi}$ is the solution to
\[
\left\{ \begin{array}{l}
-\div(\bar{a}\,\nabla\bar{u}_{k}^{\phi})-k\,\bar{q}\,\bar{u}_{k}^{\phi}=0\qquad\text{in \ensuremath{\Omega},}\\
\bar{u}_{k}^{\phi}=\phi\qquad\text{on \ensuremath{\partial\Omega}}.
\end{array}\right.
\]
The density current $a\frac{\partial\bar{u}_{k}^{\phi}}{\partial\nu}$
on the boundary of the domain is a measurable datum. 

We now see how the internal energies can be determined by studying
the change between $a\frac{\partial u_{k}^{\phi}}{\partial\nu}$ and
$a\frac{\partial\bar{u}_{k}^{\phi}}{\partial\nu}$. We consider a
wavenumber $k$ and two fixed illuminations $\phi$ and $\psi$. Suppose
the domain $\omega$ to be a small ball inside $\Omega$ of centre
$z$. By asymptotic analysis techniques \cite{ammari2007polarization,cap2011},
there holds
\begin{multline}
\int_{\partial\Omega}a\left(\frac{\partial\bar{u}_{k}^{\phi}}{\partial\nu}-\frac{\partial u_{k}^{\phi}}{\partial\nu}\right)\psi\, d\sigma\\
=\left|\omega\right|\frac{d\, c_{a}(z)\alpha}{c_{a}(z)\alpha+d}\, a(z)\nabla u_{k}^{\phi}(z)\cdot\nabla u_{k}^{\psi}(z)+\left|\omega\right|k\, c_{q}(z)\alpha\, q(z)u_{k}^{\phi}(z)u_{k}^{\psi}(z)+o(\left|\omega\right|),\label{eq:obtain_internal energiese}
\end{multline}
where $\left|\omega\right|$ denotes the Lebesgue measure of $\omega$
and $d\sigma$ the integration with respect to the surface area. Since
the left hand side is known, by choosing different values for $\alpha$
the internal power density data
\[
E_{k}^{\phi\psi}(z)=a(z)\nabla u_{k}^{\phi}(z)\cdot\nabla u_{k}^{\psi}(z),\qquad e_{k}^{\phi\psi}(z)=q(z)\, u_{k}^{\phi}(z)u_{k}^{\psi}(z)
\]
are recovered for every $z\in\Omega'$, where $\Omega'\Subset\Omega$
is the set of all possible centres where the ultrasonic beams are
focused. Note that the ``polarized'' data with $\phi\neq\psi$ is
available for a fixed $k$, but this argument does not allow the reconstruction
of the cross-frequency data
\[
E_{kl}^{\phi\psi}(z)=a(z)\nabla u_{k}^{\phi}(z)\cdot\nabla u_{l}^{\psi}(z),\qquad e_{kl}^{\phi\psi}(z)=q(z)\, u_{k}^{\phi}(z)u_{l}^{\psi}(z).
\]
We thus chose not to use cross-frequency data for the reconstruction,
in contrast to \cite{cap2011}.

Let us now represent the formulation of Problem \ref{prob:Choose-a-suitable}.
Given a set of measurement $K\times\{\phi_{i}\}$ we consider the
unique solution $u_{k}^{i}\in\Honer$ to the problem
\begin{equation}
\left\{ \begin{array}{l}
-\div(a\,\nabla u_{k}^{i})-k\, q\, u_{k}^{i}=0\qquad\text{in \ensuremath{\Omega},}\\
u_{k}^{i}=\phi_{i}\qquad\text{on \ensuremath{\partial\Omega}.}
\end{array}\right.\label{eq:hel}
\end{equation}

We define the internal data by 
\begin{equation}
e_{kl}^{ij}=q\, u_{k}^{i}\, u_{l}^{j},\qquad E_{kl}^{ij}=a\,\nabla u_{k}^{i}\cdot\nabla u_{l}^{j}.\label{eq:internal energies}
\end{equation}
For simplicity, we denote $e_{kl}=(e_{kl}^{ij})_{ij}$ and $e_{k}:=e_{kk}$
and similarly for $E$. The matrices $e_{k}$ and $E_{k}$ are to
be considered as known matrix-valued functions. We study the inverse
problem of determining the parameters $a$ and $q$ in $\Omega'$
from the knowledge in $\Omega'$ of $e_{k}$ and $E_{k}$ with a properly
chosen set of measurements. Note that this reconstruction problem
is slightly different to the one studied in \cite{cap2011}, where
the full matrices 
\[
e=\begin{bmatrix}e_{k_{1}k_{1}} & \cdots & e_{k_{1}k_{M}}\\
\vdots & \ddots & \vdots\\
e_{k_{M}k_{1}} & \cdots & e_{k_{M}k_{M}}
\end{bmatrix},\quad E=\begin{bmatrix}E_{k_{1}k_{1}} & \cdots & E_{k_{1}k_{M}}\\
\vdots & \ddots & \vdots\\
E_{k_{M}k_{1}} & \cdots & E_{k_{M}k_{M}}
\end{bmatrix}.
\]
are supposed to be known, with $K=\{k_{1},\dots,k_{M}\}$. In our
case, we suppose that only the diagonal blocks are measurable.

\subsection{\label{sub:Reconstruction-algorithm}Reconstruction Algorithm}

This subsection is devoted to the proofs of the exact formulae given
in Subsection \ref{sub:Applications-to-Microwave}.

\subsubsection{Reconstruction Formula for $a/q$}

We restate here Theorem \ref{thm:formulaaq}. The new contribution
of this work is the proof of the bounds (\ref{eq:positivity of traces}).

\begin{thmformulaaq}
\label{thm:formulaaq-1} Take $p,r>0$ and $K\times\left\{ \phi_{i}\right\} \in\mathcal{P}(\Omega';p,r)$. Suppose that 
\begin{equation} \left|E_{k}^{ii}(x)\right|,\left|e_{k}^{ii}(x)\right|\le b,\qquad x\in\Omega',\tag{\ref{eq:hp_with_M}}\label{eq:hp_with_M-1} 
\end{equation} 
for some $b>0$. Take $x\in\Omega'$ and $\bar{k}$ as in Definition \ref{def:proper sets}. Then there exists $C=C(\mina,\minq,b)>0$ such that  
\begin{equation} \frac{\tr(e_{\bar{k}})\,\tr(E_{\bar{k}})-\tr(e_{\bar{k}}E_{\bar{k}})}{\tr(e_{\bar{k}})^{2}}(x)\ge C\, p^{2}r^{4},\tag{\ref{eq:positivity of traces}} 
\end{equation} and $a/q$ is given in terms of the data by \begin{equation} \left|\nabla(e_{\bar{k}}/\tr(e_{\bar{k}}))\right|_{2}^{2}\,\frac{a}{q}=2\,\frac{\tr(e_{\bar{k}})\,\tr(E_{\bar{k}})-\tr(e_{\bar{k}}E_{\bar{k}})}{\tr(e_{\bar{k}})^{2}}\quad\text{in }x.\tag{\ref{eq:formula for a/q}}
\end{equation}
\end{thmformulaaq}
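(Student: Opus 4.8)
The plan is to work pointwise in a fixed $x\in\Omega'$, with $\bar k=\bar k(x)$ as in Definition \ref{def:proper sets}, and to reduce everything to two $3\times 3$ Gram-type matrices. Write $w^i=\nabla u_{\bar k}^i(x)\in\R^d$ and $v^i=u_{\bar k}^i(x)\in\R$ for $i=1,2,3$. Then $e_{\bar k}^{ij}(x)=q(x)\,v^i v^j$ and $E_{\bar k}^{ij}(x)=a(x)\,w^i\cdot w^j$, so $e_{\bar k}(x)=q(x)\,vv^{T}$ where $v=(v^1,v^2,v^3)^T$, and $E_{\bar k}(x)=a(x)\,G$ where $G=(w^i\cdot w^j)_{ij}$ is the Gram matrix of $w^1,w^2,w^3$. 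First I would compute the three scalar quantities in the numerator and denominator of \eqref{eq:positivity of traces}: $\tr(e_{\bar k})=q\,|v|^2$, $\tr(E_{\bar k})=a\,\tr(G)=a\sum_i|w^i|^2$, and $\tr(e_{\bar k}E_{\bar k})=aq\,\tr(vv^{T}G)=aq\,v^{T}Gv=aq\sum_{i,j}v^iv^j(w^i\cdot w^j)=aq\,\bigl|\sum_i v^i w^i\bigr|^2$. Hence
\[
\tr(e_{\bar k})\tr(E_{\bar k})-\tr(e_{\bar k}E_{\bar k})
= aq\Bigl(|v|^2\sum_i|w^i|^2-\Bigl|\sum_i v^iw^i\Bigr|^2\Bigr),
\]
and dividing by $\tr(e_{\bar k})^2=q^2|v|^4$ gives
\[
\frac{\tr(e_{\bar k})\tr(E_{\bar k})-\tr(e_{\bar k}E_{\bar k})}{\tr(e_{\bar k})^2}(x)
=\frac{a(x)}{q(x)}\cdot\frac{|v|^2\sum_i|w^i|^2-\bigl|\sum_i v^iw^i\bigr|^2}{|v|^4}.
\]
So I must bound below the quantity $N:=|v|^2\sum_i|w^i|^2-\bigl|\sum_i v^iw^i\bigr|^2$ and bound above $|v|^4$.

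The upper bound on $|v|^4$ is cheap: each $|v^i|=|u_{\bar k}^i(x)|\le b$ by a trivial consequence of \eqref{eq:hp_with_M} (with $i=j$, $e_{\bar k}^{ii}=q(v^i)^2$, and $q\ge\minq$, so $(v^i)^2\le b/\minq$), hence $|v|^2\le 3b/\minq$ and $|v|^4\le 9b^2/\minq^2$. Similarly $a|w^i|^2=E_{\bar k}^{ii}\le b$ and $a\ge\mina$ give $|w^i|^2\le b/\mina$. The real work is the lower bound on $N$. The Cauchy–Schwarz-type identity
\[
N=|v|^2\sum_i|w^i|^2-\Bigl|\sum_i v^iw^i\Bigr|^2
=\sum_{i<j}\bigl|v^iw^j-v^jw^i\bigr|^2
\ge \bigl|v^1w^2-v^2w^1\bigr|^2+\bigl|v^1w^3-v^3w^1\bigr|^2
\]
(the Lagrange identity for three vectors $v^iw^i$, valid since the cross terms telescope) reduces matters to showing that at least one of $|v^1w^2-v^2w^1|$, $|v^1w^3-v^3w^1|$ is bounded below. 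Here I use both (PSM 1) and (PSM 2): $|v^1|\ge p$, and $|w^2||w^3||\sin\theta_{w^2,w^3}|\ge r$. The geometric content of (PSM 2) is that $w^2,w^3$ span a $2$-plane on which the parallelogram they generate has area $\ge r$; in particular $|w^2|\ge r/|w^3|\ge r\sqrt{\mina/b}=:\rho>0$ and likewise $|w^3|\ge\rho$. Now consider the plane $P=\mathrm{span}(w^2,w^3)$ and let $\pi$ be orthogonal projection onto $P$. If $\pi(w^1)$ is "not too aligned" with $w^2$ then $v^1 w^2-v^2 w^1$ is large; if it is aligned with $w^2$ then it is far from $w^3$ and $v^1w^3-v^3w^1$ is large. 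Quantitatively: decompose $w^1=\alpha w^2+\beta w^3+w^\perp$ with $w^\perp\perp P$; then $v^1w^2-v^2w^1=(v^1-v^2\alpha)w^2-v^2\beta w^3-v^2 w^\perp$, and since $\{w^2,w^3\}$ are a basis of $P$ with Gram determinant $=|w^2|^2|w^3|^2\sin^2\theta\ge r^2$, the coefficient vector has squared norm controlled below by $r^2/(|w^2|^2|w^3|^2)$ times something; combining the two bracketed vectors and using $|v^1|\ge p$ while $|v^2|\le\sqrt{b/\minq}$ gives, after a short case analysis on whether $|v^2\alpha|\le|v^1|/2$ or not, a lower bound $N\ge c\, p^2 r^4/b^{?}$ for an explicit $c=c(\mina,\minq)$. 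I would organize this as: (i) if $|v^2|\,|\alpha|\ge p/2$ then $|w^1|$ is large, forcing (using area $\ge r$ again) $|v^1 w^3-v^3 w^1|$ large; (ii) otherwise $|v^1-v^2\alpha|\ge p/2$ and the $w^2$-component already gives $|v^1w^2-v^2w^1|^2\ge (p/2)^2\rho^2\cdot(\text{angle correction})$, and the angle correction is again $\gtrsim r^2/(|w^2|^2|w^3|^2)$.

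Once $N\ge C_1 p^2 r^4$ (with $C_1$ depending only on $\mina,\minq,b$) and $|v|^4\le C_2$ are in hand, \eqref{eq:positivity of traces} follows from $a/q\ge\mina/\maxq$ — wait, I should avoid introducing $\maxq$; instead I keep the factor $a/q$ symbolically, note $a/q>0$, and absorb it: since $a\ge\mina$ and (from $e_{\bar k}^{11}=q(v^1)^2\le b$ with $|v^1|\ge p$) we get $q\le b/p^2$, so $a/q\ge \mina p^2/b$ — this even improves the power, so I would simply state the bound with the constant $C=C(\mina,\minq,b)$ absorbing whatever comes out. Finally, \eqref{eq:formula for a/q} is the identity
\[
\bigl|\nabla\bigl(e_{\bar k}/\tr(e_{\bar k})\bigr)\bigr|_2^2\,\frac{a}{q}
=2\,\frac{\tr(e_{\bar k})\tr(E_{\bar k})-\tr(e_{\bar k}E_{\bar k})}{\tr(e_{\bar k})^2}
\]
proved in \cite{cap2011} for $d=2$; I would note that the computation is dimension-independent since it only uses $e_{\bar k}=q\,vv^T$, $\tr(e_{\bar k})=q|v|^2$, so $e_{\bar k}/\tr(e_{\bar k})=vv^T/|v|^2$ is the orthogonal projector onto $\R v$, whose entries depend on $u_{\bar k}^i$ only through ratios; differentiating and using the equations $-\div(a\nabla u_{\bar k}^i)=\bar k q u_{\bar k}^i$ to eliminate second derivatives reproduces the right-hand side with the factor $a/q$, exactly as in the two-dimensional proof. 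The nontriviality and the main obstacle is entirely in the lower bound for $N$, i.e. the case analysis turning the "area $\ge r$" hypothesis (PSM 2) together with (PSM 1) into a quantitative lower bound on a sum of two $2\times 2$ minors; the rest is bookkeeping with \eqref{eq:assumption_a}, \eqref{eq:assumption_q} and \eqref{eq:hp_with_M}.
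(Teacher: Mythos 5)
Your algebraic reduction is correct and genuinely different from the paper's argument: writing $e_{\bar k}=q\,vv^{T}$ and $E_{\bar k}=a\,G$ with $G$ the Gram matrix of the $w^{i}=\nabla u_{\bar k}^{i}(x)$, you get
\[
\frac{\tr(e_{\bar k})\tr(E_{\bar k})-\tr(e_{\bar k}E_{\bar k})}{\tr(e_{\bar k})^{2}}=\frac{a}{q}\cdot\frac{N}{|v|^{4}},\qquad N=|v|^{2}\sum_{i}|w^{i}|^{2}-\Bigl|\sum_{i}v^{i}w^{i}\Bigr|^{2}=\sum_{i<j}|v^{i}w^{j}-v^{j}w^{i}|^{2},
\]
and the bounds $|v|^{4}\le 9b^{2}/\minq^{2}$, $|w^{i}|^{2}\le b/\mina$ are fine; the treatment of \eqref{eq:formula for a/q} by reference to \cite{cap2011} matches what the paper itself does. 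The paper instead works with the Hilbert--Schmidt angle $\theta_{e_{\bar k},E_{\bar k}}$ between the two matrices: it bounds the minor $(E_{\bar k}^{2,3})^{2}-E_{\bar k}^{2,2}E_{\bar k}^{3,3}$ by $c\Vert E_{\bar k}\Vert(1+\tr(E_{\bar k})/\tr(e_{\bar k}))\sqrt{1-\cos\theta_{e_{\bar k},E_{\bar k}}}$ (using that the corresponding minor of the rank-one matrix $e_{\bar k}$ vanishes), feeds (\ref{eq:proper_ii)}) into that minor, and uses $\Vert e_{\bar k}\Vert=\tr(e_{\bar k})$, $\Vert E_{\bar k}\Vert\le\tr(E_{\bar k})$. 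Your route is more elementary and, carried out correctly, even gives the stronger bound $N\gtrsim p^{2}r^{2}$.

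However, the step that carries the entire content of \eqref{eq:positivity of traces} --- the lower bound $N\ge c\,p^{2}r^{4}$ --- is only sketched, and the case analysis as you organize it does not close. You discard the minor $|v^{2}w^{3}-v^{3}w^{2}|^{2}$ and split on $|v^{2}\alpha|$ versus $p/2$. Case (ii) works. But in case (i), ``$|w^{1}|$ large'' does not by itself make $|v^{1}w^{3}-v^{3}w^{1}|$ large: projecting onto $P$ gives $v^{1}w^{3}-v^{3}w^{1}=-v^{3}\alpha w^{2}+(v^{1}-v^{3}\beta)w^{3}-v^{3}w^{\perp}$, and in the sub-case $|v^{1}-v^{3}\beta|<p/2$ the only surviving coefficient is $|v^{3}\alpha|\ge\frac{p}{2|\beta|}\cdot\frac{p}{2|v^{2}|}$, which after inserting $|\beta|\lesssim b/(\mina r)$ and $|v^{2}|\le\sqrt{b/\minq}$ yields $N\gtrsim p^{4}r^{4}$; since $p$ may be arbitrarily small this does \emph{not} imply $N\ge Cp^{2}r^{4}$, so the stated estimate is not obtained. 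Two fixes: (a) reinstate the third minor, whose coefficient vector $(-v^{3},v^{2})$ in the basis $\{w^{2},w^{3}\}$ is bounded below exactly in that bad sub-case; or (b), cleaner, write $N=|v|^{2}\Vert(I-vv^{T}/|v|^{2})A\Vert_{2}^{2}$ with $A$ the $3\times d$ matrix of rows $w^{i}$, note that the operator norm of $(I-vv^{T}/|v|^{2})A$ is at least $\sigma_{2}(A)$ by Courant--Fischer (the projection annihilates only one direction of $\R^{3}$), and that $\sigma_{2}(A)\ge\sigma_{2}(\begin{bmatrix}w^{2}&w^{3}\end{bmatrix}^{T})\ge r\sqrt{\mina/(2b)}$ by interlacing together with (\ref{eq:proper_ii)}); this gives $N\ge p^{2}\mina r^{2}/(2b)\ge c\,p^{2}r^{4}$ since $r\le b/\mina$. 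Finally, your aside that $a/q\ge\mina p^{2}/b$ ``improves the power'' would double-count $p^{2}$; use $a/\tr(e_{\bar k})\ge\mina/(3b)$ instead.
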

\begin{rem}
It is worth noting that in presence of noise the dependence of $\bar{k}$
on $x$ does not constitute a source of instability during\foreignlanguage{english}{
the reconstruction performed in (\ref{eq:formula for a/q}). Take
$0<\epsilon<\min(p,r)$. Since }$K=\{k_{m}:m=1,\dots,M\}$\foreignlanguage{english}{
is finite, we define for each }$m\in\{1,\dots,M\}$\foreignlanguage{english}{
the set $G_{m}=\{x\in\Omega':\left|u_{k_{m}}^{1}(x)\right|>p-\epsilon\text{ and }\bigl|\nabla u_{k_{m}}^{2}(x)\bigr|\bigl|\nabla u_{k_{m}}^{3}(x)\bigr|\bigl|\sin\theta_{\nabla u_{k_{m}}^{2},\nabla u_{k_{m}}^{3}}(x)\bigr|>r-\epsilon\}$.
In view of} Proposition \ref{pro:holder}, $G_{m}$ is open. By Definition
\ref{def:proper sets}, $\Omega'$ can be written as $\Omega'=\cup_{m}G_{m}$,
and so we can consider a smooth partition of unity $\{\psi_{m}\}_{m}$
subject to the cover $\{G_{m}\}_{m}$. Denote by $r_{m}$ the value
for $a/q$ obtained via (\ref{eq:formula for a/q}) with $\bar{k}=k_{m}$.
In view of Theorem \ref{thm:formulaaq}, $r_{m}$ is a well-defined
function in $G_{m}$. We can now reconstruct $a/q$ in $\Omega'$
using
\[
\frac{a}{q}(x)=\sum_{m=1}^{M}r_{m}(x)\psi_{m}(x),\qquad x\in\Omega'.
\]
Thus, with this formulation we have removed the dependence of $\bar{k}$
on $x$ and so no additional instability occurs.\end{rem}
\begin{proof}
Condition (\ref{eq:proper_i)}) implies $\tr(e_{\bar{k}})>0$ in $\Omega'$,
and so we may divide by $\tr(e_{\bar{k}})$. Following the argument
given in the proof of Proposition 3.3 in \cite{cap2011} we obtain
\[
\left|\nabla(e_{\bar{k}}/\tr(e_{\bar{k}}))\right|_{2}^{2}\,\frac{a}{q}=2\,\frac{\tr(e_{\bar{k}})\,\tr(E_{\bar{k}})-\tr(e_{\bar{k}}E_{\bar{k}})}{\tr(e_{\bar{k}})^{2}}.
\]
We now prove (\ref{eq:positivity of traces}). For cleanliness of
notation we shall denote several positive constants depending on $\lambda$
and $b$ simply by $c=c(\lambda,b)>0$. Until the end of the proof,
all the functions will be considered evaluated in $x$. We equip the
space of real symmetric matrices with the Hilbert-Schmidt scalar product
defined by $\langle A,B\rangle=\tr(A\, B)$. We claim that
\begin{equation}
\left|\left(E_{\bar{k}}^{2,3}\right)^{2}-E_{\bar{k}}^{2,2}E_{\bar{k}}^{3,3}\right|\le c\left\Vert E_{\bar{k}}\right\Vert (1+\frac{\tr(E_{\bar{k}})}{\tr(e_{\bar{k}})})\sqrt{1-\cos\theta_{e_{\bar{k}},E_{\bar{k}}}}.\label{eq:1st_ineq}
\end{equation}
As a consequence of this inequality, which we shall prove later, we
get 
\begin{equation}
\sqrt{1-\cos\theta_{e_{\bar{k}},E_{\bar{k}}}}\left\Vert E_{\bar{k}}\right\Vert \ge c\,\frac{r^{2},}{1+\frac{\tr(E_{\bar{k}})}{\tr(e_{\bar{k}})}},\label{eq:3rd_ineq}
\end{equation}
since by (\ref{eq:proper_ii)}) there holds
\[
\left|\left(E_{\bar{k}}^{2,3}\right)^{2}\!-\! E_{\bar{k}}^{2,2}E_{\bar{k}}^{3,3}\right|=a^{2}\left|\left(\nabla u_{\bar{k}}^{2}\cdot\nabla u_{\bar{k}}^{3}\right)^{2}-\left|\nabla u_{\bar{k}}^{2}\right|^{2}\bigl|\nabla u_{\bar{k}}^{3}\bigr|^{2}\right|\ge cr^{2}.
\]
Note that, from the definition of $e_{k}$ and $E_{k}$, we have $\left(e_{\bar{k}}^{ij}\right)^{2}=e_{\bar{k}}^{ii}e_{\bar{k}}^{jj}$
and $\left(E_{\bar{k}}^{ij}\right)^{2}\le E_{\bar{k}}^{ii}E_{\bar{k}}^{jj}$.
In particular,
\begin{equation}
\left\Vert e_{\bar{k}}\right\Vert =\sqrt{\sum_{i,j}\left(e_{\bar{k}}^{ij}\right)^{2}}=\tr(e_{\bar{k}}),\quad\left\Vert E_{\bar{k}}\right\Vert =\sqrt{\sum_{i,j}\left(E_{\bar{k}}^{ij}\right)^{2}}\le\tr(E_{\bar{k}}).\label{eq:4th_ineq}
\end{equation}
Combining (\ref{eq:3rd_ineq}) and (\ref{eq:4th_ineq}) we obtain
\[
\begin{split}\frac{\tr(e_{\bar{k}})\,\tr(E_{\bar{k}})-\tr(e_{\bar{k}}E_{\bar{k}})}{\tr(e_{\bar{k}})^{2}} & \ge\frac{\tr(e_{\bar{k}})(1-\cos\theta_{e_{\bar{k}},E_{\bar{k}}})\left\Vert E_{\bar{k}}\right\Vert }{\tr(e_{\bar{k}})^{2}}\\
 & \ge\frac{c\,\mina^{4}r^{4}\tr(e_{\bar{k}})}{\tr(e_{\bar{k}})^{2}\tr(E_{\bar{k}})(1+\frac{\tr(E_{\bar{k}})}{\tr(e_{\bar{k}})})^{2}},
\end{split}
\]
which gives the desired result since the denominator is bounded by
a constant depending on $b$, and $\tr(e_{\bar{k}})\ge\minq p^{2}$.

Let us now turn to the proof of (\ref{eq:1st_ineq}). We use the notation
$g=E_{\bar{k}}/\left\Vert E_{\bar{k}}\right\Vert -e_{\bar{k}}/\left\Vert e_{\bar{k}}\right\Vert $.
By (\ref{eq:hp_with_M}) and (\ref{eq:4th_ineq}) we have
\[
\begin{split}\!\left|\!\left(E_{\bar{k}}^{2,3}\right)^{2}\!-E_{\bar{k}}^{2,2}E_{\bar{k}}^{3,3}\right| & \le\!\biggl|\left(E_{\bar{k}}^{2,3}\right)^{2}\!-\!\frac{\left\Vert E_{\bar{k}}\right\Vert ^{2}}{\left\Vert e_{\bar{k}}\right\Vert ^{2}}\left(e_{\bar{k}}^{2,3}\right)^{2}\biggr|+\biggl|\frac{\left\Vert E_{\bar{k}}\right\Vert ^{2}}{\left\Vert e_{\bar{k}}\right\Vert ^{2}}e_{\bar{k}}^{2,2}e_{\bar{k}}^{3,3}\!-\!\frac{\left\Vert E_{\bar{k}}\right\Vert }{\left\Vert e_{\bar{k}}\right\Vert }e_{\bar{k}}^{2,2}E_{\bar{k}}^{3,3}\biggr|\\
 & \quad+\left|\frac{\left\Vert E_{\bar{k}}\right\Vert }{\left\Vert e_{\bar{k}}\right\Vert }e_{\bar{k}}^{2,2}E_{\bar{k}}^{3,3}-E_{\bar{k}}^{2,2}E_{\bar{k}}^{3,3}\right|\\
 & =\left\Vert E_{\bar{k}}\right\Vert \biggl|E_{\bar{k}}^{2,3}+\frac{\left\Vert E_{\bar{k}}\right\Vert }{\left\Vert e_{\bar{k}}\right\Vert }e_{\bar{k}}^{2,3}\biggr|\left|g_{2,3}\right|+\frac{\left\Vert E_{\bar{k}}\right\Vert {}^{2}}{\left\Vert e_{\bar{k}}\right\Vert }e_{\bar{k}}^{2,2}\left|g_{3,3}\right|\\
 & \quad+\left\Vert E_{\bar{k}}\right\Vert E_{\bar{k}}^{3,3}\left|g_{2,2}\right|\\
 & \le c\left\Vert E_{\bar{k}}\right\Vert (1+\frac{\tr(E_{\bar{k}})}{\tr(e_{\bar{k}})})\sqrt{1-\cos\theta_{e_{\bar{k}},E_{\bar{k}}}},
\end{split}
\]
since $\left|g_{2,3}\right|+\left|g_{3,3}\right|+\left|g_{2,2}\right|\le c\left\Vert g\right\Vert \le c\sqrt{1-\cos\theta_{e_{\bar{k}},E_{\bar{k}}}}$.
\end{proof}

\subsubsection{\label{subsub:Reconstruction_q}Reconstruction Formula for $q$}

In this subsection we derive the reconstruction formula for $q$ given
in Theorem \ref{thm:formulaq}. It is based on the knowledge of the
ratio $G:=a/q$, that can be computed via the formula (\ref{eq:formula for a/q}).
Since this reconstruction involves the derivative of the data, we
need a stable way to obtain $q$ from $G$.

The following lemma reviews the derivatives and the trace of products
of functions in Sobolev Spaces.
\begin{lem}
\label{lem:trace-uv} Take $u,v\in\Hone\cap\linf$. Then $uv\in\Hone$,
$\nabla(uv)=u\,\nabla v+v\,\nabla u$ and $\gamma(uv)=\gamma(u)\gamma(v)$.
If in addition $v\ge c>0$ almost everywhere then $u/v\in\Hone$,
$\nabla(u/v)=\nabla u/v-u\,\nabla v/v^{2}$ and $\gamma(u/v)=\gamma(u)/\gamma(v)$.\end{lem}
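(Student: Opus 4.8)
The plan is to derive every assertion from the product rule, which I would prove by mollification while carefully tracking $L^{\infty}$ bounds so that products of the approximants pass to the limit.

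\textbf{The product.} Put $M=\max(\|u\|_{L^{\infty}},\|v\|_{L^{\infty}})$. Mollifying on compactly contained subdomains yields $u_{n},v_{n}\in C^{\infty}$ with $\|u_{n}\|_{L^{\infty}},\|v_{n}\|_{L^{\infty}}\le M$, converging to $u,v$ in $H^{1}_{\mathrm{loc}}(\Omega)$ and (along a subsequence) a.e. The classical product rule gives $\nabla(u_{n}v_{n})=u_{n}\nabla v_{n}+v_{n}\nabla u_{n}$. Splitting $u_{n}\nabla v_{n}-u\nabla v=u_{n}(\nabla v_{n}-\nabla v)+(u_{n}-u)\nabla v$, the first summand tends to $0$ in $L^{2}_{\mathrm{loc}}$ by the bound $\|u_{n}\|_{L^{\infty}}\le M$, and the second tends to $0$ in $L^{1}_{\mathrm{loc}}$ by dominated convergence with dominant $2M|\nabla v|$; symmetrically for $v_{n}\nabla u_{n}$. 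Hence $uv$ has weak gradient $u\nabla v+v\nabla u$, and since $\|uv\|_{L^{2}}\le M\|v\|_{L^{2}}$ and $\|u\nabla v+v\nabla u\|_{L^{2}}\le M(\|\nabla u\|_{L^{2}}+\|\nabla v\|_{L^{2}})$ are finite while weak differentiability is a local property, $uv\in H^{1}(\Omega;\C)$.

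\textbf{The trace of a product.} Now I would approximate globally: extend $u$ to $\tilde u\in H^{1}(\R^{d};\C)$, truncate it at level $M$ (this leaves $\tilde u$ unchanged on $\Omega$ because $|u|\le M$ a.e.\ there, and keeps it in $H^{1}\cap L^{\infty}$), and mollify, obtaining $u_{n}\in C^{\infty}(\overline{\Omega};\C)$ with $u_{n}\to u$ in $H^{1}(\Omega;\C)$ and $\|u_{n}\|_{L^{\infty}(\overline{\Omega})}\le M$; similarly $v_{n}$. By the estimates of the previous step, $u_{n}v_{n}\to uv$ in $H^{1}(\Omega;\C)$, hence $\gamma(u_{n}v_{n})\to\gamma(uv)$ in $L^{2}(\partial\Omega)$; and since $u_{n},v_{n}$ are smooth up to $\partial\Omega$, $\gamma(u_{n}v_{n})=\gamma(u_{n})\gamma(v_{n})$. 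Passing to a common subsequence along which all the boundary traces converge a.e.\ gives $\gamma(uv)=\gamma(u)\gamma(v)$.

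\textbf{The quotient.} Suppose $v\ge c>0$ a.e. First I would show $1/v\in H^{1}(\Omega;\C)\cap L^{\infty}$: by the product step $|v|^{2}=v\bar v\in H^{1}\cap L^{\infty}$ with $|v|^{2}\ge c^{2}$, so composing with $t\mapsto 1/t$ (Lipschitz on $[c^{2},\infty)$) gives $1/|v|^{2}\in H^{1}\cap L^{\infty}$ with $\nabla(1/|v|^{2})=-\nabla(|v|^{2})/|v|^{4}$, and then $1/v=\bar v/|v|^{2}$ is once more a product (if $v$ is real one composes directly with $t\mapsto 1/t$). Applying the product step to $u/v=u\cdot(1/v)$ gives $u/v\in H^{1}(\Omega;\C)$ and, after simplification, $\nabla(u/v)=\nabla u/v-u\,\nabla v/v^{2}$. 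For the trace, the previous step yields $\gamma(u/v)=\gamma(u)\gamma(1/v)$; applied to $v\cdot(1/v)=1$ it gives $\gamma(v)\gamma(1/v)=1$, and since $\gamma(v)\ge c$ a.e.\ on $\partial\Omega$ (being the $L^{2}(\partial\Omega)$-limit of traces of smooth approximants that remain $\ge c$ if one truncates the extension from below before mollifying), we conclude $\gamma(1/v)=1/\gamma(v)$, hence $\gamma(u/v)=\gamma(u)/\gamma(v)$. The only genuinely delicate point throughout is the uniform control of the approximants in $L^{\infty}$ — and, for the quotient, from below — which is arranged by truncating the extension before mollifying; the remaining manipulations of convergent sequences are routine.
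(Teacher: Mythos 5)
Your proof is correct. The paper's own ``proof'' is a one-line citation to Evans (Section 5.5, Theorem 1) and Gilbarg--Trudinger (Lemma 7.5), and your mollification argument with $L^{\infty}$ truncation of the extension is precisely the standard proof of those cited facts, so the approach is essentially the same, only written out in full.
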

\begin{proof}
It easily follows from \cite[Section 5.5, Theorem 1]{evans} and  \cite[Lemma 7.5]{gilbarg2001elliptic}.
\end{proof}
We need the following preliminary result, which follows from a straightforward
calculation by Lemma \ref{lem:trace-uv} and Proposition \ref{pro:holder}.
The proof is left to the reader.
\begin{lem}
\label{lem:u_i-u_j}Let $K\times\{\phi_{i}\}$ be a set of measurements.
Then $u_{k}^{i}u_{k}^{j}\in\Honer$ is the unique solution to the
problem
\begin{equation}
\left\{ \begin{array}{l}
-\div\left(a\,\nabla(u)\right)=2ke_{k}^{ij}-2E_{k}^{ij}\qquad\text{in \ensuremath{\Omega},}\\
u=\phi_{i}\phi_{j}\qquad\text{on \ensuremath{\partial\Omega},}
\end{array}\right.\label{eq:u_i-u_j}
\end{equation}
for every $k,i,j$.
\end{lem}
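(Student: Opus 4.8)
The plan is to check directly that $w := u_k^i u_k^j$ solves the boundary value problem (\ref{eq:u_i-u_j}) in the weak sense, and then to conclude by a uniqueness argument.

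First, by Proposition \ref{pro:holder} the solutions $u_k^i$ and $u_k^j$ of (\ref{eq:hel}) belong to $\Coner$, hence to $\Honer\cap\linfr$. Lemma \ref{lem:trace-uv} then gives $w\in\Honer$ with $\nabla w=u_k^i\,\nabla u_k^j+u_k^j\,\nabla u_k^i$ and $\gamma(w)=\gamma(u_k^i)\,\gamma(u_k^j)=\phi_i\phi_j$, so the Dirichlet condition in (\ref{eq:u_i-u_j}) is satisfied.

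Next I would verify the equation. Fix $v\in\mathcal{D}(\Omega)$. Since $u_k^i,u_k^j\in\Coner$ and $v$ has compact support, both $u_k^i v$ and $u_k^j v$ lie in $\Vr$ and are therefore admissible test functions in the weak formulations of (\ref{eq:hel}) for $u_k^j$ and $u_k^i$ respectively:
\[
\int_\Omega a\,\nabla u_k^j\cdot\nabla(u_k^i v)\,dx=k\int_\Omega q\,u_k^i u_k^j\,v\,dx,\qquad \int_\Omega a\,\nabla u_k^i\cdot\nabla(u_k^j v)\,dx=k\int_\Omega q\,u_k^i u_k^j\,v\,dx.
\]
Expanding $\nabla(u_k^i v)=v\,\nabla u_k^i+u_k^i\,\nabla v$ and its counterpart, using the symmetry of $a$ to identify $a\,\nabla u_k^i\cdot\nabla u_k^j=a\,\nabla u_k^j\cdot\nabla u_k^i$, and adding the two identities, the terms containing $\nabla v$ combine into $\int_\Omega a\,(u_k^i\,\nabla u_k^j+u_k^j\,\nabla u_k^i)\cdot\nabla v\,dx=\int_\Omega a\,\nabla w\cdot\nabla v\,dx$, and I obtain
\[
\int_\Omega a\,\nabla w\cdot\nabla v\,dx=\int_\Omega\bigl(2k\,q\,u_k^i u_k^j-2\,a\,\nabla u_k^i\cdot\nabla u_k^j\bigr)v\,dx=\int_\Omega\bigl(2k\,e_k^{ij}-2E_k^{ij}\bigr)v\,dx,
\]
which is precisely the weak form of $-\div(a\,\nabla w)=2k\,e_k^{ij}-2E_k^{ij}$.

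Finally, for uniqueness I note that $e_k^{ij}=q\,u_k^i u_k^j$ and $E_k^{ij}=a\,\nabla u_k^i\cdot\nabla u_k^j$ are bounded functions, hence in $\linfr\hookrightarrow H^{-1}(\Omega)$, while $\phi_i\phi_j\in\Hhalfr$; since $0\notin\Sigma$ (as $\Sigma\subseteq\R_+$), Corollary \ref{cor:bvp} applied with $k=0$ yields a unique $H^1$ solution to (\ref{eq:u_i-u_j}), which must therefore coincide with $w$. The only point requiring a little care is the bookkeeping in the second step: that the $\Coner$ regularity genuinely makes $u_k^\ell v$ an admissible test function (so that no boundary terms survive) and that the product rule of Lemma \ref{lem:trace-uv} applies; once this is granted the computation is, as the statement says, completely routine.
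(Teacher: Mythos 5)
Your argument is correct and is precisely the ``straightforward calculation by Lemma \ref{lem:trace-uv} and Proposition \ref{pro:holder}'' that the paper invokes and leaves to the reader: product rule and trace identity for $u_k^iu_k^j$, testing the two weak formulations against $u_k^jv$ and $u_k^iv$ and adding, then uniqueness from Corollary \ref{cor:bvp} with $k=0\notin\Sigma$. Nothing to add.
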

We are now in a position to prove the reconstruction formula for $q$
given in Theorem \ref{thm:formulaq}. Recall that if we consider a
proper set of measurements $K\times\{\phi_{i}\}$ we have that
\begin{equation}
\tr(e)=\sum_{k,i}e_{k}^{ii}\ge c>0\quad\text{in \ensuremath{\Omega}}',\label{eq:tr>0}
\end{equation}
for some $c>0$. 

\begin{thmformulaq}
Let \foreignlanguage{english}{\textup{$K\times\left\{ \phi_{i}\right\} $}} be a proper set of measurements. Suppose $q\in\Honer$. Then $\log q$ is the unique solution to the problem \[ \left\{ \begin{array}{l} -\div\left(G\,\tr(e)\,\nabla u\right)=-\div\left(G\,\nabla\left(\tr(e)\right)\right)+2\sum_{k,i}\left(E_{k}^{ii}-k e_{k}^{ii}\right)\quad\text{in }\Omega',\\ u=\log q_{|\partial\Omega'}\qquad\text{on \ensuremath{\partial\Omega}}'. \end{array}\right. \]
\end{thmformulaq}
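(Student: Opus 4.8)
The plan is to exhibit $\log q$ as the weak solution of the stated Dirichlet problem, combining Lemma~\ref{lem:u_i-u_j} summed over all indices with the Sobolev product and quotient rules of Lemma~\ref{lem:trace-uv}, and then to conclude uniqueness via Lax--Milgram. Throughout, the strict positivity $q\ge\minq>0$ from (\ref{eq:assumption_q}) is what makes every manipulation legitimate.

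First I would set $w:=\sum_{k\in K}\sum_i (u_k^i)^2$. Since $u_k^i\in\Coner$ (Proposition~\ref{pro:holder}) and $q\in\Honer\cap\linfr$, Lemma~\ref{lem:trace-uv} gives $e_k^{ii}=q\,(u_k^i)^2\in H^1(\Omega')$, hence $\tr(e)=\sum_{k,i}e_k^{ii}\in H^1(\Omega')$; moreover $w=\tr(e)/q\in H^1(\Omega')$ with $\nabla w=\nabla(\tr(e))/q-\tr(e)\,\nabla q/q^{2}$. Likewise $\log q\in H^1(\Omega')$ with $\nabla\log q=\nabla q/q$, since $\minq\le q\le\maxq$ and $\log$ is Lipschitz on $[\minq,\maxq]$. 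Summing the identity of Lemma~\ref{lem:u_i-u_j} with $j=i$ over $k\in K$ and $i$ yields, weakly in $\Omega'$,
\[
-\div(a\,\nabla w)=2\sum_{k,i}\bigl(k\,e_k^{ii}-E_k^{ii}\bigr).
\]

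Next I would substitute $a=Gq$, with $G=a/q$ recovered through Theorem~\ref{thm:formulaaq} (formula (\ref{eq:formula for a/q})), and use the expression for $\nabla w$ above to write $a\,\nabla w=G\,\nabla(\tr(e))-G\,\tr(e)\,\nabla\log q$. Plugging this into the last displayed identity and rearranging the divergence terms gives, for every $v\in\mathcal{D}(\Omega')$ and hence, by density, for every $v\in H_0^1(\Omega')$,
\[
\int_{\Omega'}G\,\tr(e)\,\nabla\log q\cdot\nabla v\,dx=\int_{\Omega'}G\,\nabla(\tr(e))\cdot\nabla v\,dx+\int_{\Omega'}2\sum_{k,i}\bigl(E_k^{ii}-k\,e_k^{ii}\bigr)v\,dx,
\]
which is precisely the weak formulation of the equation in the statement; the boundary condition holds because $u=\log q$ has trace $\log q_{|\partial\Omega'}$ on $\partial\Omega'$.

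It then remains to check well-posedness of the Dirichlet problem, so that this solution is the unique one. The coefficient $G\,\tr(e)$ is bounded and bounded away from zero on $\Omega'$: by (\ref{eq:assumption_a})--(\ref{eq:assumption_q}) one has $\mina/\maxq\le G\le\maxa/\minq$; by (\ref{eq:tr>0}), which holds because the set of measurements is proper, $\tr(e)\ge c>0$; and $\tr(e)$ is bounded above since each $u_k^i\in\Coner$ and $q\in\linfr$. Moreover the right-hand side lies in $H^{-1}(\Omega')$, because $G\,\nabla(\tr(e))\in L^2(\Omega';\R^d)$ and $E_k^{ii},e_k^{ii}\in\linfr$. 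Hence Lax--Milgram provides a unique $u\in H^1(\Omega')$ with $u-\log q_{|\partial\Omega'}\in H_0^1(\Omega')$, and by the computation above $u=\log q$. The only delicate point is the Sobolev bookkeeping of the second paragraph --- establishing $\tr(e)\in H^1(\Omega')$, the identity $w=\tr(e)/q$, and the product/quotient rules --- which is exactly where Lemma~\ref{lem:trace-uv} and $q\ge\minq>0$ are needed.
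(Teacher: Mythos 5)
Your proposal is correct and follows essentially the same route as the paper: both rest on Lemma \ref{lem:u_i-u_j} together with the quotient rule of Lemma \ref{lem:trace-uv} to rewrite $a\,\nabla(u_k^i u_k^i)$ as $G\,\nabla e_k^{ii}-G\,e_k^{ii}\,\nabla(\log q)$ and then sum over $k$ and $i$ (you merely sum first and manipulate afterwards). The only addition is your explicit Lax--Milgram verification of uniqueness using $\tr(e)\ge c>0$ from (\ref{eq:tr>0}), which the paper leaves implicit; this is a welcome but not essentially different step.
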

\begin{proof}
By Proposition \ref{pro:holder} we infer that $u_{k}^{i},u_{k}^{j}\in\Honer\cap\linfr$.
Therefore, by Lemma \ref{lem:trace-uv} we get that $u_{k}^{i}u_{k}^{j}\in\Honer\cap\linfr$.
Since $q\in\Honer\cap\linfr$ we obtain that $e_{k}^{ij}\in\Honer$.
Hence by Lemma 7.5 in \cite{gilbarg2001elliptic} we have
\[
\nabla(u_{k}^{i}u_{k}^{j})=\nabla\left(e_{k}^{ij}/q\right)=(\nabla e_{k}^{ij}-e_{k}^{ij}\nabla(\log q))/q,
\]
with $\log q\in\Honer$. As a result, in view of Lemma \ref{lem:u_i-u_j}
it is immediate to show that for every $v\in\mathcal{D}(\Omega)$
\[
\left(2ke_{k}^{ij}-2E_{k}^{ij}\right)(v)=-\div(G\,\nabla e_{k}^{ij})(v)+\div\left(G\, e_{k}^{ij}\,\nabla(\log q)\right)(v),
\]
whence the result follows by summing all these equations with $i=j$.
\end{proof}

\subsection{\label{sub:Numerical-experiment}Numerical Experiments}

In this section we shall do some numerical simulations of the reconstruction
algorithm discussed in the last section. FreeFem++ has been used.
The exact formulae given in Theorem \ref{thm:formulaaq} and Theorem
\ref{thm:formulaq} will be used to image the electromagnetic parameters
$a$ and $q$.

In both cases, the construction of proper sets of measurements by
means of the multi-frequency approach turns out to be effective. Further,
in two dimensions, the reconstruction procedure gives better results
than the one described in \cite{cap2011}: with about one seventh
of the available data, the reconstruction errors are about half of
the previous ones.

\subsubsection{\label{subsub:Two-Dimensional-Example}Two-Dimensional Example}

Since the two-dimensional case has been tested thoroughly in \cite{cap2011},
we have decided to study the same example in order to be able to make
a comparison. There are two main differences. First, the formula for
the reconstruction of $q$ is not the same. Second, in our case the
available data is smaller since we do not use the cross-frequency
data.

Let $\Omega=B(0,1)$ be the unit disk. We use a uniform mesh of the
disk with about 3000 triangles and 1600 vertices. The coefficients
are given by 
\[
a=\begin{cases}
2 & \text{in }B,\\
1.2 & \text{in }C,\\
2.5 & \text{in }E,\\
1 & \text{otherwise,}
\end{cases}\quad q=\begin{cases}
2 & \text{in }B,\\
1.8 & \text{in }C,\\
1.2 & \text{in }E,\\
1 & \text{otherwise.}
\end{cases}
\]
The set $B$ is the rectangle with diagonal $(0,0.4)-(0.3,0.5)$.
The set $C$ is the area delimited by the curve $t\mapsto(0.3+\rho(t)\cos(t),-0.2+\rho(t)\sin(t))$,
where $\rho(t)=(20+3\sin(5t)-2\sin(15t)+\sin(25t))/100$. The set
$E$ is the ellipse of centre $(-0.3,0.1)$, with vertical axis of
length $0.3$ and horizontal axis of length $0.2$. These parameters
represent three inclusions of different contrast in a homogeneous
background medium. This is the typical practical situation, since
cancerous tissues have typically higher values in the parameters \cite{widlak2012hybrid}.
The coefficients $a$ and $q$ are shown in Figure \ref{fig:reference_2d}.

\begin{figure}
\caption{The reference parameters.}
\subfloat[The coefficient $a$.]{\includegraphics{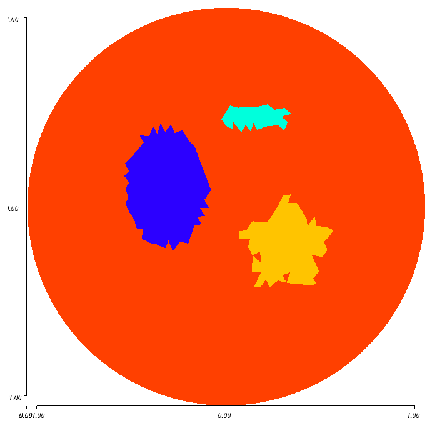}}\qquad{}\qquad{}\qquad{}\label{fig:reference_2d}\subfloat[The coefficient $q$.]{\includegraphics{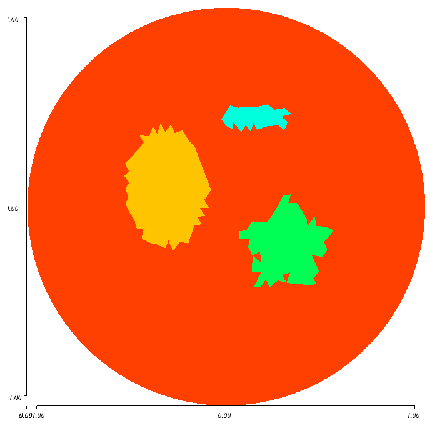}}
\end{figure}

\begin{figure}
\caption{The reconstructed parameters.}
\subfloat[The coefficient $a^{*}$.]{\includegraphics{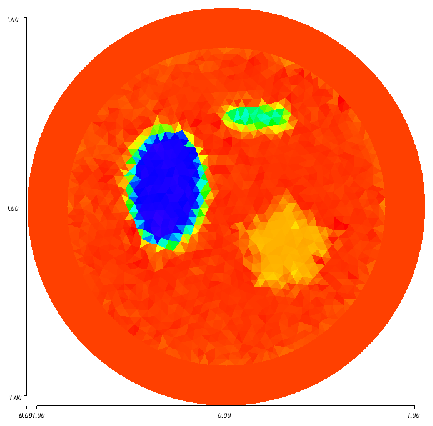}}\qquad{}\qquad{}\qquad{}
\label{fig:reconstr_2d}\subfloat[The coefficient $q^{*}$.]{\includegraphics{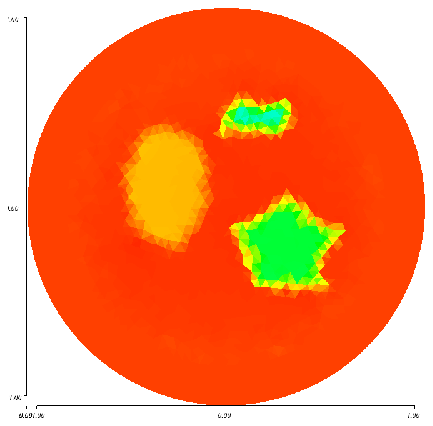}}
\end{figure}

Let $\Omega'=B(0,0.8)$ be the subdomain where the internal energies
are constructed (see Section \ref{sub:Formulation-of-the}). We also
suppose that $a$ and $q$ are known in $\Omega\setminus\Omega'$.
In view of Theorem \ref{thm:proper_2d}, we choose the illuminations
$x_{1}+2$ and $x_{2}+2$ and $K=\{1,3,7\}$. In \cite{cap2011},
the boundary conditions $1,x_{1},x_{2}$ and the same $K$ were chosen:
these illuminations satisfy the hypotheses of Theorem \ref{thm:proper_2d}.

Let $a^{*}$ and $q^{*}$ denote the approximated coefficients. We
first reconstruct $G=a^{*}/q^{*}$ in $x\in\Omega'$ by means of the
formula (\ref{eq:formula for a/q})
\[
\frac{a^{*}}{q^{*}}(x)=2\,\frac{\tr(e_{k})\,\tr(E_{k})-\tr(e_{k}E_{k})}{\left|\nabla(e_{k}/\tr(e_{k}))\right|_{2}^{2}\,\tr(e_{k})^{2}}(x),
\]
averaging over all the $k\in K$ such that the denominator of the
right hand side is bigger than $10^{-2}$. Since for every $x\in\Omega'$
the set of such $k$ is not empty, the chosen set of measurements
turns out to be proper in $\Omega'$.

Then, we use Theorem \ref{thm:formulaq} to image $\log q^{*}\in H_{0}^{1}(\Omega')$:
\[
-\div\left(G\,\tr(e)\,\nabla(\log q^{*})\right)=-\div\left(G\,\nabla\left(\tr(e)\right)\right)+2\sum_{k\in K}\sum_{i=1}^{2}\left(E_{k}^{ii}-k\, e_{k}^{ii}\right)\quad\text{in }\Omega'.
\]
Finally, $a^{*}$ is given by $a^{*}=Gq^{*},$ which, in absence of
noise, gives a good approximation. The reconstructed coefficients
are shown in Figure \ref{fig:reconstr_2d}. 

In Table \ref{tab:Comparison-between-numerical} we compare these
findings with the numerical experiments performed in \cite{cap2011}.
Even if the non-availability of the cross-frequency data makes the
number of measurable internal energies much smaller (about one seventh),
the $L^{2}$ norms of the errors $a-a^{*}$ and $q-q^{*}$ in this
work are about half the corresponding norms obtained in \cite{cap2011}.
\begin{table}
\caption{\label{tab:Comparison-between-numerical}A comparison between the
numerical experiments carried out in \cite{cap2011} and in this work.}
\begin{tabular}{|c|>{\centering}m{3.8cm}|>{\centering}m{4cm}|}
\cline{2-3} 
\multicolumn{1}{c|}{} & Numerical experiments \linebreak in \cite{cap2011} & Numerical experiments\linebreak in this work\tabularnewline
\hline 
Illuminations & $1,x_{1},x_{2}$ & $x_{1}+2,x_{2}+2$\tabularnewline
\hline 
Frequencies $K$ & $1,3,7$ & $1,3,7$\tabularnewline
\hline 
Number of energies & 81 & 12\tabularnewline
\hline 
$\left\Vert a-a^{*}\right\Vert _{2}$ & $3.5\cdot10^{-1}$ & $1.6\cdot10^{-1}$\tabularnewline
\hline 
$\left\Vert q-q^{*}\right\Vert _{2}$ & $1.5\cdot10^{-1}$ & $0.8\cdot10^{-1}$\tabularnewline
\hline 
\end{tabular}
\end{table}

\subsubsection{\label{subsub:Three-Dimensional-Example}Three-Dimensional Example}

Take $\Omega=B(0,1)$ and $\Omega'=B(0,0.8)$. We use a mesh with
about 30000 tetrahedra and 6000 vertices. For simplicity, we choose
$a=1$ and $q=1+0.8\,\chi_{B(0,0.3)}$ and are interested in imaging
the electric permittivity $q$. In view of Theorem \ref{thm:proper_3D},
we choose the set of measurements $\left\{ 1,3,7\right\} \times\{x_{1}+2,x_{2}+2\}$,
which a posteriori turns out to be proper in $\Omega'$. The same
reconstruction procedure described in the two-dimensional example
is used. The reconstruction error is $\left\Vert q-q^{*}\right\Vert _{2}\approx1.3\cdot10^{-1}$.
A comparison between the reference and the reconstructed parameters
is shown in Figure \ref{fig:3D}.
\begin{figure}[t]
\caption{A section of the electric permittivity $q$ in $\Omega'$.}
\subfloat[The reference parameter.]{\includegraphics{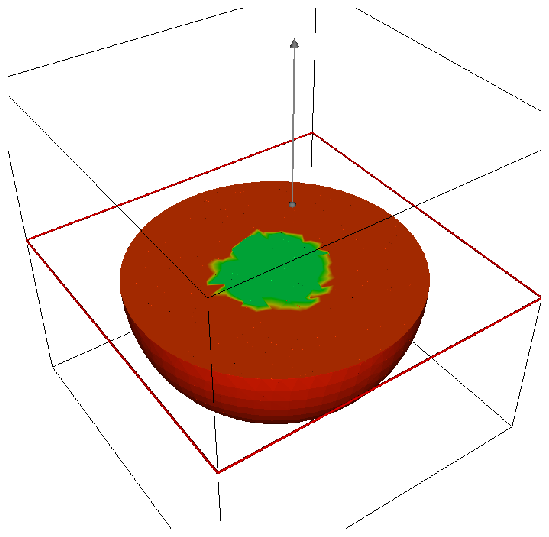}}\qquad{}\qquad{}\label{fig:3D}\subfloat[The reconstructed parameter.]{\includegraphics{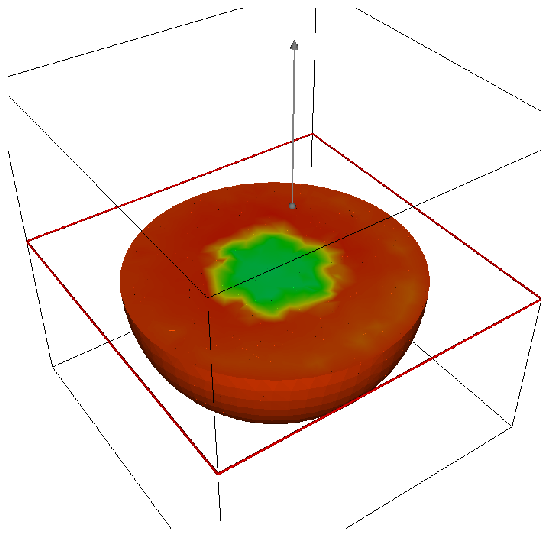}}
\end{figure}

\section*{Acknowledgments}

This work has been done during my D.Phil. at the Oxford Centre for
Nonlinear PDE under the supervision of Yves Capdeboscq, which I would
like to thank for many stimulating discussions on the topic and for
a thorough revision of the paper. I am supported by an EPSRC Research
Studentship and by the EPSRC Science \& Innovation Award to the Oxford
Centre for Nonlinear PDE (EP/EO35027/1). I would like to thank the
referees, whose comments and suggestions greatly improved the quality
of this paper.

\bibliographystyle{plain}
\bibliography{biblio}

\end{document}